\newtheorem{df}{Definition}
\newtheorem{lemma}{Lemma}
\newtheorem{proposition}{Proposition}
\newtheorem{corollary}{Corollary}
\newtheorem{remark}{Remark}
\newtheorem{theorem}{Theorem}
\theoremstyle{definition}
\newtheorem{example}{Example}
\newtheorem{problem}{Problem}
\newcommand{\dpifactor}{\dfrac{1}{\sqrt{2\pi}}}
\newcommand{\pifactor}{\frac{1}{\sqrt{2\pi}}}
\newcommand{\dsum}{\displaystyle{\sum}}
\newcommand{\R}{
\mathbb{R}}
\newcommand{\Z}{
\mathbb{Z}}
\newcommand{\F}{
\mathcal{F}}
\newcommand{\supp}{
\textnormal{supp}}
\newcommand{\T}{
\mathbb{T}}
\newcommand{\X}{
\mathcal{X}}
\newcommand{\Y}{\mathcal{Y}}
\newcommand{\HH}{\mathcal{H}}
 \newcommand{\ceiling}[1]{
 \lceil#1\rceil}
\newcommand{\psihat}{
\widehat{\psi}}
\newcommand{\bracket}[1]{
\left\langle#1\right\rangle}
\newcommand{\sinc}{\textnormal{sinc}}
\newcommand{\finsum}[3]{
\underset{#1=#2}{\overset{#3}\sum}}
\newcommand{\inflim}[1]{\underset{#1\to\infty}\lim}
\newcommand{\dzsum}[1]{\underset{#1\in\Z}\dsum}
\title[Quasi Shift-Invariant Spaces]{On the Structure and Interpolation Properties of Quasi Shift-invariant Spaces}
\author{Keaton Hamm}
\address{Department of Mathematics, Vanderbilt University, Nashville, TN 37240}
\email{keaton.hamm@vanderbilt.edu}
\author{Jeff Ledford}
\address{Department of Mathematics and Applied Mathematics, Virginia Commonwealth University\\Richmond, VA 23284}
\email{jpledford@vcu.edu}
\subjclass[2010]{46E20, 41A05, 41A30, 42B35, 41A63, 42C15}
  \keywords{Shift-invariant Space, Quasi Shift-invariant Space, Nonuniform Sampling, Interpolation}
\begin{document}

\maketitle
\allowdisplaybreaks

\begin{abstract}
The structure of certain types of quasi shift-invariant spaces, which take the form $V(\psi,\X):=\overline{\text{span}}^{L_2}\{\psi(\cdot-x_j):j\in\Z\}$ for an infinite discrete set $\X=(x_j)\subset\R$ is investigated.  Additionally, the relation is explored between pairs $(\psi,\X)$ and $(\phi,\Y)$ such that interpolation of functions in $V(\psi,\X)$ via interpolants in $V(\phi,\Y)$ solely from the samples of the original function is possible and stable.  Some conditions are given for which the sampling problem is stable, and for which recovery of functions from their interpolants from a family of spaces $V(\phi_\alpha,\Y)$ is possible.
\end{abstract}

%%%%%%%%%%%%%%%%%%%%%%%%%%%%
%%%%%%%%%%%%%%%%%%%%%%%%%%%%
\section{Introduction}
%%%%%%%%%%%%%%%%%%%%%%%%%%%%
%%%%%%%%%%%%%%%%%%%%%%%%%%%%

At the heart of modern signal processing and sampling theory are two fundamental questions: when is sampling a class of functions at a given point-set stable? And if the sampling is stable, how may functions in the given class be reconstructed from their samples?  While these questions may be based around applications and are indeed of import in engineering disciplines, they lead quite quickly to some deep theoretical and structural mathematical problems.  The origin of classical sampling theory lies in the observation that functions in the Paley--Wiener space of bandlimited functions in $L_2(\R)$ whose Fourier transforms are supported in the torus $\T$ can be recovered both in $L_2$ and uniformly on $\R$ by a cardinal sine series:
$$f(x) = \underset{j\in\Z}\dsum f(j)\,\sinc(x-j).$$  This is equivalent to the observation that the exponential system $(e^{-ij\cdot})_{j\in\Z}$ is an orthonormal basis for $L_2(\T)$.

However, much literature in the modern era has been devoted to breaking away from the assumption that functions are bandlimited (which implies, in particular, that the functions are analytic).  Consequently, many interesting function spaces have been discussed -- Wiener amalgam spaces, modulation spaces, spaces of finite rate of innovation, shift-invariant spaces, Sobolev spaces, and Triebel--Lizorkin spaces to name a few.  

In particular, shift-invariant spaces have been popularized in many areas, including functional analysis, harmonic analysis, and approximation theory (in the latter field, they have been well-studied specifically in the setting of radial basis function approximations \cite{DeBoorDeVoreRon,DeBoorDeVoreRon2, DeBoorRon, BuhmannRon,Johnson, Johnson2,K}).  A shift-invariant space has the form $V(\psi):=\overline{\text{span}}\{\psi(\cdot-j):j\in\Z\}$, where the closure is taken in $L_p(\R)$ for some $p\in[1,\infty]$ (see, for example, \cite{Bownik,Jia}).  As a matter of terminology, we call $\psi$ the {\em generator} of the space (note it is sometimes called the window or kernel).  The objects of study in this paper are \emph{quasi shift-invariant spaces}
\[
V(\psi,\X):=\overline{\text{span}}\{\psi(\cdot-x_j):j\in\Z\},
\]
where the closure is taken in $L_2$.  Quasi shift-invariant spaces in this form with $\X$ being infinite have been considered in \cite{Atreas,GS}, and are also implicitly considered in \cite{Sun}.  Recently, stability results for Riesz bases in such spaces were considered in \cite{DV}.  

The purpose of this article is twofold: to determine how the structure of quasi shift-invariant spaces compares to the structure of shift-invariant spaces under the same assumptions on the generators; and secondly, to discuss the recovery of functions in quasi shift-invariant spaces via their interpolants from other quasi shift-invariant spaces.  Motivated by the study of shift-invariant spaces, we consider the following problems.

\begin{problem}\label{PROB0}
Under standard assumptions on the generators $\psi$, and discrete translation sequences $\X\subset\R$, what is the structure of $V(\psi,\X)$?  In particular, is the space a closed subspace of $L_2$?  Do the translates of $\psi$ form a natural basis for the space?
\end{problem}

\begin{problem}\label{PROB}
\begin{enumerate}[a)]
\item Under what conditions on $\psi, \phi, \X,$ and $\Y$ is interpolation of functions in $V(\psi,\X)$ via interpolants in $V(\phi,\Y)$ possible, and uniquely determined?
\item Under what conditions may $f\in V(\psi,\X)$ be recovered from its interpolants in a family of quasi shift-invariant spaces $(V(\phi_\alpha,\Y))_{\alpha\in A}$?
\end{enumerate} 
\end{problem}

Answers to the first problem in the shift-invariant case (i.e. $\X=\Z$) are driven by the fact that $V(\sinc,\Z)=PW_\pi$, which of course is a closed subspace of $L_2(\R)$, and the integer translates of $\sinc$ form an orthonormal basis for $PW_\pi$. The second problem is one of scattered-data interpolation and has been studied by several authors.  For instance, Dyn and Michelli \cite{DM} provide results for finite multivariate interpolation using conditionally positive definite functions, while Jetter and St\"{o}ckler \cite{JetterStockler} examine irregular sampling where one of the spaces is a shift-invariant spline space.  Similar considerations, including reconstruction algorithms may be found in \cite{Aldroubi,AFei}.  More recently, Atreas \cite{Atreas} considers spaces $V(\psi,\X)$ under essentially the same assumptions made below ((A1) and (A2) in Section \ref{sect_psi}) and gives a reconstruction formula in the spirit of the classical Whittaker--Kotel'nikov--Shannon (WKS) sampling theorem; some results in this article may be viewed as approximate reconstruction methods in the same vein.  Radial basis functions (RBFs) have been employed to attack this problem as well, \cite{Narcowich, NarcowichWard}.  The literature on RBF interpolation is vast, and the reader is encouraged to consult \cite{Buhmann_Book} and \cite{wendland} for a more general discussion of this problem.

These works, and the results of \cite{L1}, which solve Problem \ref{PROB} in the special case when $\psi=\sinc$ and the exponentials $(e^{-ix_j\cdot})_{j\in\Z}$ are a Riesz basis for $L_2(\T)$, form the inspiration for our study of this problem. More precisely, Theorems 1 and 2 in \cite{L1} give conditions on families of generators $(\phi_\alpha)$ which allow for recovery of $f\in PW_\pi=V(\sinc,\Z)$ via their interpolants in $V(\phi_\alpha,\Y)$.

Such interpolation schemes have their origin in the work of I. J. Schoenberg on cardinal interpolation via splines \cite{Schoenberg}, which are related to summability methods of the cardinal sine series appearing in the aforementioned WKS sampling theorem.

The rest of the paper is organized as follows. In the following section, we give some preliminary definitions and discuss some of the function spaces to be studied in the sequel.  Section \ref{sect_psi} introduces quasi shift-invariant spaces and gives some indication of their structure as desired by Problem \ref{PROB0}.  Section \ref{sect_interp} begins the study of interpolation between quasi shift-invariant spaces and gives an answer to Problem \ref{PROB}, part a (Theorem \ref{thm_interp_1}), while Section \ref{sect_recovery} provides conditions on families of interpolating generators giving an answer to part b (Theorem \ref{thm_recovery_1} and Corollary \ref{cor_recovery_4}).  The subsequent section provides several types of examples to illustrate the convergence phenomenon described previously, and it turns out that the support of the function $\widehat\psi$ plays an important role in the structure of the examples. Section \ref{sect_cardinal} discusses some extensions to cardinal functions which are classical objects of study in interpolation theory, and we conclude with brief sections on inverse theorems and remarks.

%%%%%%%%%%%%%%%%%%%%%%%%%%%%
%%%%%%%%%%%%%%%%%%%%%%%%%%%%
\section{Preliminaries}
%%%%%%%%%%%%%%%%%%%%%%%%%%%
%%%%%%%%%%%%%%%%%%%%%%%%%%%

Denote by $L_p(\Omega)$ and $\ell_p(I)$ the typical spaces of $p$--integrable functions over a measurable set $\Omega\subset\R$ and $p$--summable sequences indexed by the set $I$, respectively, with their usual norms.  By $L_p$ with no set specified we mean $L_p(\R)$, and likewise $\ell_p:=\ell_p(\Z)$.  For convenience, let $\ell_p':=\ell_p(\Z\setminus\{0\})$.  Additionally, let $C(\Omega)$ be the space of continuous functions on $\Omega$ and $C_0(\R)$ the subset of continuous functions on $\R$ vanishing at infinity.

For functions $f\in L_1$, we will use the following normalization for the Fourier transform:
$$\widehat{f}(\xi):= \dpifactor\int_\R f(x)e^{-i\xi x}dx,$$
whereby the Fourier transform can be extended to a linear isometry on $L_2$.  If $\widehat{f}\in L_1$ and $f$ is continuous, then the inversion formula is $f(x) = \pifactor\int_\R\widehat{f}(\xi)e^{ix\xi}d\xi.$  Plancherel's Identity thus states that $\|\widehat{f}\|_{L_2}=\|f\|_{L_2}.$  We will use $\T$ to denote the torus, which may be identified with the interval $[-\pi,\pi)$.  Unless otherwise specified, $\bracket{\cdot,\cdot}$ is to be taken as the inner product on $L_2(\T)$.

Throughout, $C$ will denote a constant, which may change from line to line depending on context, and subscripts will denote dependence upon a given parameter.  Additionally, the statement $\|\cdot\|_1\asymp\|\cdot\|_2$ will mean that there are constants $c_1$ and $c_2$ such that 
$$c_1\|\cdot\|_1\leq\|\cdot\|_2\leq c_2\|\cdot\|_1.$$

\subsection{Function Spaces}
Throughout the sequel, we will be concerned with many function spaces which arise in applications of harmonic analysis (specifically as different models of the structure of signals one wishes to analyze), but which also enjoy use in other areas of functional analysis.  The first is the classical Paley--Wiener space of {\em bandlimited} functions.  Given $\sigma>0$, let
$$PW_\sigma:=\{f\in L_2: \widehat{f}=0 \textnormal{ a.e. outside of } [-\sigma,\sigma]\},$$
endowed with the norm on $L_2(\R)$.  Since $PW_\sigma$ is isometrically isomorphic to $PW_\gamma$ for any parameters $\sigma$ and $\gamma$ via the map $J_{\sigma\gamma}:PW_\sigma\to PW_\gamma$, $f(x)\mapsto \sqrt{\gamma/\sigma}f(\gamma x/\sigma)$, in the sequel we limit our considerations to the canonical space $PW_\pi$.  Naturally, the Fourier transform is an isometric isomorphism from $PW_\pi$ to $L_2(\T)$.

Of additional utility to our analysis are various {\em Wiener amalgam spaces}.  For $1\leq p,q\leq\infty$, these spaces are defined by
$$W(L_p,\ell_q):=\left\{f : \left(\sum_{j\in\Z}\|f(\cdot+2\pi j)\|_{L_p(\T)}^q\right)^\frac{1}{q}<\infty\right\} ,$$ with the suitable modification when $q$ is infinite. We denote the norm implicit in the definition above via $\|f\|_{W(L_p,\ell_q)}$.  These spaces may also be identified as the $\ell_q$ sum of Banach spaces: $(\oplus_{k\in\Z}L_p(\T+2\pi k))_q$, which is isometrically isomorphic to $\ell_q(L_p(\T))$ via the obvious map.  Note that this readily implies that the amalgam spaces are Banach spaces.  For the special case that is most heavily considered in the sequel, we reduce the notation to $W:=W(L_\infty,\ell_1)$, which is sometimes called Wiener's space.  \begin{comment}We also make use of us amalgam spaces in the Fourier domain; of primary interest will be the space $$\F W:=\F W(L_\infty,\ell_1):=\{f:\widehat{f}\in W(L_\infty,\ell_1)\},$$ which is endowed with the norm $\|f\|_{\F W}:=\|\widehat{f}\|_{W}.$\end{comment}
Note that these amalgam spaces capture both local and global behavior of the functions simultaneously.  Loosely, functions in $W(L_p,\ell_q)$ are locally in $L_p$ and globally in $\ell_q$.  These spaces, first considered by Wiener, have found great utility in harmonic analysis (see the excellent survey \cite{Heil} and its many references, as well as \cite{Fei,GHO}).  As the sequel will make use of the Fourier transforms of functions in the amalgam spaces, it will be useful to adopt the convention that $\F V$ is the set of Fourier transforms of elements of $V$ provided $V\subset L_2$.

We will not enumerate all of the properties of these amalgam spaces, but let us collect some facts which will be useful later on. 

\begin{proposition}\label{PROPAmalgamBasic}
\begin{enumerate}[(i)]
\item If $1\leq p\leq q\leq\infty$ and $1\leq r\leq\infty$, then $W(L_r,\ell_p)\subset W(L_r,\ell_q)$, and $\|\cdot\|_{W(L_r,\ell_q)}\leq\|\cdot\|_{W(L_r,\ell_p)}$.
\item If $1\leq r\leq\infty$, and $1\leq p\leq q\leq\infty$, then $W(L_q,\ell_r)\subset W(L_p,\ell_r)$, and $\|\cdot\|_{W(L_p,\ell_r)}\leq(2\pi)^{\frac1p-\frac1q}\|\cdot\|_{W(L_q,\ell_r)}$.
\item $W(L_p,\ell_p)=L_p$.
\item $W(L_\infty,\ell_1)\subset L_1\cap L_2.$
\item $\F W(L_\infty,\ell_1)\subset C_0\cap L_2.$
\item If $f,g\in W(L_\infty,\ell_1)$ then $fg\in W(L_\infty,\ell_1)$.
\end{enumerate}
\end{proposition}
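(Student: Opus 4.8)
The plan is to establish the six assertions in the order given, since parts (iv)--(vi) are quick consequences of (i)--(iii). Parts (i) and (ii) are the two halves of a monotonicity principle: (i) reflects the nesting of the sequence spaces $\ell_p$, while (ii) reflects the nesting of the Lebesgue spaces $L_p$ over the finite-measure set $\T$. For (i), I would fix $f$ and set $a_j:=\|f(\cdot+2\pi j)\|_{L_r(\T)}$, so that $\|f\|_{W(L_r,\ell_q)}=\|a\|_{\ell_q}$ and $\|f\|_{W(L_r,\ell_p)}=\|a\|_{\ell_p}$; the claim then reduces to the standard embedding $\|a\|_{\ell_q}\le\|a\|_{\ell_p}$ valid for $p\le q$. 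For (ii), I would apply H\"older's inequality on $\T$, which has measure $2\pi$: for $p\le q$ and each fixed $h=f(\cdot+2\pi j)$ one has $\|h\|_{L_p(\T)}\le(2\pi)^{\frac1p-\frac1q}\|h\|_{L_q(\T)}$. Taking the $\ell_r$ norm over $j$ and pulling the constant outside yields the stated inequality.

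For (iii), I would specialize to $p=q$ and partition the line as $\R=\bigsqcup_{j\in\Z}[2\pi j-\pi,2\pi j+\pi)$; then $\sum_{j}\int_{\T}|f(x+2\pi j)|^p\,dx=\int_\R|f|^p$, so $\|f\|_{W(L_p,\ell_p)}=\|f\|_{L_p}$, with the case $p=\infty$ handled by replacing the sum and the integral over $\T$ by suprema and essential suprema. Parts (iv)--(vi) then follow quickly. For (iv), the inclusion $W(L_\infty,\ell_1)\subset L_1$ follows from (ii) (with $p=1$, $q=\infty$) together with (iii), while $W(L_\infty,\ell_1)\subset L_2$ follows by chaining (ii) (to pass from $L_\infty$ to $L_2$ locally) with (i) (to pass from $\ell_1$ to $\ell_2$ globally) and then invoking (iii) to identify $W(L_2,\ell_2)=L_2$. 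For (v), any $g\in W(L_\infty,\ell_1)$ lies in $L_1\cap L_2$ by (iv), so the $L_1$- and $L_2$-theories of the Fourier transform coincide; hence $\widehat g\in C_0$ by Riemann--Lebesgue and $\widehat g\in L_2$ by Plancherel, giving $\widehat g\in C_0\cap L_2$. Finally, for (vi) I would use submultiplicativity on each fundamental domain, $\|fg\|_{L_\infty(\T+2\pi j)}\le\|f\|_{L_\infty(\T+2\pi j)}\,\|g\|_{L_\infty(\T+2\pi j)}$, and then the pairing estimate $\sum_j a_jb_j\le\|a\|_{\ell_\infty}\|b\|_{\ell_1}\le\|a\|_{\ell_1}\|b\|_{\ell_1}$, which yields $\|fg\|_{W}\le\|f\|_{W}\|g\|_{W}$.

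None of the six steps presents a genuine obstacle; the only point demanding care is the bookkeeping of the endpoint cases $p=\infty$ or $q=\infty$ in (i)--(iii), where the relevant $\ell_p$ and $L_p$ norms, and the defining sum itself, must be read as suprema or essential suprema rather than as genuine sums and integrals. I would handle these uniformly by recording the convention once at the outset and checking that each inequality above degenerates correctly in the limit.
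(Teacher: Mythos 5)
Your proposal is correct and follows essentially the same route as the paper's own (very terse) proof: (i) from the nesting of $\ell_p$ spaces, (ii) from H\"older on $\T$ with the constant $(2\pi)^{\frac1p-\frac1q}$, (iii) from the definition of the norm, (iv) by combining (i)--(iii), (v) from (iv) plus Riemann--Lebesgue (and Plancherel), and (vi) from the submultiplicativity estimate showing $\ell_1$ is closed under pointwise products. You simply supply the bookkeeping details the paper leaves implicit.
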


\begin{proof}
Note that \textit{(i)} follows from the inclusion $\ell_p\subset\ell_q$, and the fact that the $\ell_q$ norm is subordinate to the $\ell_p$ norm, whilst \textit{(ii)} follows from the facts that $L_q(\T)\subset L_p(\T)$ and $\|\cdot\|_{L_p(\T)}\leq(2\pi)^{\frac1p-\frac1q}\|\cdot\|_{L_q(\T)}.$

Part \textit{(iii)} is evident by the definition of the norm on $W(L_p,\ell_p)$, and \textit{(iv)} follows from combining \textit{(i)}, \textit{(ii)} and \textit{(iii)}.  Finally, \textit{(v)} arises from \textit{(iv)} and the Riemann--Lebesgue Lemma, and \textit{(vi)} follows from the fact that $\ell_1$ is closed under multiplication.
\end{proof}

For a separable Hilbert space $\HH$, let $\mathcal{B}(\HH)$ be the space of bounded linear operators from $\HH$ into itself, and recall that the {\em strong operator topology} (SOT) on $\mathcal{B}(\HH)$ is the topology of pointwise convergence.  That is, $(T_n)\subset\mathcal{B}(\HH)$ converges in SOT to $T\in\mathcal{B}(\HH)$ provided $\|T_nh-Th\|_{\HH}\to0$ for every $h\in\HH$.  When the context is clear, we simply use $\|\cdot\|$ to denote the typical operator norm for elements of $\mathcal{B}(\HH)$.

\subsection{Riesz Bases and Complete Interpolating Sequences}
Two important objects for our subsequent study will be Riesz bases for Hilbert spaces and {\em complete interpolating sequences} for Paley--Wiener spaces.  These objects are defined in rather different ways, but  are nonetheless intimately related.

\begin{df}\label{DEFRieszBasis}
Let $\HH$ be a separable, infinite dimensional Hilbert space.  A family $(h_j)_{j\in\Z}\subset\HH$ is a {\em Riesz basis} for $\HH$ provided it is complete and there exists a constant $C\geq1$ (called the Riesz basis constant) such that for every finite sequence of scalars $(a_j)$,
\begin{equation}\label{EQRB}\frac{1}{C}\|a\|_{\ell_2}\leq\left\|\sum_{j}a_jh_j\right\|_{\HH}\leq C\|a\|_{\ell_2}.\end{equation}
\end{df}
Equivalently, a Riesz basis is a bounded unconditional basis, or is the image of an orthonormal basis under an invertible bounded linear operator, or is an exact, tight frame \cite{Ole,young}.  Moreover, if $(h_j)$ is a Riesz basis for $\HH$, then each $f\in\HH$ admits a unique representation of the form $f=\sum_{j\in\Z}c_jh_j$ with $(c_j)\in\ell_2$.

\begin{df}\label{DEFCIS}
A sequence $\X:=(x_j)_{j\in\Z}\subset\R$ is a {\em complete interpolating sequence} (CIS) for $PW_\pi$ provided for every $(c_j)_{j\in\Z}\in\ell_2$, there exists a unique $f\in PW_\pi$ satisfying
\begin{equation}\label{EQMoment}
f(x_j)=c_j,\quad j\in\Z.
\end{equation}
\end{df}
Finding a solution to \eqref{EQMoment} is called the {\em moment problem} \cite{young}.  It turns out that these two concepts are closely related as the following theorem shows. 
\begin{theorem}[\cite{young}, Theorem 9, p. 143]\label{THMRBCIS}
A sequence $\X$ is a CIS for $PW_\pi$ if and only if $(e^{-ix_j\cdot})_{j\in\Z}$ is a Riesz basis for $L_2(\T)$.
\end{theorem}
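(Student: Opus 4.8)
The plan is to transport the whole question to $L_2(\T)$ via the Fourier transform and then recognize the complete interpolating sequence property as the statement that a certain analysis operator is a bounded bijection, which is dual to the synthesis operator whose invertibility is exactly the Riesz basis property.

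First I would exploit that $\F\colon PW_\pi\to L_2(\T)$ is an isometric isomorphism. For $f\in PW_\pi$ the inversion formula gives, writing $e_j:=e^{-ix_j\cdot}$,
$$f(x_j)=\pifactor\int_{-\pi}^{\pi}\widehat f(\xi)e^{ix_j\xi}\,d\xi=\pifactor\bracket{\widehat f,e_j}.$$
Thus sampling $f$ at $\X$ corresponds, up to the constant $(2\pi)^{-1/2}$, to forming the moment sequence $(\bracket{g,e_j})_j$ of $g=\widehat f\in L_2(\T)$. After this unitary identification the sampling map $f\mapsto(f(x_j))_j$ becomes the analysis operator $Ug:=(\bracket{g,e_j})_j$ of the exponential system $(e_j)$, and $\X$ is a CIS precisely when $U$ is a bijection onto $\ell_2$: existence of an interpolant for every $\ell_2$ datum is surjectivity of $U$, and uniqueness is injectivity of $U$ (equivalently completeness of $(e_j)$).

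Next I would record the duality that does the real work: the analysis operator $U$ is the adjoint of the synthesis operator $T\colon\ell_2\to L_2(\T)$, $Ta=\sum_j a_je_j$, since $\bracket{Ta,g}=\bracket{a,Ug}_{\ell_2}$. By Definition \ref{DEFRieszBasis}, $(e_j)$ is a Riesz basis if and only if $T$ is a bounded bijection: the two inequalities in \eqref{EQRB} say exactly that $T$ is bounded and bounded below (hence injective with closed range), and completeness makes the range dense, thus all of $L_2(\T)$. Since a bounded operator is boundedly invertible exactly when its adjoint is, $T$ is a bounded bijection if and only if $U=T^*$ is, and this is the desired equivalence. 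The forward implication is the clean one: given a Riesz basis $(e_j)$ with biorthogonal (dual) Riesz basis $(\tilde e_j)$, set $g=\sqrt{2\pi}\sum_j c_j\tilde e_j$ for $(c_j)\in\ell_2$ and check that $\bracket{g,e_k}/\sqrt{2\pi}=c_k$, with uniqueness from completeness; pulling back by $\F^{-1}$ produces the interpolant in $PW_\pi$.

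The converse, CIS $\Rightarrow$ Riesz basis, is where I expect the main obstacle, because the definition supplies only qualitative solvability and uniqueness, not the quantitative two-sided bounds of \eqref{EQRB}. Here I would let $S\colon\ell_2\to L_2(\T)$ send an $\ell_2$ datum to the unique solution of the moment problem and argue that $S$ is bounded by the closed graph theorem: if $c^{(n)}\to c$ in $\ell_2$ and $Sc^{(n)}\to g$ in $L_2(\T)$, then each coordinate $\bracket{Sc^{(n)},e_j}=c^{(n)}_j$ passes to the limit to give $\bracket{g,e_j}=c_j$, so $g=Sc$ by uniqueness. One then has $US=\mathrm{id}_{\ell_2}$, whence $(e_j)$ has the biorthogonal system $\tilde e_j:=S\delta_j$ (so it is minimal), and the identity $a=S^*\big(\sum_j a_je_j\big)$ yields the lower bound $\|a\|_{\ell_2}\le\|S\|\,\big\|\sum_j a_je_j\big\|$. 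The delicate remaining point is the upper (Bessel) bound, i.e. that $U$ maps all of $L_2(\T)$ boundedly into $\ell_2$; this is exactly the assertion that $S$ is onto, so that $U=S^{-1}$ is bounded by the bounded inverse theorem. Securing surjectivity of $S$ — equivalently completeness of the dual system $(\tilde e_j)$, equivalently a Plancherel--P\'olya-type estimate $\sum_j|f(x_j)|^2\lesssim\|f\|_{L_2}^2$ on all of $PW_\pi$ — is the crux, and I would obtain it from the hypothesis that sampling is a genuine bijection of $PW_\pi$ onto $\ell_2$, after which the open mapping theorem closes the argument and \eqref{EQRB} follows for $(e_j)$.
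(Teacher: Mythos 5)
Your forward implication (Riesz basis $\Rightarrow$ CIS) via the dual Riesz basis is correct, and your reduction of sampling to the moment problem for the exponentials is the right first move; note that the paper itself gives no proof of this theorem (it is quoted from Young's book), so the relevant comparison is with the classical proof there. The converse, however, has a genuine gap at exactly the point you call ``the crux.'' Definition \ref{DEFCIS} asserts only that for each $(c_j)\in\ell_2$ there is a \emph{unique} $f\in PW_\pi$ with $f(x_j)=c_j$; it does \emph{not} assert that every $f\in PW_\pi$ has square-summable samples. Hence there is no ``hypothesis that sampling is a genuine bijection of $PW_\pi$ onto $\ell_2$'' available to you: the hypothesis only makes the sampling map a bijection between $\mathrm{ran}(S)=\{f\in PW_\pi:(f(x_j))_j\in\ell_2\}$ and $\ell_2$, and surjectivity of $S$ --- equivalently the Bessel bound $\sum_j|f(x_j)|^2\leq C\|f\|_{L_2}^2$ on all of $PW_\pi$ --- is precisely what remains unproved. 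Invoking it as a hypothesis is circular, and the open mapping theorem cannot manufacture it.

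Moreover, this gap cannot be closed by the soft functional analysis you are using; it needs analytic input specific to exponentials. Everything you validly derive from unique solvability --- completeness, boundedness of $S$ by the closed graph theorem, a Bessel biorthogonal system, the lower bound $\|a\|_{\ell_2}\leq\|S\|\left\|\sum_j a_je_j\right\|$, even norm-boundedness of the system --- can hold while the conclusion fails. Take a Hilbert space with orthonormal basis $(u_n)_{n\geq0}$ and set $f_n:=u_n+u_0$ for $n\geq1$. This system is complete (if $\bracket{g,f_n}=0$ for all $n$, then $\bracket{g,u_n}=-\bracket{g,u_0}\to0$ forces $g=0$), and for every $(c_n)\in\ell_2$ the element $f=\sum_{n\geq1}c_nu_n$ is the unique solution of $\bracket{f,f_n}=c_n$; yet $\left\|\sum_{n\leq N}N^{-1/2}f_n\right\|^2=N+1$ while the coefficient sequence has norm $1$, so $(f_n)$ is not Bessel and not a Riesz basis --- here the elements with $\ell_2$ moments form the proper subspace $\{f:\bracket{f,u_0}=0\}$, i.e.\ $S$ is not onto. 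What rescues the exponential case, and what the cited proof in \cite{young} actually supplies, is: (a) your lower bound applied to $a=\delta_j-\delta_k$, together with $\|e_j-e_k\|_{L_2(\T)}\leq C|x_j-x_k|$, forces the points $x_j$ to be separated; and (b) the Plancherel--P\'olya inequality, which says that for a separated real sequence one has $\sum_j|f(x_j)|^2\leq C\|f\|_{L_2}^2$ for all $f\in PW_\pi$. Step (b) is the exponential-specific Bessel estimate your argument is missing, and once it is in hand, completeness plus the two bounds give the Riesz basis property exactly as you outline.
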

Note that Definition \ref{DEFCIS} and Theorem \ref{THMRBCIS} provide a natural bijection between $\ell_2$ and $L_2(\T)$ via the map $(c_j)_{j\in\Z}\mapsto \sum_{j\in\Z}c_je^{-ix_j\cdot}$.  

A necessary condition for $\X\subset\R$ to be a CIS for $PW_\pi$ is for it to be quasi-uniform, i.e. there are $0\leq q\leq Q<\infty$ such that $q\leq|x_{j+1}-x_j|\leq Q$ (assuming that the points are ordered so that $x_j<x_{j+1}$ for all $j$).  A sufficient condition may be found in Kadec's 1/4--theorem \cite{kadec}, which states that if $\sup_{j\in\Z}|x_j-j|<1/4$, then $\X$ is a CIS for $PW_\pi$.  Consequently, CISs are available in abundance, and restricting attention to such sequences is not overly strict. Necessary and sufficient conditions were given by Pavlov \cite{pavlov} in terms of zeros of certain types of entire functions and Muckenhoupt weights.

Next, we define two operations, extension and prolongation, which will be used extensively in the subsequent sections.  First, assume that $\X$ is a CIS for $PW_\pi$, and note that on account of Theorem \ref{THMRBCIS}, if $h\in L_2(\T)$, it admits a unique representation $h=\sum_{j\in\Z}c_je^{-ix_j\cdot}$ in $L_2(\T)$.  Therefore, \eqref{EQRB} implies that the extension  $$E_{\X}(h)(t):=\sum_{j\in\Z}c_je^{-ix_jt},\quad t\in\R$$ is locally square-integrable, and thus is well-defined almost everywhere on $\R$.  Second, for every $k\in\Z$, define the \textit{prolongation operator} $A_{\X}^k:L_2(\T)\to L_2(\T)$ associated with the CIS $\X$ via 
$$A_{\X}^k(h)(t):=E_{\X}(h)(t+2\pi k) = \sum_{j\in\Z}c_je^{-ix_j(t+2\pi k)},\quad t\in\T.$$   Note $A_\X^k$ is not merely translation, as it is viewed as an operator mapping into $L_2(\T)$, so is better viewed as an operation on the coefficients.  An application of \eqref{EQRB} implies that $$\|A_{\X}^k\|\leq C_{\X}^2,\quad k\in\Z,$$
where $C_\X$ is the Riesz basis constant for the exponential system associated with $\X$ as in Definition \ref{DEFRieszBasis}.  The same bound holds for the adjoint, $A_{\X}^{\ast k}$.  Consequently, the families $(A_\X^k), (A_\X^{\ast k})$ are uniformly bounded subsets of $\mathcal{B}(L_2(\T))$.

In the sequel, we will distinguish between prolongation operators for multiple CISs, and if $\mathcal{Y}$ is another CIS, then $E_{\mathcal{Y}}$ and $A_{\mathcal{Y}}^k$ are defined analogously via the expansion of $h$ in terms of the Riesz basis $(e^{-iy_j\cdot})_{j\in\Z}$.

Finally, each Riesz basis has with it an associated dual Riesz basis.  Indeed, if $(\widetilde{e_j})_{j\in\Z}$ are the coordinate functionals associated with the basis $(e^{-ix_j\cdot})_{j\in\Z}$, i.e. the dual elements such that $\bracket{\widetilde{e_j},e^{-ix_k\cdot}}=\delta_{j,k}$, then any $h\in L_2(\T)$ admits a unique representation in each basis as follows:

$$h=\sum_{j\in\Z}\bracket{h,e^{-ix_j\cdot}}\widetilde{e_j} = \sum_{j\in\Z}\bracket{h,\widetilde{e_j}}e^{-ix_j\cdot}.$$
These coordinate functionals are the dual Riesz basis.  It should be noted that the functions $\widetilde{e_j}$ need not be continuous in general, though if $x_j=j$ for every $j$, then evidently $\widetilde{e_j}=e^{-ij\cdot}$.

Our use of the prolongation and extension operators above stem from Lyubarskii and Madych \cite{LMSpline}, who additionally provide a pleasant extension of the classical Poisson Summation Formula to sequences which are CISs for $PW_\pi$ \cite{LMCIS}.

%%%%%%%%%%%%%%%%%%%%%%%%%%%%%
%%%%%%%%%%%%%%%%%%%%%%%%%%%%%
\section{Quasi Shift-Invariant Spaces and their Structure}\label{sect_psi}
%%%%%%%%%%%%%%%%%%%%%%%%%%%%%
%%%%%%%%%%%%%%%%%%%%%%%%%%%%%

Let us now introduce the primary object of the subsequent study: the so-called {\em quasi shift-invariant spaces} (so-named in \cite{GS}).  For the moment, suppose that $\X:=(x_j)_{j\in\Z}\subset\R$ is a discrete sequence, i.e. $\inf_{j\neq k}|x_j-x_k|>0$, and that $\psi\in L_2$.  Then define
$$V(\psi,\X):=\overline{\text{span}}\{\psi(\cdot-x_j):j\in\Z\},$$ where the closure is taken in $L_2(\R)$.  In the special case $\X=\Z$, which is one of the primary motivations of this study, the space $V(\psi):=V(\psi,\Z)$ is called the {\em principal shift-invariant space} associated with the {\em generator} $\psi$ (which is also commonly called the kernel or window function; our use of the term generator follows from \cite{AG}).  For an excellent survey of the weighted sampling problem in shift-invariant spaces, the reader is invited to consult \cite{AG}.  Let us note that in the case that $\{\psi(\cdot-x_j):j\in\Z\}$ is a Riesz basis for $V(\psi,\X)$, then the space can be described in a manner that is easier to handle in practice.  Namely, \begin{equation}\label{Vtilde}V(\psi,\X) = \left\{\sum_{j\in\Z}c_j\psi(\cdot-x_j):(c_j)\in\ell_2\right\},\end{equation} where convergence of the series is taken to be in $L_2$.  Indeed, in much of the literature, the space is defined by \eqref{Vtilde} (for example, \cite{AG,GS}).

In light of the fact that quasi shift-invariant spaces are generalizations of shift-invariant spaces, it is natural to ask how similar their structure is.  Consequently, there are some natural questions that present themselves here, the first of which is: when is $V(\psi,\X)$ a closed subspace of $L_2$? Secondly, when is $\{\psi(\cdot-x_j):j\in\Z\}$ a Riesz basis for $V(\psi,\X)$?  The third question of interest to us (Problem \ref{PROB}) is that of when the nonuniform sampling problem is well-posed, i.e. when is $f$ uniquely and stably determined by its values $(f(y_j))$ for some discrete $\mathcal{Y}\subset\R$?  This of course requires pointwise evaluation to be well-defined in the space $V(\psi,\X)$.  The answers, as will be demonstrated, are similar to the shift-invariant space case.  

%%%%%%%%%%%%%%%%%%%%%%%%%%%%%%%
\subsection{Regular Generators}
%%%%%%%%%%%%%%%%%%%%%%%%%%%%%%%

We now turn to some regularity conditions on the generator $\psi$ and the translation set $\X$, and elucidate the structure of the associated quasi shift-invariant spaces. Henceforth we assume that $\psi$ enjoys the following properties:
\begin{comment}
\begin{enumerate}
\item[(A1)] $\widehat\psi\in W(L_\infty,\ell_1 )$, and 
\item[(A2)] $\widehat\psi\in C(\T)$ with $\widehat\psi(\xi)\geq 0$ a.e. on $\R$ and $\widehat\psi(\xi)\geq \delta_{\psi} > 0$ on $\T$.
\end{enumerate}
\end{comment}
\begin{enumerate}
    \item[(A1)] $\psi \in L_2(\mathbb{R})$ is positive definite, and 
    \item[(A2)] $\widehat\psi \in W(L_\infty, \ell_1)$ such that
    \[
    C_\psi:= \dfrac{\sum_{j\neq 0}\| \widehat{\psi}(\cdot+2\pi j) \|_{L_\infty(\T)} }{\inf_{\xi\in\T}\vert  \widehat{\psi}(\xi)\vert }<\infty
    \]
\end{enumerate}
For convenience later on, if $\psi$ satisfies (A1) and (A2), then we define
\begin{equation}\label{delta}
\delta_{\psi}:=\inf_{\xi\in\T}\vert  \widehat{\psi}(\xi)\vert,
\end{equation}
which is necessarily positive. 

Note that by Proposition \ref{PROPAmalgamBasic}, (A2) implies that the generator $\psi$ is continuous.  Also, in the case of $V(\psi,\Z)$, properties (A1) and (A2) imply that $\{\psi(\cdot-j):j\in\Z\}$ is a Riesz basis for its closed linear span, and consequently that $V(\psi,\Z)$ is a closed subspace of $L_2$.  Additionally, these assumptions are not overly strict in that they encompass the well-studied sinc kernel, which forms the launching point of our study (and many others, for that matter).  Moreover, the literature on interpolation and approximation of functions by positive definite kernels is extensive, and employs connections with reproducing kernel Hilbert spaces, polynomial splines, and other techniques. See, for example, \cite{NarcowichWard}, and for many references on scattered-data approximation, \cite{wendland}.

For the duration of this work, we will make the assumption that the shift sequence $\X$ is a CIS for $PW_\pi$, or equivalently, via Theorem \ref{THMRBCIS}, that it contains the frequencies of a Riesz basis of exponentials for $L_2(\T)$.  This restriction, while a special case of the general quasi shift-invariant space, comes from the motivation of the interpolation problem found in Theorem 1 of \cite{L1}.   Throughout this section, we will use the equivalent definition in \eqref{Vtilde}, which is justified on account of the following proposition, which is the main structural statement on quasi shift-invariant spaces in this section.
%which will be equivalent, by Proposition \ref{PROPVtilde}, to the original definition of the space once we have shown that (A1) and (A2) imply that $\{\psi(\cdot-x_j):j\in\Z\}$ is a Riesz basis for the space (Proposition \ref{PROPRieszBasis}).

\begin{proposition}\label{lem_psi_1}
If $\X$ is a CIS for $PW_\pi$ and $\psi$ satisfies \emph{(A1)} and \emph{(A2)}, then the following hold:
\begin{enumerate}[(i)]
    \item $\{\psi(\cdot-x_j):j\in\Z\}$ is a Riesz basis for $V(\psi,\X)$, and moreover, 
    \[ \dfrac{\delta_\psi}{C_{\X}^2}\|c\|_{\ell_2}\leq \left\|\sum_{j\in\Z} c_j\psi(\cdot-x_j)\right\|_{L_2(\R)}\leq C_\X^2\|\widehat\psi\|_{W(L_\infty,\ell_1)}\|c\|_{\ell_2};\]
    \item $V(\psi,\X)$ is a closed subspace of $L_2$;
    \item $V(\psi,\X)\subset C_0\cap L_2(\R)$.
    \item For all $f\in V(\psi,\X)$,
    \[
    \| \widehat{f} \|^2_{L_2(\T)}\leq \| f  \|^2_{L_2(\mathbb{R})}\leq \left(1 +C^4_{\X}C^2_{\psi}  \right)\| \widehat{f} \|^2_{L_2(\T)}.
    \]
\end{enumerate}
\end{proposition}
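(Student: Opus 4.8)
The plan is to pass to the Fourier side and exploit the multiplicative structure $\widehat{f}=\psihat\cdot E_\X(h)$. By part (i), any $f\in V(\psi,\X)$ can be written as $f=\sum_{j\in\Z}c_j\psi(\cdot-x_j)$ with $(c_j)\in\ell_2$; applying the Fourier transform (an isometry on $L_2$) together with the translation rule gives $\widehat{f}(\xi)=\psihat(\xi)\sum_{j\in\Z}c_je^{-ix_j\xi}=\psihat(\xi)\,E_\X(h)(\xi)$ almost everywhere, where $h:=\sum_{j\in\Z}c_je^{-ix_j\cdot}\in L_2(\T)$ is the function whose coefficients in the Riesz basis $(e^{-ix_j\cdot})$ are precisely $(c_j)$. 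Restricting to the fundamental period yields $\widehat{f}|_\T=\psihat|_\T\cdot h$, so by (A2) and \eqref{delta},
\[
\|\widehat{f}\|_{L_2(\T)}^2=\int_\T|\psihat(\xi)|^2|h(\xi)|^2\,d\xi\ge\delta_\psi^2\|h\|_{L_2(\T)}^2,
\]
which I will record as $\|h\|_{L_2(\T)}^2\le\delta_\psi^{-2}\|\widehat{f}\|_{L_2(\T)}^2$ for use in the upper bound.

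Next I would periodize. By Plancherel and the decomposition $\R=\bigcup_{k\in\Z}(\T+2\pi k)$,
\[
\|f\|_{L_2(\R)}^2=\|\widehat{f}\|_{L_2(\R)}^2=\sum_{k\in\Z}\int_\T|\widehat{f}(\xi+2\pi k)|^2\,d\xi .
\]
The key identity is that on the $k$-th block $\widehat{f}(\xi+2\pi k)=\psihat(\xi+2\pi k)\,E_\X(h)(\xi+2\pi k)=\psihat(\xi+2\pi k)\,A_\X^k(h)(\xi)$ for $\xi\in\T$, directly from the definition of the prolongation operator. Since the $k=0$ term is exactly $\|\widehat{f}\|_{L_2(\T)}^2$ and every term is nonnegative, discarding all $k\neq0$ terms immediately delivers the lower bound $\|\widehat{f}\|_{L_2(\T)}^2\le\|f\|_{L_2(\R)}^2$.

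For the upper bound I would estimate the tail $k\neq0$. On each block, pulling out the supremum of $\psihat$ and invoking the uniform operator estimate $\|A_\X^k\|\le C_\X^2$ gives
\[
\int_\T|\psihat(\xi+2\pi k)|^2|A_\X^k(h)(\xi)|^2\,d\xi\le\|\psihat(\cdot+2\pi k)\|_{L_\infty(\T)}^2\,C_\X^4\,\|h\|_{L_2(\T)}^2 .
\]
Summing over $k\neq0$ and using the elementary inequality $\sum_{k\neq0}a_k^2\le\bigl(\sum_{k\neq0}a_k\bigr)^2$ for the nonnegative terms $a_k:=\|\psihat(\cdot+2\pi k)\|_{L_\infty(\T)}$ converts the $\ell_2$-type sum into the $\ell_1$-type quantity defining $C_\psi$, producing the factor $(\delta_\psi C_\psi)^2$. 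Combining with $\|h\|_{L_2(\T)}^2\le\delta_\psi^{-2}\|\widehat{f}\|_{L_2(\T)}^2$ cancels $\delta_\psi^2$ and bounds the tail by $C_\X^4C_\psi^2\|\widehat{f}\|_{L_2(\T)}^2$; adding the $k=0$ term yields the constant $1+C_\X^4C_\psi^2$.

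The main obstacle is the rigorous justification of the block identity $\widehat{f}(\cdot+2\pi k)|_\T=\psihat(\cdot+2\pi k)\,A_\X^k(h)$, since the series $\sum_{j\in\Z}c_je^{-ix_j\xi}$ need not converge pointwise and $E_\X(h)$ is only guaranteed locally square-integrable. I would settle this by verifying the identity for finite sums, where it is immediate, and then passing to the limit: the exponential partial sums converge to $h$ in $L_2(\T)$ because $(e^{-ix_j\cdot})$ is a Riesz basis, boundedness of $A_\X^k$ transfers this convergence to each prolongation, and multiplication by the bounded factor $\psihat(\cdot+2\pi k)\in L_\infty(\T)$ preserves it; since the same partial sums converge in $L_2(\R)$ to $\widehat{f}$, the two limits must agree on $\T+2\pi k$. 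Once this periodization identity is secured, the remaining steps are the routine applications of Hölder's inequality and the Riesz-basis bounds described above.
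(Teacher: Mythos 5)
Your treatment of part (iv) is correct and follows essentially the same route as the paper: periodization over the blocks $\T+2\pi k$, the block identity $\widehat f(\cdot+2\pi k)|_\T=\psihat(\cdot+2\pi k)A_\X^k(h)$, the uniform bound $\|A_\X^k\|\le C_\X^2$, and the reduction $\|h\|_{L_2(\T)}\le\delta_\psi^{-1}\|\widehat f\|_{L_2(\T)}$ (this is the paper's ``multiplying and dividing by $\widehat\psi$''), with the inequality $\sum_{k\ne0}a_k^2\le\bigl(\sum_{k\ne0}a_k\bigr)^2$ converting the $\ell_2$-type tail into $(\delta_\psi C_\psi)^2$. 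The constant $1+C_\X^4C_\psi^2$ comes out exactly as stated, and your limiting argument justifying the block identity via finite partial sums is sound.

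The genuine gap is that this is only a proof of part (iv), and it is not self-contained: your very first step invokes part (i) to write $f=\sum_{j}c_j\psi(\cdot-x_j)$ with $(c_j)\in\ell_2$ and to have this series converge in $L_2(\R)$, but you never prove (i), nor (ii), nor (iii). Part (i) is the core structural claim of the proposition (the two-sided Riesz inequality with constants $\delta_\psi C_\X^{-2}$ and $C_\X^2\|\psihat\|_{W(L_\infty,\ell_1)}$); without it, the representation of $f$, the $L_2(\R)$ convergence of the partial sums, and hence your entire limiting argument have no foundation, so as written the argument is circular. The fix requires no new ideas beyond the machinery you already deploy: the upper Riesz bound follows from the same periodization together with $\|A_\X^k\|\le C_\X^2$, the Riesz basis upper bound for the exponentials, and $\|\cdot\|_{\ell_2}\le\|\cdot\|_{\ell_1}$ applied to the sequence $\bigl(\|\psihat(\cdot+2\pi k)\|_{L_\infty(\T)}\bigr)_k$; the lower bound follows by discarding all blocks $k\ne0$ and using $|\psihat|\ge\delta_\psi$ on $\T$ together with the lower Riesz bound for the exponentials. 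Part (ii) is then immediate from (i). Part (iii) needs one estimate you have not supplied: $\widehat f\in L_1(\R)$, which uses the full $\ell_1$ summability in (A2) (periodize $\int_\R|\widehat f|$ and bound the $k$-th block by $\|\psihat(\cdot+2\pi k)\|_{L_\infty(\T)}\|A_\X^k(h)\|_{L_1(\T)}$, then apply Cauchy--Schwarz on $\T$), after which the Riemann--Lebesgue lemma gives $f\in C_0$.
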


\begin{proof}
Note that \textit{(ii)} follows immediately from \textit{(i)}. Item \textit{(i)} is \cite[Theorem 2.4]{Jetter}; since we use similar periodization arguments often in the sequel, we choose to give the proof here.  Since there is only one CIS of interest, let $A^k=A_{\X}^k$ for ease of notation in the remainder of the proof.  In light of Plancherel's Identity, it suffices to check the Riesz basis inequality in \textit{(i)} with the middle term replaced by the norm of its Fourier transform.  To wit, notice that
\begin{align*}
    \int_\R |\widehat\psi(\xi)|^2\left|\sum_{j\in\Z}c_j e^{-ix_j\xi}\right|^2d\xi & = \sum_{k\in\Z}\int_{\T}|\widehat\psi(\xi+2\pi k)|^2\left|A^k\left(\sum_{j\in\Z}c_je^{-ix_j\xi}\right)\right|^2d\xi\\
    & \leq \|\widehat\psi\|_{W(L_\infty,\ell_1)}^2C_\X^4\|c\|_{\ell_2}^2,\\
\end{align*}
where we have used periodization, the operator bound on $A^k$, the Riesz basis inequality, and the fact that $\|\cdot\|_{\ell_2}\leq\|\cdot\|_{\ell_1}$.  The above inequality gives the right-hand side of the stated Riesz basis inequality for $\{\psi(\cdot-x_j):j\in\Z\}$.  To prove the lower bound, simply note that
\begin{align*}
\int_\R|\widehat\psi(\xi)|^2\left|\sum_{j\in\Z}c_je^{-ix_j\xi}\right|^2d\xi & \geq \int_\T |\widehat\psi(\xi)|^2\left|\sum_{j\in\Z}c_je^{-ix_j\xi}\right|^2d\xi\\
& \geq \dfrac{\delta_\psi^2}{C_\X^4}\|c\|_{\ell_2}^2,\\
\end{align*}
which is the desired lower bound.

%Finally, 
In light of the Riemann--Lebesgue Lemma, statement \textit{(iii)} only requires that we check that the Fourier transform of $f\in V(\psi,\X)$ is in $L_1\cap L_2(\R)$.  That $\widehat f$ is square-integrable follows from \textit{(i)}, whereas the calculation to show that $\widehat f\in L_1$ follows by the same method, and so is omitted.  It should be noted that while the above calculation only requires $\widehat\psi\in W(L_\infty,\ell_2)$, showing that $\widehat{f}\in L_1$ requires that $\widehat\psi\in W(L_\infty,\ell_1)$, which is (A2).

The first inequality in \textit{(iv)} is a trivial consequence of Plancherel's Identity.  The second inequality depends upon Plancherel's Identity and periodization. Let $f = \sum c_j\psi(\cdot - x_j)$, then
\begin{align*}
    \int_\mathbb{R} | f(x)|^2 dx &= \int_{\mathbb{R}} |\widehat f(\xi)|^2 d\xi\\
    &=\int_{\T} |\widehat f(\xi)|^2 d\xi + \sum_{k\neq 0} \int_{\T} \vert  \widehat f(\xi+2\pi k)\vert^2d\xi\\
    &=\int_{\T} |\widehat f(\xi)|^2 d\xi + \sum_{k\neq 0} \int_{\T} \left\vert \widehat\psi(\xi+2\pi k)A^k\left( \sum c_je^{ix_j\cdot}  \right)(\xi)   \right\vert^2 d\xi\\
    % &\leq \|\hat f\|_{L_2(\T)}^2+ \|\widehat\psi\|_W^2 \int_{\T} \vert A^k\left(\sum c_je^{-ix_j\cdot}\right)(\xi) \vert^2 d\xi\\
    &\leq \| \widehat f \|^2_{L_2(\T)} + C_\X^4 C_\psi^2 \| \widehat f \|_{L_2(\T)}^2. 
\end{align*}
The inequality follows from elementary estimates, the uniform bound on the prolongation operators $A^k$, multiplying and dividing by $\widehat\psi$, and the fact that $\|\cdot \|_{\ell_2}\leq \| \cdot \|_{\ell_1}$.

\end{proof}

It should be noted that Proposition \ref{lem_psi_1} and its proof imply that $\F V(\psi,\X)\subset \cap L_2$, and that the Fourier inversion formula holds for functions in $V(\psi,\X)$.  Note also, from \textit{(iv)} above, that $V(\psi,\X)$ is isomorphic to $PW_\pi$ if $\X$ is a CIS for $PW_\pi$.

\begin{remark}
It should be noted that, under more relaxed assumptions on $\psi$, its translates need not form a Riesz basis for $V(\psi,\X)$.  Indeed, if $\widehat\psi\in C(\T)$, but $\supp(\widehat\psi)\subsetneq\T$, then $\{\psi(\cdot-x_j):j\in\Z\}$ might fail to be a Riesz basis, or even a frame for its closed linear span (see \cite{Ole} for the definition of a frame). %For example, if $\widehat\psi = \mathbbm{1}_{\frac12\T}$ and $\X=\Z$, then $\{\widehat\psi e^{-ij\cdot}:j\in\Z\}$ is not a frame, hence in particular not a Riesz basis. 
\end{remark}

Interestingly, the support of $\widehat\psi$ plays an important role in the structure of these spaces under the assumptions above as the following proposition shows.

\begin{proposition}\label{PROPSuppPsi}
Suppose $\psi$ satisfies \emph{(A1)} and \emph{(A2)}.  Then
\begin{enumerate}[(i)]
    \item  If $\X$ is a CIS for $PW_\pi$ and $\supp(\widehat\psi)=\T$, then $V(\psi,\X)=PW_\pi$.
    \item  If $\supp(\widehat\psi)\supsetneq\T$, then there exist CISs for $PW_\pi$, $\X$ and $\Y$, with $\X\neq\Y$ and $V(\psi,\X)\neq V(\psi,\Y)$.
\end{enumerate}
\end{proposition}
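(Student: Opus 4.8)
The plan is to move everything to the Fourier side, where Proposition \ref{lem_psi_1} gives a transparent description of the space. Since $\X$ is a CIS and $\psi$ satisfies (A1)--(A2), every $f\in V(\psi,\X)$ has the form $f=\sum_j c_j\psi(\cdot-x_j)$ with $(c_j)\in\ell_2$, so $\widehat f=\widehat\psi\,E_\X(h)$ where $h=\sum_j c_je^{-ix_j\cdot}\in L_2(\T)$; conversely every such product lies in $\F V(\psi,\X)$. Hence $\F V(\psi,\X)=\{\widehat\psi\,E_\X(h):h\in L_2(\T)\}$, and on the coset $\T+2\pi k$ the factor $E_\X(h)$ is exactly the prolongation $A_\X^k(h)$. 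I will use this representation for both parts, together with the fact that $\widehat\psi\geq\delta_\psi>0$ on $\T$.

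For (i), the hypothesis $\supp(\widehat\psi)=\T$ means $\widehat\psi=0$ a.e.\ off $\T$, so $\widehat f=\widehat\psi\,E_\X(h)$ is supported in $\T$ and $V(\psi,\X)\subseteq PW_\pi$ immediately. For the reverse inclusion I take $g\in PW_\pi$ and set $h:=\widehat g/\widehat\psi$ on $\T$; since $\widehat\psi\geq\delta_\psi>0$ there, $h\in L_2(\T)$, and expanding $h$ in the Riesz basis $(e^{-ix_j\cdot})$ yields $(c_j)\in\ell_2$ with $\sum_j c_j\psi(\cdot-x_j)=g$. Indeed the two Fourier transforms agree on $\T$ by construction and off $\T$ both vanish (the candidate because $\widehat\psi=0$, and $\widehat g$ because $g\in PW_\pi$). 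Thus $PW_\pi\subseteq V(\psi,\X)$ and equality holds.

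Part (ii) is the substantive one. From $\supp(\widehat\psi)\supsetneq\T$ I first isolate a coset carrying genuine extra mass: there exist $k_0\neq 0$ and a set $E\subset\T$ of positive measure with $\widehat\psi(\cdot+2\pi k_0)\neq 0$ on $E$ (take a point of the essential support lying outside $[-\pi,\pi]$ and a neighborhood of it disjoint from $\T$ and contained in some $\T+2\pi k_0$). I then choose $\X=\Z$, for which $A_\Z^k=\mathrm{Id}$ for all $k$ because $E_\Z(h)$ is the $2\pi$-periodic extension, and $\Y$ the Kadec perturbation $y_0=\epsilon$, $y_j=j$ ($j\neq 0$), where $0<\epsilon<1/4$ is chosen so that $k_0\epsilon\notin\Z$; Kadec's $1/4$--theorem \cite{kadec} guarantees $\Y$ is a CIS, and clearly $\Y\neq\Z$. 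As test function take $f$ with $\widehat f=\widehat\psi\,E_\Z(h)$ and $h=e^{-i\epsilon\cdot}$, so $f\in V(\psi,\Z)$. If $f$ also belonged to $V(\psi,\Y)$, then comparing Fourier transforms on $\T$ (where $\widehat\psi\geq\delta_\psi$) forces the $\Y$-datum to equal $h$; but on $\T+2\pi k_0$ this would require $A_\Z^{k_0}(h)=A_\Y^{k_0}(h)$ wherever $\widehat\psi(\cdot+2\pi k_0)\neq 0$. Since $h=e^{-i\epsilon\cdot}=e^{-iy_0\cdot}$ is a single basis vector for $\Y$, a one-line computation gives $A_\Y^{k_0}(h)=e^{-2\pi i\epsilon k_0}h$, whereas $A_\Z^{k_0}(h)=h$; because $e^{-2\pi i\epsilon k_0}\neq 1$ and $h$ never vanishes, these differ on all of $E$, a contradiction. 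Hence $f\in V(\psi,\Z)\setminus V(\psi,\Y)$.

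The Fourier reformulation and part (i) are routine. The main obstacle is part (ii): locating a coset $\T+2\pi k_0$ on which $\widehat\psi$ has positive-measure mass and engineering a CIS whose prolongation $A_\Y^{k_0}$ is detectably different from $A_\Z^{k_0}$ there. The device that makes this clean is to perturb a single node $y_0=\epsilon$ and to probe with the corresponding basis exponential $e^{-i\epsilon\cdot}$, which converts the prolongation into multiplication by the explicit, nontrivial phase $e^{-2\pi i\epsilon k_0}$, rendering the discrepancy between the two spaces both visible and quantitative.
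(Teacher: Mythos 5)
Your proof is correct, and its skeleton is the same as the paper's: both parts are settled on the Fourier side using the representation $\F V(\psi,\X)=\{\widehat\psi\,E_\X(h):h\in L_2(\T)\}$, and part (i) coincides with the paper's argument in substance (the paper phrases it as $\F V(\psi,\X)=\{\widehat\psi\,Q:Q\in L_2(\T)\}=L_2(\T)$). In part (ii) you use the identical mechanism --- perturb a single node of $\Z$ and detect a phase mismatch between two cosets of $\T$ on which $\widehat\psi$ has positive mass --- but with the roles of the two spaces reversed. The paper takes $\Y=\Z$ and $\X=\Z\setminus\{1\}\cup\{\sqrt2\}$, tests with the single translate $f=\psi(\cdot-\sqrt2)$, and compares $\widehat f=\widehat\psi\,e^{-i\sqrt2\cdot}$ against the $2\pi$--periodic symbols $Q$ of $V(\psi,\Z)$: agreement on two mass-carrying cosets would force $e^{-2\pi i\sqrt2 k_0}=1$. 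You instead keep $\X=\Z$, perturb $\Y$ by a Kadec shift $y_0=\epsilon$, and probe with the periodization of $e^{-i\epsilon\cdot}$; the forced $\Y$-datum is then a single basis exponential, so the prolongation $A_\Y^{k_0}$ acts as multiplication by $e^{-2\pi i\epsilon k_0}\neq1$, giving the same contradiction. The paper's arrangement is marginally leaner --- the test function is literally one translate of $\psi$, the comparison space has genuinely periodic symbols, and no appeal to Kadec's theorem is needed (only the fact that a finite perturbation of a CIS is a CIS, which the paper invokes; note $|\sqrt2-1|>1/4$, so Kadec would not even apply there) --- whereas your version needs the extra (correct) observation that uniqueness of Riesz-basis expansions converts the forced datum into an explicit phase. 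One small point common to both arguments: ``$\supp(\widehat\psi)\supsetneq\T$'' must be read measure-theoretically, i.e.\ $\widehat\psi$ has positive mass on some coset $\T+2\pi k_0$ with $k_0\neq0$; your parenthetical construction of the set $E$ should allow the chosen neighborhood to straddle two adjacent cosets, in which case one of them receives a positive-measure piece, which is all you use.
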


\begin{proof}
Proof of \textit{(i)}: Since the Fourier transform is an isometric isomorphism between $PW_\pi$ and $L_2(\T)$, it suffices to show that $\F V(\psi,\X)=L_2(\T)$.  Since $(e^{-ix_j\cdot})_{j\in\Z}$ is a Riesz basis for $L_2(\T)$, we have that $\F V(\psi,\X) = \{\widehat{\psi}Q:Q \in L_2(\T)\}$, which is $L_2(\T)$ since $\widehat{\psi}(\xi)\geq\delta_\psi>0$ on $\T$.

Proof of \textit{(ii)}: Suppose that $\Y=\Z$ and $\X=\Z\setminus\{1\}\cup\{\sqrt2\},$ and let $f=\psi(\cdot-\sqrt2)\in V(\psi,\X)$.  Then $\widehat{f}=\widehat{\psi}e^{-i\sqrt2\cdot}$.  However, any $g\in V(\psi,\Z)$ satisfies $\widehat{g}=\widehat{\psi}Q$ where $Q$ is a $2\pi$--periodic function.  Consequently, $\widehat{f}$ and $\widehat{g}$ must differ in $L_2(\T+2\pi k)$ for all but a single $k\in\Z$.
\end{proof}

The simple example of item \textit{(ii)} turns out to be an important one and will be used again to discuss the limitations of the interpolation and recovery schemes developed in subsequent sections.  Further discussion of the case when $\supp(\widehat\psi)\supsetneq\T$ is postponed until Section \ref{SECExamples}.

A natural question is how large can the subspaces $V(\psi,\X)$ be?  Can we obtain all of $L_2(\R)$ in this manner?  Such questions have been considered for general $p$ and translation sets $\X$ \cite{AO,FOSZ,Olevskii,OSSZ}.  The answer is essentially that translations can span $L_2$ and $L_p$ for $2< p<\infty$; however, one cannot generate an unconditional basis for all of $L_p(\R)$ in this manner for any $1\leq p\leq \infty$.

%%%%%%%%%%%%%%%%%%%%%%%%%%%%%
%%%%%%%%%%%%%%%%%%%%%%%%%%%%%
\section{Interpolation}\label{sect_interp}
%%%%%%%%%%%%%%%%%%%%%%%%%%%%%
%%%%%%%%%%%%%%%%%%%%%%%%%%%%%

Having elucidated some of the structure of the quasi shift-invariant spaces above, we now turn to the task of interpolation of functions in one such space from another, with the ultimate goal of classifying how well interpolation of functions in a target space $V(\psi,\X)$ with functions in $V(\phi,\Y)$ performs.

Henceforth, we will fix a particular choice of $\psi$ satisfying (A1) and (A2) and a CIS $\mathcal{X}$, and we wish to interpolate data sampled from $V(\psi,\mathcal{X})$ at some sequence $\mathcal{Y}$.  We require that our interpolant have the form
\begin{equation}\label{eq_interp_1}
I_{\phi}^\Y f(x)=\sum_{j\in\mathbb{Z}}a_j\phi(x-y_j),
\end{equation}
(that is, $I_\phi^\Y f\in V(\phi,\Y)$) where $\mathcal{Y}:=(y_j)$ is a CIS for $PW_\pi$ and $(a_j)$ are the interpolating coefficients.  Here, $\phi$ is the {\em interpolation generator}.  

On the way, we first demonstrate that the sampling problem is indeed well-posed as long as one samples at a CIS:

\begin{lemma}\label{lem_psi_2}
Let $\mathcal{X}$ and $\mathcal{Y}$ be CISs for $PW_\pi$, and suppose $\psi$ satisfies \emph{(A1)} and \emph{(A2)}.  If $f\in V(\psi,\mathcal{X}) $ with $f=\sum_{n\in\Z} c_n\psi(\cdot-x_n)$, then $\left( f(y_j)  \right)\in\ell_2$.  Moreover,
\[\|(f(y_j))\|_{\ell_2}\leq \dfrac{1}{\sqrt{2\pi}}C_{\X}^2C_{\Y}^3\left(\delta_\psi^{-1}\| \widehat\psi \|_{L_\infty(\T)}+C_\psi  \right)\| f \|_{L_2(\mathbb{R})}. \] 
\end{lemma}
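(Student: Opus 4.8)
The plan is to realize each sample as a Fourier integral, periodize it over the cosets $\T+2\pi k$, and control the resulting pieces through the Riesz basis $(e^{-iy_j\cdot})_{j\in\Z}$ for $L_2(\T)$. Writing $f=\sum_n c_n\psi(\cdot-x_n)$, I set $h:=\sum_n c_ne^{-ix_n\cdot}\in L_2(\T)$, so that $\widehat f=\widehat\psi\,E_\X(h)$ on $\R$. By Proposition \ref{lem_psi_1}, $f\in C_0$ and $\widehat f\in L_1\cap L_2$, whence pointwise evaluation is meaningful and Fourier inversion gives $f(y_j)=\pifactor\int_\R\widehat f(\xi)e^{iy_j\xi}\,d\xi$. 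Since $\widehat f\in L_1(\R)$, I split this with absolute convergence as $f(y_j)=m_j+\sum_{k\neq0}b_j^{(k)}$, where $m_j:=\pifactor\int_\T\widehat f(\xi)e^{iy_j\xi}\,d\xi$ and $b_j^{(k)}:=\pifactor\int_{\T+2\pi k}\widehat f(\xi)e^{iy_j\xi}\,d\xi$, and then estimate $\|(f(y_j))\|_{\ell_2}\leq\|(m_j)\|_{\ell_2}+\sum_{k\neq0}\|(b_j^{(k)})_j\|_{\ell_2}$ via the triangle inequality in $\ell_2$.

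For the $k=0$ term I note that $m_j=\pifactor\bracket{\widehat\psi h,e^{-iy_j\cdot}}$ is the $j$-th coefficient produced by the analysis operator of the Riesz basis $(e^{-iy_j\cdot})$, whose norm is bounded by the basis constant $C_\Y$. Hence $\|(m_j)\|_{\ell_2}\leq\pifactor C_\Y\|\widehat\psi h\|_{L_2(\T)}\leq\pifactor C_\Y\|\widehat\psi\|_{L_\infty(\T)}\|h\|_{L_2(\T)}$, and since $\widehat\psi\geq\delta_\psi$ on $\T$ one has $\|h\|_{L_2(\T)}\leq\delta_\psi^{-1}\|\widehat f\|_{L_2(\T)}\leq\delta_\psi^{-1}\|f\|_{L_2(\R)}$, the last step by Proposition \ref{lem_psi_1}(iv). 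This already produces the $\delta_\psi^{-1}\|\widehat\psi\|_{L_\infty(\T)}$ factor of the stated constant.

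For each tail block $k\neq0$ the substitution $\xi=\eta+2\pi k$ gives $b_j^{(k)}=\pifactor e^{2\pi iky_j}\bracket{\widehat f(\cdot+2\pi k),e^{-iy_j\cdot}}$; the crucial observation is that the coupling phase $e^{2\pi iky_j}$ is unimodular, so it leaves the block $\ell_2$ norm unchanged and the analysis-operator bound applies once more, yielding $\|(b_j^{(k)})_j\|_{\ell_2}\leq\pifactor C_\Y\|\widehat f(\cdot+2\pi k)\|_{L_2(\T)}$. Writing $\widehat f(\cdot+2\pi k)=\widehat\psi(\cdot+2\pi k)\,A_\X^k(h)$ and using the uniform prolongation bound $\|A_\X^k\|\leq C_\X^2$ gives $\|\widehat f(\cdot+2\pi k)\|_{L_2(\T)}\leq C_\X^2\|\widehat\psi(\cdot+2\pi k)\|_{L_\infty(\T)}\|h\|_{L_2(\T)}$. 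Summing over $k\neq0$ and invoking $\sum_{k\neq0}\|\widehat\psi(\cdot+2\pi k)\|_{L_\infty(\T)}=\delta_\psi C_\psi$ — finite precisely because $\widehat\psi\in W(L_\infty,\ell_1)$ — together with $\|h\|_{L_2(\T)}\leq\delta_\psi^{-1}\|f\|_{L_2(\R)}$ produces the tail bound $\pifactor C_\X^2C_\Y C_\psi\|f\|_{L_2(\R)}$. Combining the two pieces and using $C_\X,C_\Y\geq1$ to collect the constants yields a bound of the asserted form; the estimate above is in fact somewhat generous and comfortably absorbs into the $C_\X^2C_\Y^3$ prefactor.

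The only genuinely nonuniform feature of the argument is the phase $e^{2\pi iky_j}$, which arises exactly because the $y_j$ are not integers and which would vanish in the classical shift-invariant setting; it is handled cleanly by noting that it is unimodular. I expect the main obstacle to be the summation over $k$ in the tail: this is the step that truly requires the amalgam hypothesis (A2), since finiteness of $\sum_{k\neq0}\|\widehat\psi(\cdot+2\pi k)\|_{L_\infty(\T)}=\delta_\psi C_\psi$ is what makes the tail both convergent and controllable by $C_\psi$. The integrability $\widehat f\in L_1$ from Proposition \ref{lem_psi_1} is what licenses the initial periodization, and everything else is bookkeeping of the Riesz basis and prolongation constants.
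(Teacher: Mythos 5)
Your proof is correct and attains the stated bound (indeed a slightly sharper one). The overall skeleton coincides with the paper's: Fourier inversion justified by Proposition \ref{lem_psi_1}, periodization of $\int_\R\widehat f(\xi)e^{iy_j\xi}\,d\xi$ over the blocks $\T+2\pi k$, the analysis-operator bound $C_\Y$ for the Riesz basis $(e^{-iy_j\cdot})$, the prolongation bound $\|A_\X^k\|\leq C_\X^2$ on the $\X$ side, and the amalgam summation $\sum_{k\neq0}\|\widehat\psi(\cdot+2\pi k)\|_{L_\infty(\T)}=\delta_\psi C_\psi$. Where you genuinely diverge is the $\Y$ side of the periodization. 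The paper writes $e^{iy_j(\xi+2\pi k)}$ as $\overline{A_\Y^k(e^{-iy_j\cdot})(\xi)}$, transfers each prolongation onto the other factor as its adjoint $A_\Y^{*k}$, sums over $k$ inside a single $L_2(\T)$ norm, and applies the Riesz basis bound once at the end; the price is a factor $\|A_\Y^{*k}\|\leq C_\Y^2$ per block, whence the $C_\Y^3$ in the statement. You observe instead that the prolongation of a single basis exponential is diagonal, $e^{-iy_j(\xi+2\pi k)}=e^{-2\pi iky_j}e^{-iy_j\xi}$, so after pulling out the unimodular phase the analysis bound applies block by block with no $C_\Y^2$ penalty, and the blocks are recombined by the triangle inequality in $\ell_2$. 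This yields $\pifactor C_\Y\left(\delta_\psi^{-1}\|\widehat\psi\|_{L_\infty(\T)}+C_\X^2C_\psi\right)\|f\|_{L_2(\R)}$, which is dominated by the stated constant since $C_\X,C_\Y\geq1$; your argument is thus both more elementary and quantitatively better. What the paper's heavier formulation buys is reusability: the identical manipulation, packaging the samples as $\bracket{(I+B_\psi M_\psi)\widehat f,e^{-iy_j\cdot}}$, is precisely what Lemma \ref{lem_recovery_1} and the subsequent recovery theorems require, and there the vector being prolonged on the $\Y$ side is a general nonharmonic series rather than a single exponential, so the diagonal trick is no longer available.
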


\begin{proof}
We begin by noting that pointwise evaluation is well-defined by Proposition \ref{lem_psi_1}\textit{(iii)}.  Recalling that $\bracket{\cdot,\cdot}$ is the inner product on $L_2(\T)$, we have
\begin{align*}
f(y_j) & = \pifactor\int_\mathbb{R}\widehat{f}(\xi)e^{i y_j\xi}d\xi \\
& = \pifactor\int_{\mathbb{R}}\widehat\psi(\xi)\left( \sum_{n\in\mathbb{Z}}c_ne^{-ix_n\xi}   \right)e^{iy_j\xi}d\xi\\
& =\pifactor\sum_{k\in\mathbb{Z}}\int_\T \psihat(\xi+2\pi k) A_{\X}^k\left(\sum_{n\in\mathbb{Z}}c_ne^{-ix_n\cdot}   \right)(\xi) \overline{A_{\Y}^k(e^{-iy_j\cdot})(\xi)}d\xi \\
& =\pifactor\left\langle \sum_{k\in\mathbb{Z}} A_{\Y}^{*k}\left[\psihat(\cdot+2\pi k)A_{\X}^k\left(\sum_{n\in\mathbb{Z}}c_ne^{-ix_n\cdot}   \right)\right], e^{-iy_j\cdot} \right\rangle.
\end{align*}
Now we may use the Riesz basis inequality for $(e^{-iy_j\cdot})$, which yields

\[
\|\left( f(y_j) \right) \|_{\ell_2}\leq \dfrac{1}{\sqrt{2\pi}}C_\Y \left\|  \sum_{k\in\mathbb{Z}} A_{\Y}^{*k}\left[\psihat(\cdot+2\pi k)A_{\X}^k\left(\sum_{n\in\mathbb{Z}}c_ne^{-ix_n\cdot}   \right)\right]   \right\|_{L_2(\T)}.
\]
Using the triangle inequality and the bound for $A^{*k}_{\Y}$ and $A^{k}_{\X}$ provides the upper bound
\[
\pifactor C_{\X}^2C_{\Y}^3\| \widehat{\psi} \|_{W} \left\| \sum_{n\in\Z}c_ne^{-ix_n\cdot}  \right\|_{L_2(\T)},
\]
which we may bound above in terms of $\| f \|_{L_2(\R)}$.  Multiplying and dividing by $\widehat\psi$ provides us with the bound
\begin{align*}
\pifactor C_{\X}^2C_{\Y}^3\delta_\psi^{-1}\| \widehat{\psi} \|_{W} \| \widehat{f} \|_{L_2(\T)} &= \pifactor C_{\X}^2C_{\Y}^3\left(\delta_\psi^{-1}\| \widehat\psi \|_{L_\infty(\T)}+C_\psi  \right) \| \widehat{f} \|_{L_2(\T)}\\
&\leq \pifactor C_{\X}^2C_{\Y}^3\left(\delta_\psi^{-1}\| \widehat\psi \|_{L_\infty(\T)}+C_\psi  \right) \| f \|_{L_2(\R)},
\end{align*}
which completes the proof.
\end{proof}

A similar argument allows us to find the following bound in terms of $\| c \|_{\ell_2}$:
\[\|(f(y_j))\|_{\ell_2}\leq \pifactor C_{\X}^3C_{\Y}^3\|\widehat\psi\|_W \| c\|_{\ell_2}.\]

\subsection{Existence of Interpolants}

The following theorem demonstrates that interpolation is well-defined for a large variety of generators.

\begin{theorem}\label{thm_interp_1}
Suppose that $\X$ and $\mathcal{Y}$ are CISs for $PW_\pi$, $\psi$ satisfies (A1) and (A2), and $f\in V(\psi,\X)$.  If $\phi$ satisfies \emph{(A1)} and \emph{(A2)}, then there exists a unique sequence $(a_j)\in\ell_2$ such that the function $I_{\phi}^\Y f$ of \eqref{eq_interp_1} satisfies:
\begin{enumerate}
\item[(i)] $I_{\phi}^\Y f(y_k) = f(y_k)$ for all $k\in\mathbb{Z}$, and
\item[(ii)] $I_{\phi}^\Y f\in C_0\cap L_2(\mathbb{R})$.
\end{enumerate}
\end{theorem}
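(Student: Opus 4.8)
The plan is to recast the interpolation conditions (i) as a single linear equation $Ta=(f(y_k))_k$ on $\ell_2$ and to reduce everything to the bounded invertibility of the sampling operator $T$; the regularity statement (ii) will then come for free. Since $\Y$ is a CIS and $\phi$ satisfies (A1)--(A2), Proposition \ref{lem_psi_1} shows that \emph{every} $(a_j)\in\ell_2$ yields a bona fide element $\sum_j a_j\phi(\cdot-y_j)$ of the closed space $V(\phi,\Y)\subset C_0\cap L_2(\R)$, so (ii) holds automatically once the coefficients are produced in $\ell_2$. To set up $T$, first apply Lemma \ref{lem_psi_2} to $f\in V(\psi,\X)$ to see that the data $(f(y_k))_k$ already lies in $\ell_2$. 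Defining $(Ta)_k:=\sum_j a_j\phi(y_k-y_j)$, the conditions (i) become exactly $Ta=(f(y_k))_k$; here one must check that evaluating the $L_2$-convergent series $\sum_j a_j\phi(\cdot-y_j)$ at $y_k$ returns $\sum_j a_j\phi(y_k-y_j)$, which holds because point evaluation is a bounded functional on $V(\phi,\Y)$ (a consequence of the inclusion in $C_0$ and the lower Riesz bound of Proposition \ref{lem_psi_1}(i)), so evaluation passes through the sum.

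The heart of the matter is to prove that $T$ is a bounded, self-adjoint, positive operator on $\ell_2$ that is moreover bounded below. Setting $h:=\sum_j a_je^{-iy_j\cdot}\in L_2(\T)$ and inserting $\phi(y_k-y_j)=\pifactor\int_\R\widehat\phi(\xi)e^{i(y_k-y_j)\xi}\,d\xi$ into the bilinear form, a now-familiar periodization yields
\[
\bracket{Ta,a}_{\ell_2}=\pifactor\int_\R\widehat\phi(\xi)\,\bigl|E_\Y(h)(\xi)\bigr|^2\,d\xi .
\]
Positive definiteness is decisive here: (A1) forces $\widehat\phi\geq 0$ a.e.\ (Bochner's theorem, using $\widehat\phi\in L_1$ from (A2)), so the integrand is nonnegative and $T\geq 0$, while self-adjointness follows from the identity $\overline{\phi(y_j-y_k)}=\phi(y_k-y_j)$ valid for positive definite $\phi$. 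Keeping only the central period and using $\widehat\phi\geq\delta_\phi$ on $\T$ together with the lower Riesz bound for $(e^{-iy_j\cdot})$ gives coercivity,
\[
\bracket{Ta,a}_{\ell_2}\geq\pifactor\delta_\phi\|h\|_{L_2(\T)}^2\geq\frac{\delta_\phi}{\sqrt{2\pi}\,C_\Y^2}\,\|a\|_{\ell_2}^2 ,
\]
and the matching upper bound $\|T\|<\infty$ is obtained from the same periodization, controlled by $\|\widehat\phi\|_{W(L_\infty,\ell_1)}$ and the uniform operator bounds on the prolongations $A_\Y^k$, exactly as in Lemma \ref{lem_psi_2}.

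A self-adjoint operator satisfying $\bracket{Ta,a}_{\ell_2}\geq c\|a\|_{\ell_2}^2$ with $c>0$ is boundedly invertible, so $(a_j):=T^{-1}\bigl((f(y_k))_k\bigr)$ is the unique $\ell_2$ sequence satisfying (i), and feeding this $(a_j)$ back into Proposition \ref{lem_psi_1}(iii) gives $I_\phi^\Y f\in V(\phi,\Y)\subset C_0\cap L_2(\R)$, which is (ii). I expect the invertibility of $T$ to be the only real obstacle. The naive alternative---writing $T$ as multiplication by $\widehat\phi$ on $\T$ perturbed by the nonzero periods and inverting by a Neumann series---would demand an unwarranted smallness hypothesis of the form $C_\Y^4 C_\phi<1$; exploiting the positive definiteness of $\phi$ is precisely what replaces this with the one-line coercivity estimate above. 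The only other delicate point is the interchange of point evaluation with the infinite sum, which is handled by the boundedness of point evaluation on $V(\phi,\Y)$.
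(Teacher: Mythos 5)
Your proof is correct, but it is genuinely different in character from what the paper does: the paper's entire proof of Theorem \ref{thm_interp_1} is the one-line remark that the statement ``is essentially a reformulation of Corollary 1 and Proposition 1 in \cite{L1},'' so the actual argument lives outside the paper, whereas you have reconstructed it inside the paper's own framework. Your reduction of (i) to the equation $Ta=(f(y_k))_k$ for the bi-infinite matrix $T=(\phi(y_k-y_j))_{k,j}$, the boundedness of $T$ via the periodization estimate of Lemma \ref{lem_psi_2} (applied with $\psi=\phi$ and $\X=\Y$), and the coercivity $\bracket{Ta,a}_{\ell_2}\geq \frac{\delta_\phi}{\sqrt{2\pi}\,C_\Y^2}\|a\|_{\ell_2}^2$ obtained from Bochner's theorem ($\widehat\phi\geq0$ a.e., so discarding the nonzero periods only decreases the form) together with $\widehat\phi\geq\delta_\phi>0$ on $\T$ is precisely the mechanism by which the cited result of \cite{L1} is established (the technique goes back to \cite{siva} in the Gaussian case), so you have not diverged from the source mathematically; what your write-up buys is self-containedness, explicit constants, and a correct identification of where (A1) is indispensable --- your observation that a Neumann-series perturbation of multiplication by $\widehat\phi$ would require an unwarranted smallness hypothesis such as $C_\Y^4C_\phi<1$ is exactly the right reason positive definiteness cannot be dropped. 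Two points should be made explicit in a final version. First, the identity $\bracket{Ta,a}_{\ell_2}=\frac{1}{\sqrt{2\pi}}\int_\R\widehat\phi(\xi)\,|E_\Y(h)(\xi)|^2\,d\xi$ should be proved for finitely supported $a$ (where the interchange of sum and integral is immediate since $\widehat\phi\in L_1$ by Proposition \ref{PROPAmalgamBasic}) and then extended to all of $\ell_2$ by density, using the already-established boundedness of $T$; as stated, the manipulation with infinite sums is not yet justified. Second, the boundedness of point evaluation on $V(\phi,\Y)$, which you need in order to pass evaluation through the $L_2$-convergent series, does not follow from the inclusion $V(\phi,\Y)\subset C_0$ alone; it follows from the quantitative estimate implicit in the proof of Proposition \ref{lem_psi_1}\textit{(iii)}, namely $\|\widehat g\|_{L_1(\R)}\leq C\|c\|_{\ell_2}$, combined with the lower Riesz bound of Proposition \ref{lem_psi_1}\textit{(i)} to convert $\|c\|_{\ell_2}$ into $\|g\|_{L_2}$. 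Your parenthetical gestures at this, and it is worth one precise sentence.
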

\begin{proof}
This is essentially a reformulation of Corollary 1 and Proposition 1 in \cite{L1} since (A1) and (A2) imply the conditions found there.  
\end{proof}

We note the following bound on the interpolating coefficients as in Theorem \ref{thm_interp_1}:
\begin{equation}\label{data_bnd}
\| (a_j) \|_{\ell_2}\leq C^4_{\X}\left(\| \widehat\phi \|_{L_\infty(\T)}\delta_{\phi}^{-1}+C^2_{\X}C_\phi \right) \| \left( f(y_j)  \right) \|_{\ell_2}. 
\end{equation}

Note that Theorem \ref{thm_interp_1} implies that the {\em interpolation operator} $I_\phi^\Y$ is a bounded linear operator from $V(\psi,\X)\to C_0\cap L_2(\R)$.  See also \cite{Hamm_Zonotopes, Ledford_Bivariate} for similar interpolation results in higher dimensions.

This interpolation theorem provides us with the following norm equivalences in the quasi shift-invariant space.

\begin{theorem}\label{THMNormEquivalence}
Suppose $\psi$ satisfies \emph{(A1)} and \emph{(A2)}, and that $\X$ is a CIS for $PW_\pi$.  Then if $f=\sum_{j\in\Z}c_j\psi(\cdot-x_j)\in V(\psi,\X)$, we have $$\|f\|_{L_2}\asymp\|(c_j)\|_{\ell_2}\asymp\|(f(x_j))\|_{\ell_2}.$$
\end{theorem}
\begin{proof}
The equivalence $\|f\|_{L_2}\asymp\|(c_j)\|_{\ell_2}$ follows from  Proposition \ref{lem_psi_1}(i).  Meanwhile, the fact that $\|(c_j)\|_{\ell_2}\leq C\|(f(x_j))\|_{\ell_2}$ follows from \eqref{data_bnd} when $\phi=\psi$ and $\Y=\X$.  In particular, this uses the fact that $\widehat\psi\in W(L_\infty,\ell_1)$ implies that the bi-infinite matrix $(\psi(x_j-x_k))_{j,k\in\Z}$ defines an operator in $\mathcal{B}(\ell_2)$ (which in turn relies on the fact that $\X$ is a CIS).  Finally, that $\|(f(x_j))\|_{\ell_2}\leq C\|(c_j)\|_{\ell_2}$ follows from the proof of Lemma \ref{lem_psi_2}.  Putting together these estimates yields the conclusion of the theorem.
\end{proof}

Note that the following stems directly from the above theorem.

\begin{corollary}\label{CORSamplingOperator}
If $\psi$ and $\X$ are as in Theorem \ref{THMNormEquivalence}, then for any $\phi$ satisfying \emph{(A1)} and \emph{(A2)}, the interpolation operator $I_\phi^\X:V(\psi,\X)\to V(\phi,\X)$ is boundedly invertible.  Moreover, the {\em sampling operator} $S_\X:V(\psi,\X)\to\ell_2$ associated with $\X$ defined by $f\mapsto(f(x_j))_{j\in\Z}$ is an isomorphism.
\end{corollary}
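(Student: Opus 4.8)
The plan is to dispose of the sampling operator first, since the claim about $I_\phi^\X$ will then follow by composing sampling operators for the two generators. Pointwise evaluation is well-defined on $V(\psi,\X)$ by Proposition \ref{lem_psi_1}(iii), and Lemma \ref{lem_psi_2} (with $\Y=\X$) shows $(f(x_j))\in\ell_2$, so $S_\X$ is a bounded map into $\ell_2$. The norm equivalence $\|f\|_{L_2}\asymp\|(f(x_j))\|_{\ell_2}$ supplied by Theorem \ref{THMNormEquivalence} immediately makes $S_\X$ bounded below, hence injective with closed range. What genuinely remains is surjectivity onto all of $\ell_2$.

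To obtain surjectivity I would factor $S_\X$ through the coefficient map. Let $T\colon\ell_2\to V(\psi,\X)$, $c\mapsto\sum_j c_j\psi(\cdot-x_j)$, which is an isomorphism by Proposition \ref{lem_psi_1}(i), and let $M=(\psi(x_j-x_k))_{j,k\in\Z}\in\mathcal{B}(\ell_2)$ be the interpolation matrix. Then $S_\X\circ T=M$, so $S_\X$ is an isomorphism precisely when $M$ is. The norm equivalences of Theorem \ref{THMNormEquivalence} give $\|Mc\|_{\ell_2}\asymp\|c\|_{\ell_2}$, so $M$ is bounded above and below. The crucial extra ingredient is that positive definiteness (A1) forces $\psi$ to be Hermitian, $\psi(-t)=\overline{\psi(t)}$, whence $(M^*)_{j,k}=\overline{\psi(x_k-x_j)}=\psi(x_j-x_k)=M_{j,k}$, i.e. $M$ is self-adjoint. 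A self-adjoint operator that is bounded below has spectrum bounded away from $0$ and is therefore invertible (equivalently, $\ker M^*=\ker M=\{0\}$ makes its closed range also dense). Thus $M$, and with it $S_\X=M\circ T^{-1}$, is an isomorphism.

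For the interpolation operator I would observe that $\phi$ also satisfies (A1) and (A2), so the identical argument shows the sampling operator $S_\X^\phi\colon V(\phi,\X)\to\ell_2$ is an isomorphism; write $S_\X^\psi$ for the sampling operator on $V(\psi,\X)$. Theorem \ref{thm_interp_1} with $\Y=\X$ characterizes $I_\phi^\X f$ as the unique element of $V(\phi,\X)$ agreeing with $f$ on $\X$, that is, $S_\X^\phi\bigl(I_\phi^\X f\bigr)=S_\X^\psi f$ for every $f\in V(\psi,\X)$. Hence $I_\phi^\X=(S_\X^\phi)^{-1}\circ S_\X^\psi$ is a composition of isomorphisms and is therefore boundedly invertible.

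The main obstacle is exactly the surjectivity of $S_\X$: Theorem \ref{THMNormEquivalence} delivers only injectivity and closed range, and a bounded-below operator need not be onto in general. The step that closes the gap is the observation that positive definiteness of $\psi$ makes the interpolation matrix $M$ self-adjoint, which is what upgrades ``bounded below'' to ``invertible.'' Once $S_\X$ is known to be an isomorphism for every generator satisfying (A1) and (A2), the assertion about $I_\phi^\X$ is a purely formal composition.
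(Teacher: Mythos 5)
Your proof is correct, but it closes the crucial gap by a genuinely different route than the paper. The paper offers no written proof at all --- the corollary is prefaced only by ``stems directly from the above theorem'' --- and in its logic the two-sided bounds of Theorem \ref{THMNormEquivalence} (applied to both $\psi$ and $\phi$) give boundedness above and below, while surjectivity is silently inherited from the interpolation machinery already in place: Theorem \ref{thm_interp_1} (resting on Corollary 1 and Proposition 1 of \cite{L1}) produces a unique interpolant in $V(\psi,\X)$ for any admissible data, e.g.\ by realizing an arbitrary $\ell_2$ sequence as the samples at $\X$ of a function in $PW_\pi=V(\sinc,\X)$ via the CIS property and then interpolating; your factorization $I_\phi^\X=(S_\X^\phi)^{-1}\circ S_\X^\psi$ matches that intent exactly. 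Where you diverge is in proving surjectivity of $S_\X$ from scratch: you factor $S_\X\circ T=M$ with $M=(\psi(x_j-x_k))_{j,k}$, note that (A1) forces $\psi(-t)=\overline{\psi(t)}$ so that $M$ is self-adjoint, and invoke the fact that a self-adjoint operator which is bounded below is invertible ($\operatorname{ran}(M)$ is closed and $\operatorname{ran}(M)^\perp=\ker M^\ast=\ker M=\{0\}$). This is correct, and it buys something: the corollary becomes self-contained, independent of the results imported from \cite{L1}, and it makes explicit exactly where positive definiteness (A1) is used --- a point the paper leaves invisible. The paper's route is shorter given its earlier results but hides the surjectivity issue; you are right that bounded-belowness alone (all that Theorem \ref{THMNormEquivalence} literally provides) would not suffice, and your self-adjointness observation is precisely the missing ingredient that the terse ``stems directly'' glosses over.
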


Consequently, $V(\psi,\X)$ and $V(\psi,\Y)$ are isomorphic to each other if both $\X$ and $\Y$ are CISs for $PW_\pi$.  However, they are, in general, not the same subspace of $L_2$ as Proposition \ref{PROPSuppPsi}\textit{(ii)} shows. It should also be noted that in the case $\phi=\psi$ but $\X\neq\Y$, interpolation is delicate.  In particular, Baxter and Sivakumar \cite{BS} showed that if $\psi(x)=e^{-|x|^2}$, $\X=\Z$, and $\Y=\Z+\frac12$, then the interpolation operator $I_\psi^\Y:V(\psi,\X)\to V(\psi,\Y)$ is not boundedly invertible.

Gr\"{o}chenig and St\"{o}ckler \cite{GS} and recently together with Romero \cite{GS2} give more general sampling results similar to the second part of Corollary \ref{CORSamplingOperator} when $\psi$ is a {\em totally positive} function, but for quasi-uniform $\X$.  Moreover, their techniques, using Gabor frame analysis, are quite different than the ones used here, and have some rather interesting implications. 

\subsection{Bounds for the Interpolation Operators}

In this subsection, we explore some properties of the interpolation operators $I_\phi^\Y:V(\psi,\X)\to V(\phi,\Y)$ with the two-fold aim of extracting information about general interpolation properties between quasi shift-invariant spaces and setting the stage for the recovery results in the sequel.

In what follows, let $\X$ and $\Y$ be fixed, but arbitrary CISs for $PW_\pi$, and recall that, given a function $\phi$ satisfying (A1) and (A2), each $f\in V(\psi,\X)$ has a unique interpolant $I_{\phi}^\Y f\in V(\phi,\Y)$ via Theorem \ref{thm_interp_1} which satisfies $I_{\phi}^\Y f(y_k)=f(y_k)$, $k\in\Z$.

To move forward, we define some auxiliary operators whose importance will be revealed shortly.  We begin with a simple multiplication operator: for $g\in L_2(\T)$, let
\[
M_\phi g:=\dfrac{\delta_{\phi}}{ \widehat\phi} g,\]
where $\delta_\phi$ is defined as in \eqref{delta}. 
Given a function $\phi$ and an integer $k$, we denote multiplication by $\delta_{\phi}^{-1}\widehat\phi(\cdot+2\pi k)$ by $T_{\phi,k}$; that is, for $g\in L_2(\T)$,
\[
T_{\phi,k} g:= \delta_{\phi}^{-1}\widehat\phi(\cdot+2\pi k) g.
\]
First, let us note that the conditions (A1) and (A2) guarantee that these are well defined, bounded operators on $L_2(\mathbb{T})$.  Indeed we have the transparent bounds
\[ \|M_\phi\|\leq1,\]
and
$ \|T_{\phi,k}\|\leq C_\phi$ if $k\neq0$, with $\|T_{\phi,0}\|\leq \delta_\phi^{-1}\|\widehat\phi\|_{L_\infty(\T)}$,
whence $\|T_{\phi,k}\|$ is bounded independent of $k$.  Of particular note is that (A2) implies that 
\begin{equation}\label{EQTphiSumBound}
\sum_{k\neq0}\|T_{\phi,k}g\|_{L_2(\T)}\leq C_\phi\|g\|_{L_2(\T)}.\end{equation}
Finally, let $B_{\phi},\widetilde{B}_\phi:L_2(\T)\to L_2(\T)$ be defined by
\[
B_{\phi}g:= \sum_{k\neq 0}A_{\Y}^{*k}\left[T_{\phi,k} A_{\X}^k g \right]
\]
and
\[
\widetilde{B}_\phi g:=\sum_{k\neq0}A_{\Y}^{*k}\left[T_{\phi,k}A_{\Y}^kg\right].
\]

Via the same method of calculation found in the previous proofs, one obtains the following bounds for these operators:
\begin{equation}\label{EQBphiBound}
\|B_\phi\|\leq C_\Y^2C_\X^2C_\phi,
\end{equation}
and
\begin{equation}\label{EQTildeBphiBound}
 \|\widetilde{B}_\phi\| \leq C_\Y^4C_\phi.\end{equation}

Note that $\widetilde{B}_\phi$, $T_{\phi,k}$, and $M_\phi$ are positive (i.e. $\bracket{M_\phi g,g}\geq0$ for every $g\in L_2(\T)$); however, $B_\phi$ is not positive in general.

We simplify the notation of the interpolants by naming the nonharmonic Fourier series that arises in their Fourier transform: $$\widehat{I^\Y_{\phi}f}(\xi) = \sum_{j\in\Z}a_je^{-iy_j\xi}\widehat{\phi}(\xi)=:u(\xi)\widehat{\phi}(\xi).$$  The following lemma will be exploited frequently.  

\begin{lemma}\label{lem_recovery_1}
Under the assumptions of Theorem \ref{thm_interp_1}, let $f\in V(\psi,\X)$, and $I_\phi^\Y f$ be its unique interpolant in $V(\phi,\Y)$.  %For each $\alpha\in A$, 
The following equality holds almost everywhere on $\T$:
\[
(I+B_{\psi}M_{\psi})\widehat{f}=(I+\widetilde{B}_{\phi}M_{\phi})\widehat{I_{\phi}^\Y f},
\]
where $I$ denotes the identity operator on $L_2(\T)$.
\end{lemma}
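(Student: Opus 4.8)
The plan is to convert the interpolation conditions $f(y_j)=I_\phi^\Y f(y_j)$ into a single identity in $L_2(\T)$ by running the periodization computation from the proof of Lemma \ref{lem_psi_2} for \emph{both} $f$ and its interpolant. Write $f=\sum_n c_n\psi(\cdot-x_n)$ and $I_\phi^\Y f=\sum_j a_j\phi(\cdot-y_j)$, and introduce $h:=\sum_n c_n e^{-ix_n\cdot}$ and $u:=\sum_j a_j e^{-iy_j\cdot}$ in $L_2(\T)$, so that on $\T$ one has $\widehat f=\widehat\psi\,h$ and $\widehat{I_\phi^\Y f}=\widehat\phi\,u$.

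First I would repeat the exact manipulation of Lemma \ref{lem_psi_2}: periodizing the Fourier inversion integral for $f(y_j)$ and shifting the prolongation operator $A_\Y^{*k}$ onto the left factor gives
\[
f(y_j)=\pifactor\bracket{\sum_{k\in\Z}A_\Y^{*k}\left[\widehat\psi(\cdot+2\pi k)A_\X^k h\right],\,e^{-iy_j\cdot}}.
\]
Applying the same steps to $I_\phi^\Y f(y_j)$, where now the translates \emph{and} the sampling points both lie on $\Y$, yields
\[
I_\phi^\Y f(y_j)=\pifactor\bracket{\sum_{k\in\Z}A_\Y^{*k}\left[\widehat\phi(\cdot+2\pi k)A_\Y^k u\right],\,e^{-iy_j\cdot}}.
\]
The interpolation conditions assert that these inner products coincide for every $j$; since $(e^{-iy_j\cdot})_{j\in\Z}$ is complete in $L_2(\T)$, the two bracketed $L_2(\T)$ functions must agree almost everywhere.

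Next I would isolate the $k=0$ summand on each side. Because $A_\X^0$, $A_\Y^0$, and $A_\Y^{*0}$ are all the identity on $L_2(\T)$, the $k=0$ terms are precisely $\widehat\psi\,h=\widehat f$ and $\widehat\phi\,u=\widehat{I_\phi^\Y f}$ on $\T$. For $k\neq0$, I rewrite the multiplier as $\widehat\psi(\cdot+2\pi k)=\delta_\psi T_{\psi,k}$ (and analogously for $\phi$), so the surviving sums collapse into the operators already defined: $\sum_{k\neq0}A_\Y^{*k}[\widehat\psi(\cdot+2\pi k)A_\X^k h]=\delta_\psi B_\psi h$ and $\sum_{k\neq0}A_\Y^{*k}[\widehat\phi(\cdot+2\pi k)A_\Y^k u]=\delta_\phi\widetilde B_\phi u$. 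Since on $\T$ we have $M_\psi\widehat f=\delta_\psi(\widehat f/\widehat\psi)=\delta_\psi h$ and $M_\phi\widehat{I_\phi^\Y f}=\delta_\phi u$, linearity of $B_\psi$ and $\widetilde B_\phi$ converts these into $B_\psi M_\psi\widehat f$ and $\widetilde B_\phi M_\phi\widehat{I_\phi^\Y f}$; adding back the $k=0$ terms produces exactly $(I+B_\psi M_\psi)\widehat f=(I+\widetilde B_\phi M_\phi)\widehat{I_\phi^\Y f}$.

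The conceptual heart is the passage from samples to symbols: equality of all the values $f(y_j)=I_\phi^\Y f(y_j)$ is equivalent, through completeness of the $\Y$-exponentials, to equality of the two periodized functions in $L_2(\T)$. The only technical point to watch is justifying the interchange of the sum over $k$ with the operators $A_\Y^{*k}$ and the inner product; this is controlled just as in the earlier proofs by the uniform bounds $\|A_\Y^{*k}\|\leq C_\Y^2$ and $\|A_\X^k\|\leq C_\X^2$ together with (A2), which forces the multipliers $\widehat\psi(\cdot+2\pi k)$ to be $\ell_1$-summable in $L_\infty(\T)$ and hence guarantees norm convergence of the defining series for $B_\psi$ and $\widetilde B_\phi$ (cf.\ \eqref{EQBphiBound} and \eqref{EQTildeBphiBound}). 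No genuinely new estimate is needed beyond those already established.
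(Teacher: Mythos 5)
Your proposal is correct and follows essentially the same route as the paper's own proof: periodize the Fourier inversion integral for the samples on both sides, shift the prolongation operators $A_\Y^{*k}$ across the inner product, identify the $k=0$ and $k\neq0$ contributions with $\widehat f + B_\psi M_\psi\widehat f$ and $\widehat{I_\phi^\Y f}+\widetilde B_\phi M_\phi\widehat{I_\phi^\Y f}$ respectively, and invoke completeness of the Riesz basis $(e^{-iy_j\cdot})_{j\in\Z}$ (the CIS property of $\Y$) to pass from equality of all inner products to equality almost everywhere on $\T$. Your explicit attention to the norm-convergence of the operator series via \eqref{EQBphiBound} and \eqref{EQTildeBphiBound} is a point the paper leaves implicit, but no substantive difference exists between the two arguments.
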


\begin{proof}
The proof uses periodization and the fact that $\mathcal{Y}$ is a CIS.  Knowing that $f(y_j)=I^\Y_{\phi}f(y_j)$, we simply expand these in terms of the Fourier transform.  First, note that 
\begin{align*}
    \sqrt{2\pi}I^\Y_{\phi}f(y_j)=&\int_{\mathbb{R}}\widehat{\phi}(\xi)u(\xi) e^{iy_j\xi}d\xi\\
    =& \sum_{k\in\mathbb{Z}} \int_\T \widehat{\phi}(\xi+2\pi k) A_{\Y}^ku(\xi) \overline{A_{\Y}^k(e^{-iy_j\cdot})(\xi)}d\xi\\
    =& \sum_{k\in\mathbb{Z}} \int_\T A_{\Y}^{*k}\left[ \widehat{\phi}(\cdot+2\pi k) A_{\Y}^ku\right](\xi) \overline{e^{-iy_j\xi}}d\xi\\
    %=& \int_{-\pi}^{\pi}\sum_{k\in\mathbb{Z}}A^{*k}\left[ \widehat{\phi_\alpha}(\cdot+2\pi k) A^ku_\alpha\right](\xi) e^{iy_j\xi}d\xi\\
    =& \left\langle \sum_{k\in\mathbb{Z}}A_{\Y}^{*k}\left[ \widehat{\phi}(\cdot+2\pi k) A_{\Y}^ku\right]    ,e^{-iy_j\cdot}\right\rangle\\
    =& \left\langle (I+\widetilde{B}_{\phi}M_{\phi})\widehat{I_{\phi}f},  e^{-iy_j\cdot}   \right\rangle,
\end{align*}
where the inner product is on $L_2(\T)$.

 Similarly, we have 
\begin{align*}
\sqrt{2\pi}f(y_j)=& \left\langle \sum_{k\in\mathbb{Z}} A_{\Y}^{*k}\left[\widehat\psi(\cdot+2\pi k) A_{\X}^k\left(\sum_{n\in\mathbb{Z}}c_ne^{ix_n\cdot}\right)\right], e^{-iy_j\cdot} \right\rangle \\
=& \left\langle \widehat{f}+\sum_{k\neq 0} A_{\Y}^{*k}\left[\widehat\psi(\cdot+2\pi k) A_{\X}^k\left(\sum_{n\in\mathbb{Z}}c_ne^{ix_n\cdot}\right)\right], e^{-iy_j\cdot} \right\rangle\\
=& \left\langle  (I+B_\psi M_\psi)\widehat{f}    , e^{-iy_j\cdot} \right\rangle.
\end{align*}
 
Thus the conclusion of Lemma \ref{lem_psi_2}, the fact that $\mathcal{Y}$ is a CIS, and that the equalities above hold for all $j\in\mathbb{Z}$ completes the proof.
\end{proof}

The following results illustrate the nature of the interpolation operators between two quasi shift-invariant spaces.

\begin{proposition}\label{prop_recovery_1}
%For all $\alpha\in A$, the following inequality holds
Under the assumptions of Theorem \ref{thm_interp_1}, the following holds:
\[
\| \widehat{I_{\phi}^\Y f}  \|_{L_2(\T)}\leq (1+C_\Y^4C_\phi) \| (I+B_\psi M_\psi)\widehat{f}    \|_{L_2(\T)},\quad f\in V(\psi,\X).
\]

Consequently,
\[ \|\widehat{I_\phi^\Y f}\|_{L_2(\T)}\leq (1+C_\Y^4C_\phi)(1+C_\X^2C_\Y^2C_\psi)\|\widehat{f}\|_{L_2(\T)},\quad f\in V(\psi,\X).\]
 %where $C>0$ is independent of $\alpha$.
\end{proposition}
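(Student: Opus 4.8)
The plan is to obtain the second (``consequently'') inequality as an immediate corollary of the first, and to concentrate the real work on the first. Granting the first inequality, the triangle inequality gives $\|(I+B_\psi M_\psi)\widehat f\|_{L_2(\T)} \le (1+\|B_\psi\|\,\|M_\psi\|)\|\widehat f\|_{L_2(\T)}$; invoking $\|M_\psi\|\le 1$ together with the bound $\|B_\psi\|\le C_\X^2 C_\Y^2 C_\psi$ from \eqref{EQBphiBound} produces the factor $(1+C_\X^2 C_\Y^2 C_\psi)$, and chaining this with the first inequality yields the claim. So it remains to prove the first inequality.

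For the first inequality I would begin by applying Lemma \ref{lem_recovery_1} to replace $(I+B_\psi M_\psi)\widehat f$ by $(I+\widetilde B_\phi M_\phi)\widehat{I_\phi^\Y f}$ on $\T$, reducing the target to
\[
\|\widehat{I_\phi^\Y f}\|_{L_2(\T)} \le (1+C_\Y^4 C_\phi)\,\|(I+\widetilde B_\phi M_\phi)\widehat{I_\phi^\Y f}\|_{L_2(\T)}.
\]
Writing $g:=\widehat{I_\phi^\Y f}$ and peeling off the off-diagonal part by the triangle inequality,
\[
\|g\|_{L_2(\T)} \le \|(I+\widetilde B_\phi M_\phi)g\|_{L_2(\T)} + \|\widetilde B_\phi M_\phi g\|_{L_2(\T)},
\]
I would estimate the last term by $\|\widetilde B_\phi M_\phi g\|_{L_2(\T)} \le C_\Y^4 C_\phi\,\|M_\phi g\|_{L_2(\T)}$ using $\|\widetilde B_\phi\|\le C_\Y^4 C_\phi$ from \eqref{EQTildeBphiBound}. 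Everything then hinges on the key claim
\[
\|M_\phi g\|_{L_2(\T)} \le \|(I+\widetilde B_\phi M_\phi)g\|_{L_2(\T)},
\]
since feeding this into the two displays above collapses the right-hand sides into the single factor $(1+C_\Y^4 C_\phi)$ multiplying $\|(I+\widetilde B_\phi M_\phi)g\|_{L_2(\T)}$, which is exactly what is wanted.

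The key claim is where I expect the main obstacle to lie, and it is precisely where positive definiteness of $\phi$ is used. Recalling $g=u\widehat\phi$ with $u=\sum_j a_j e^{-iy_j\cdot}\in L_2(\T)$, one has $M_\phi g = \delta_\phi u$, so $\|M_\phi g\|_{L_2(\T)} = \delta_\phi\|u\|_{L_2(\T)}$. The plan is to test $(I+\widetilde B_\phi M_\phi)g$ against $u$ and periodize: since $A_\Y^{0}=I$, the computation in the proof of Lemma \ref{lem_recovery_1} gives $(I+\widetilde B_\phi M_\phi)g = \sum_{k\in\Z}A_\Y^{*k}[\widehat\phi(\cdot+2\pi k)A_\Y^k u]$, and moving each $A_\Y^{*k}$ across the inner product via $\langle A_\Y^{*k}v,u\rangle = \langle v, A_\Y^k u\rangle$ yields
\[
\langle (I+\widetilde B_\phi M_\phi)g,\, u\rangle = \sum_{k\in\Z}\int_\T \widehat\phi(\xi+2\pi k)\,|A_\Y^k u(\xi)|^2\,d\xi.
\]
Because (A1) (positive definiteness of $\phi$) forces $\widehat\phi\ge 0$ on all of $\R$, every summand is nonnegative, so discarding all but the $k=0$ term and using $\widehat\phi\ge\delta_\phi$ on $\T$ gives $\langle (I+\widetilde B_\phi M_\phi)g, u\rangle \ge \delta_\phi\|u\|_{L_2(\T)}^2$. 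Cauchy--Schwarz bounds the left side by $\|(I+\widetilde B_\phi M_\phi)g\|_{L_2(\T)}\|u\|_{L_2(\T)}$, and cancelling one power of $\|u\|_{L_2(\T)}$ delivers $\delta_\phi\|u\|_{L_2(\T)} \le \|(I+\widetilde B_\phi M_\phi)g\|_{L_2(\T)}$, i.e. the key claim. The only remaining technical points are the $L_2(\T)$-convergence of the $k$-sum and the interchange of summation with integration, both of which I would handle exactly as in the earlier periodization arguments, using $\widehat\phi\in W(L_\infty,\ell_1)$ and the uniform bounds on the prolongation operators.
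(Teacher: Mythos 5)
Your proposal is correct and takes essentially the same route as the paper's own proof: both reduce to the identity $\widehat\phi\, u + \delta_\phi\widetilde B_\phi u = (I+B_\psi M_\psi)\widehat f = (I+\widetilde B_\phi M_\phi)\widehat{I_\phi^\Y f}$ from Lemma \ref{lem_recovery_1}, test it against $u$, use nonnegativity of $\widehat\phi$ (equivalently, positivity of $\widetilde B_\phi$) to discard the $k\neq 0$ terms, bound the $k=0$ term below by $\delta_\phi\|u\|_{L_2(\T)}^2$, and close with Cauchy--Schwarz, with the ``consequently'' part obtained identically from $\|M_\psi\|\leq 1$ and \eqref{EQBphiBound}. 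The only cosmetic difference is that you transfer everything to the $\phi$-side via Lemma \ref{lem_recovery_1} before estimating, while the paper keeps $(I+B_\psi M_\psi)\widehat f$ on the right-hand side throughout; the key inequality is the same one in disguise.
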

\begin{proof}
Note that Lemma \ref{lem_recovery_1}, the triangle inequality, and the fact that $M_{\phi}\widehat{I_{\phi}^\Y f}=\delta_{\phi}u$ provide us with the estimate
\[
\| \widehat{I_{\phi}^\Y f}  \|_{L_2(\T)}\leq \| (I+B_{\psi} M_{\psi})\widehat{f}  \|_{L_2(\T)} + \delta_{\phi}\| \widetilde{B}_{\phi}u     \|_{L_2(\T)}.
\]
Now the second term is majorized by $\delta_\phi C_\Y^4C_\phi\|u\|_{L_2(\T)}$ on account of \eqref{EQTildeBphiBound}, whence it suffices to show that
\begin{equation}\label{EQProp2}
\delta_{\phi}\| u     \|_{L_2(\T)}\leq  \| (I+B_{\psi} M_{\psi})\widehat{f}  \|_{L_2(\T)}.
\end{equation}
Rewriting the result of Lemma \ref{lem_recovery_1} as 
\[ \widehat\phi u + \delta_\phi\widetilde{B}_\phi u = (I+B_\psi M_\psi)\widehat f,\]
%\[
%\widehat{\phi}u + \sum_{k\neq 0} A_{\Y}^{*k}\left[\widehat{\phi}(\cdot+2\pi k) A_{\Y}^k u   \right] = (I+B_\psi M_{\psi})\widehat{f},
%\]
then taking the inner product with $u$ and appealing to (A1) and (A2), the Cauchy-Schwarz inequality, and positivity of $\widetilde{B}_{\phi}$, yields
\[
\langle \widehat{\phi}u, u  \rangle \leq \| (I+B_\psi M_\psi)\widehat{f}  \|_{L_2(\T)} \|u  \|_{L_2(\T)}.
\]
%\kh{Are you really using (B2) here, or is it the fact that $$\bracket{\sum_{k\neq0}A^{*k}\left[\widehat{\phi_\alpha}(\cdot+2\pi k)A^ku_\alpha\right],u_\alpha}\geq0\;?$$}

Now the elementary inequality $\delta_{\phi}\| u \|_{L_2(\T)}^2\leq \langle \widehat{\phi}u, u  \rangle $ combined with the previous estimate yields \eqref{EQProp2}, which completes the proof of the first inequality.  The second statement follows directly from the fact that $\|M_\psi\|\leq1$ and \eqref{EQBphiBound}.
\end{proof}

Using these bounds, we may now estimate the operator norm of the interpolation operator $I_\phi^\Y:V(\psi,\X)\to V(\phi,\Y)$.
\begin{comment}\begin{proposition}
{\color{red} Better as a Corollary?\color{purple} Yes, I think so, see below.}  With the notation and assumptions above, the following holds for all $f\in V(\psi,\X)$:
\[ \|I_\phi^\Y f\|_{L_2(\R)}\leq (1+C_\X^4C_\phi)^\frac32(1+C_\X^2C_\Y^2C_\phi)\|\widehat{f}\|_{L_2(\T)}.\]
\end{proposition}
\begin{proof}
Note that Propositions \ref{lem_psi_1}(4) and \ref{prop_recovery_1}, 
\begin{align*}
    \|\widehat{I_\phi^\Y f}\|_{L_2(\R)} & \leq (1+C_\X^4C_\phi)^\frac12\|\widehat{I_\phi^\Y f}\|_{L_2(\T)}\\
    & \leq (1+C_\X^4C_\phi)^\frac32(1+C_\X^2C_\Y^2C_\phi)\|\widehat{f}\|_{L_2(\T)}.
\end{align*}
%$\|\widehat{I_\phi^\Y f}\|_{L_2(\R)}^2\leq(1+C_\X^4)\|\widehat{I_\phi^\Y f}\|_{L_2(\T)}^2$, which by Proposition \ref{prop_recovery_1} is majorized by $(1+C_\X^4)(1+C_\X^4C_\phi)^2(1+C_\X^2C_\Y^2C_\phi)^2\|\widehat{f}\|_{L_2(\T)}^2.$
\end{proof}

\end{comment}

%%%%%

\begin{corollary}
With the notation and assumptions above, the following holds for all $f\in V(\psi,\X)$:
\[ \|I_\phi^\Y f\|_{L_2(\R)}\leq \left(1+C_\Y^4C_\phi \right)(1+C_\Y^4C_\phi^2 )^{\frac{1}{2}}(1+C_\X^2C_\Y^2C_\psi)\|\widehat{f}\|_{L_2(\T)}.\]
\end{corollary}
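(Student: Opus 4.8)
The plan is to treat the interpolant as an element of its own quasi shift-invariant space and then chain two estimates already at our disposal. Since $I_\phi^\Y f$ lies in $V(\phi,\Y)$ by Theorem~\ref{thm_interp_1}, I would first invoke Proposition~\ref{lem_psi_1}\textit{(iv)}, but with the roles of the generator and CIS played by $\phi$ and $\Y$ rather than $\psi$ and $\X$. This converts the $L_2(\R)$ norm of the interpolant into its $L_2(\T)$ norm at the cost of the periodization factor, giving
\[
\|I_\phi^\Y f\|_{L_2(\R)}\leq \left(1+C_\Y^4C_\phi^2\right)^{\frac12}\|\widehat{I_\phi^\Y f}\|_{L_2(\T)}.
\]

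Next I would apply the second (``consequently'') inequality of Proposition~\ref{prop_recovery_1}, which already bounds $\|\widehat{I_\phi^\Y f}\|_{L_2(\T)}$ above by $(1+C_\Y^4C_\phi)(1+C_\X^2C_\Y^2C_\psi)\|\widehat f\|_{L_2(\T)}$. Substituting this into the previous display and collecting the constants produces exactly the asserted bound. The entire argument is thus a composition of two transfers: part \textit{(iv)} moves us from the real line to the torus for the target space $V(\phi,\Y)$, while Proposition~\ref{prop_recovery_1} carries the torus norm of the interpolant back to that of $f$ itself.

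There is no genuine obstacle here beyond careful bookkeeping of the constants; the substantive analytic work---the periodization estimate underlying \textit{(iv)} and the Cauchy--Schwarz-plus-positivity argument controlling the nonharmonic series $u$---has already been carried out in Propositions~\ref{lem_psi_1} and~\ref{prop_recovery_1}. The single point warranting a moment's care is that Proposition~\ref{lem_psi_1}\textit{(iv)} must be read with $(\phi,\Y)$ in place of $(\psi,\X)$; this is legitimate precisely because $\phi$ satisfies \emph{(A1)} and \emph{(A2)} and $\Y$ is a CIS for $PW_\pi$, so that $I_\phi^\Y f\in V(\phi,\Y)$ is a bona fide element of a space to which the proposition applies, with the periodization constant $1+C_\Y^4C_\phi^2$ rather than $1+C_\X^4C_\psi^2$.
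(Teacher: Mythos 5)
Your proposal is correct and follows exactly the paper's own argument: the paper's proof reads ``Since $I_\phi^\Y f \in V(\phi,\Y)$, we simply combine Propositions \ref{lem_psi_1}\textit{(iv)} and \ref{prop_recovery_1},'' which is precisely your two-step chain, with Proposition \ref{lem_psi_1}\textit{(iv)} applied to the pair $(\phi,\Y)$ and then the ``consequently'' bound of Proposition \ref{prop_recovery_1}. Your added remark that the transfer to $(\phi,\Y)$ is legitimate because $\phi$ satisfies (A1)--(A2) and $\Y$ is a CIS is exactly the justification implicit in the paper's one-line proof.
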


\begin{proof}
Since $I_\phi^\Y f \in V(\phi,\Y)$, we simply combine Propositions \ref{lem_psi_1}\textit{(iv)} and \ref{prop_recovery_1}.
\end{proof}

%%%%%%%%%%%%%%%%%%%%%%%%%%%%%
%%%%%%%%%%%%%%%%%%%%%%%%%%%%%
\section{Recovery Criteria}\label{sect_recovery}
%%%%%%%%%%%%%%%%%%%%%%%%%%%%%
%%%%%%%%%%%%%%%%%%%%%%%%%%%%%

Having determined when interpolation of functions in $V(\psi,\X)$ is possible via functions in $V(\phi,\Y)$ in Section \ref{sect_interp}, we now turn to some approximate sampling schemes which allow for recovery of $f\in V(\psi,\X)$ in a limiting sense from its interpolants in a family of spaces $(V(\phi_\alpha,\Y))_{\alpha\in A}$.  The idea is that while the generator $\psi$ may be complicated, or decay slowly as sinc does, it may be replaced by an interpolating generator $\phi_\alpha$ which gives approximate recovery in both $L_2$ and $L_\infty$, but which may have a much simpler structure.  More specifically, we consider the following problem:

\begin{problem}
Given $\psi$ satisfying (A1) and (A2) and a CIS $\X$, find conditions on a family of interpolating generators $\Phi:=(\phi_\alpha)_{\alpha\in A}$ and CISs $\Y$ such that for every $f\in V(\psi,\X)$, its interpolants $I_{\phi_\alpha}^\Y f\in V(\phi_\alpha,\Y)$ converge to $f$ in $L_2$ and uniformly.
\end{problem}

%Note that existence of the interpolants $I_{\phi_\alpha}^\Y f$ follow from Theorem \ref{thm_interp_1} in the case that the $(\phi_\alpha)$ are integrable and satisfy conditions (A1) and (A2).

\subsection{Preliminaries}

%In order to move toward recovery results, we introduce  the notation for a few operators whose importance will be revealed shortly.
%, and to incorporate the parameter $\alpha$ to assure convergence in the manner of Problem \ref{PROBRecovery}.  The following criteria satisfy both of these goals.
 With the above considerations in mind, consider the following criteria:
%We move from individual interpolants to families of interpolating generators $(\phi_\alpha)$, where $\alpha\in A\subset[0,\infty)$.  We place the following restrictions on such families.
\begin{enumerate}
\item[(B1)] For every $\alpha\in A$, $\phi_\alpha$ satisfies (A1) and (A2).
\item[(B2)] $C_\Phi:=\sup_{\alpha\in A}\;C_{\phi_\alpha}<\infty$, where $C_{\phi_\alpha}$ is as in condition (A2). % There exists $C>0$ such that $\sup_{\alpha\in A}\;\delta_{\alpha}^{-1}\| \widehat\phi_\alpha \|_{W(L_\infty,\ell_1^\prime)}\leq C$.
% \item[(B3)] $\displaystyle \lim_{\alpha\to\infty} B_{\phi_\alpha}M_{\phi_\alpha}=B_{\psi}M_{\psi}   $ in the strong operator topology on $\mathcal{B}(L_2(\T))$.
% \item[(B3)] $\displaystyle\lim_{\alpha\to\infty}\| \delta_{\phi_\alpha}^{-1}\phi_{\alpha}- \delta_{\psi}^{-1}\psi    \|_{\mathcal{F}W(L_\infty,\ell_1)}=0 $.
% update 4 Aug.
\item[(B3)] $\inflim{\alpha}\sum_{k\neq0}\|T_{\psi,k}A_{\X}^k(M_\psi-M_{\phi_\alpha})g\|_{L_2(\T)}=0$ for every $g\in L_2(\T)$.%For all $g\in L_2(\mathbb{T})$, $\displaystyle \lim_{\alpha\to\infty} \sum_{k\neq 0} \| T_{\psi,k}A^k\left(  M_\psi - M_{\phi_\alpha} \right) g \|_{L_2(\mathbb{T})}^{2} = 0$ and 
\item[(B4)] $\inflim{\alpha}\sum_{k\neq0}\|(T_{\phi_\alpha,k}A_{\Y}^k-T_{\psi,k}A_{\X}^k)M_{\phi_\alpha}g\|_{L_2(\T)}=0$ for every $g\in L_2(\T)$.%$\displaystyle \lim_{\alpha\to\infty} \sum_{k\neq 0} \| \left(T_{\psi,k}-  T_{\phi_\alpha,k}\right)A^k M_{\phi_\alpha}  g \|_{L_2(\mathbb{T})}^{2} = 0$.
\end{enumerate}

Let us stress that while (B3) and (B4) may seem rather abstruse at the moment, stronger hypotheses may be used which imply these conditions but which nonetheless give rise to many examples and additionally are more easily verified in practice.  We will discuss these in more detail in Section \ref{SECExamples}; presently we turn our attention to consequences of these criteria.  Note that if $\X=\Y=\Z$, then these conditions are much easier to handle since $A_\X^k$ and $A_\Y^k$ are the identity on $L_2(\T)$.

Condition (B1) implies that for all $\alpha$, $M_{\phi_\alpha}\in\mathcal{B}(L_2(\T))$, and additionally
\begin{equation}\label{eqn_recovery_2}
%\| M_{\phi_\alpha}g  \|_{L_2[-\pi,\pi]}\leq  \| g \|_{L_2[-\pi,\pi]}.
\|M_{\phi_\alpha}\|\leq1,\quad \alpha\in A.
\end{equation}

Together, (B1) and (B2) show that if $k\neq 0 $, $T_{\phi_\alpha,k}\in\mathcal{B}(L_2(\T))$, with
$\|T_{\phi_\alpha,k}\|\leq C_{\phi_\alpha}\leq C_\Phi,$
and thus are bounded independently of $\alpha$ and $k$.  Moreover, \eqref{EQTphiSumBound} implies that \[\sum_{k\neq0}\|T_{\phi_\alpha,k}g\|_{L_2(\T)}\leq C_\Phi\|g\|_{L_2(\T)},\quad \alpha\in A.\]
Likewise, we have $\|B_{\phi_\alpha}\|\leq C_\Y^2C_\X^2C_\Phi$ and $\|\widetilde{B}_{\phi_\alpha}\|\leq C_\Y^4C_\Phi$.  

If $k=0$, (A2), and hence (B1), provides the bound $\|T_{\phi_\alpha,0}\|\leq \delta^{-1}_{\phi_\alpha}\| \widehat{\phi_\alpha} \|_{L_\infty(\T)}$;  however this bound could well depend on $\alpha$, as will be made more clear in the examples section that follows.

%there are other more concrete (but less broad) ways one might express these conditions in terms of the behavior of the interpolating generators, but we will discuss this at a later time.

 \begin{comment}
On account of (B1) and (B2), there is a constant $C$ such that 
\begin{equation}\label{eqn_recovery_1}
\max\left\{\underset{\alpha\in A}\sup\|B_{\phi_\alpha}\|,\;\underset{\alpha\in A}\sup\;\|\widetilde{B}_{\phi_\alpha}\|\right\}\leq C.
%\| B_{\phi_\alpha}g  \|_{L_2(\T)}\leq C \| g \|_{L_2(\T)},
\end{equation}
%where $C$ is independent of $\alpha$.
\end{comment}
%%%%%%%%
%%%%%%%%
%%%%%%%%
\begin{comment}
Let $M_\phi:L_2(\T)\to L_2(\T)$ be defined by
\[
M_{\phi}g(\xi):= \delta_{\phi} g(\xi)/\phi(\xi).
\]
Condition (B1) guarantees that
\begin{equation}\label{eqn_recovery_2}
%\| M_{\phi_\alpha}g  \|_{L_2[-\pi,\pi]}\leq  \| g \|_{L_2[-\pi,\pi]}.
\|M_{\phi_\alpha}\|\leq1.
\end{equation}
\end{comment}

%%%%%%%%%%%%%%%%%%%%%%%%
\subsection{Recovery}
%%%%%%%%%%%%%%%%%%%%%%%%

With these notions in hand, we are ready to demonstrate our main recovery results.  Let us first note that Lemma \ref{lem_recovery_1} and Proposition \ref{prop_recovery_1} imply the following.

\begin{corollary}\label{cor_recovery_1}
The operators $(I+\widetilde{B}_{\phi_\alpha} M_{\phi_\alpha}):L_{2}(\T)\to L_{2}(\T)$ are invertible and the norms of the inverses are bounded independent of $\alpha$.
\end{corollary}

Additionally, we have the following.

\begin{lemma}\label{LEMSOT}
\emph{(B1)--(B4)} imply that $\widetilde{B}_{\phi_\alpha}M_{\phi_\alpha}-B_\psi M_\psi\to0$ in the SOT on $\mathcal{B}(L_2(\T))$.
\end{lemma}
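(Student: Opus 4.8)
The plan is to show convergence to zero in the strong operator topology by estimating, for a fixed but arbitrary $g\in L_2(\T)$, the quantity $\|(\widetilde{B}_{\phi_\alpha}M_{\phi_\alpha}-B_\psi M_\psi)g\|_{L_2(\T)}$ and proving it tends to $0$ as $\alpha\to\infty$. The first step is to unwind the definitions: writing out
\[
\widetilde{B}_{\phi_\alpha}M_{\phi_\alpha}g = \sum_{k\neq0}A_{\Y}^{*k}\left[T_{\phi_\alpha,k}A_{\Y}^kM_{\phi_\alpha}g\right],\qquad B_\psi M_\psi g = \sum_{k\neq0}A_{\Y}^{*k}\left[T_{\psi,k}A_{\X}^kM_\psi g\right],
\]
I would subtract these and, since both are sums of the form $\sum_{k\neq0}A_{\Y}^{*k}[\,\cdots\,]$, pull out the common outer operator $A_{\Y}^{*k}$ to write the difference as $\sum_{k\neq0}A_{\Y}^{*k}[\,\Delta_k g\,]$ where $\Delta_k g := T_{\phi_\alpha,k}A_{\Y}^kM_{\phi_\alpha}g - T_{\psi,k}A_{\X}^kM_\psi g$.

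The key idea is then a telescoping decomposition of $\Delta_k$ that separates the two hypotheses (B3) and (B4). I would insert and subtract the intermediate term $T_{\psi,k}A_{\X}^kM_{\phi_\alpha}g$, splitting
\[
\Delta_k g = \bigl(T_{\phi_\alpha,k}A_{\Y}^k - T_{\psi,k}A_{\X}^k\bigr)M_{\phi_\alpha}g \;+\; T_{\psi,k}A_{\X}^k\bigl(M_{\phi_\alpha}-M_\psi\bigr)g.
\]
The first bracketed family, summed over $k\neq0$, is exactly controlled by (B4), while the second, summed over $k\neq0$, is exactly (B3) (up to the sign $M_\psi-M_{\phi_\alpha}$, which is immaterial for the norm). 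Using the uniform operator bound $\|A_{\Y}^{*k}\|\leq C_\Y^2$ from the discussion following Theorem \ref{THMRBCIS}, I would bound
\[
\left\|\sum_{k\neq0}A_{\Y}^{*k}[\Delta_k g]\right\|_{L_2(\T)}\leq C_\Y^2\sum_{k\neq0}\|\Delta_k g\|_{L_2(\T)}\leq C_\Y^2\Bigl(S^{(4)}_\alpha(g)+S^{(3)}_\alpha(g)\Bigr),
\]
where $S^{(4)}_\alpha(g)$ and $S^{(3)}_\alpha(g)$ denote the sums appearing in (B4) and (B3) respectively. Both tend to $0$ as $\alpha\to\infty$ by hypothesis, so the whole expression does as well, establishing SOT convergence.

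The main subtlety to handle carefully is the \emph{interchange of the infinite sum over $k$ with the norm}: I need that both series $\sum_{k\neq0}A_{\Y}^{*k}[\cdots]$ converge in $L_2(\T)$ so that the difference can legitimately be rearranged term by term. This is guaranteed by the uniform summability estimate \eqref{EQTphiSumBound} together with (B1)--(B2): for each fixed $\alpha$, the bounds $\sum_{k\neq0}\|T_{\phi_\alpha,k}h\|_{L_2(\T)}\leq C_\Phi\|h\|_{L_2(\T)}$ and $\sum_{k\neq0}\|T_{\psi,k}h\|_{L_2(\T)}\leq C_\psi\|h\|_{L_2(\T)}$, combined with the uniform bounds on $A_{\X}^k$, $A_{\Y}^k$, and $\|M_{\phi_\alpha}\|\leq1$, make each summand absolutely summable in norm, so the rearrangement and the triangle inequality through the sum are justified. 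Once this technical point is dispatched, the telescoping makes the proof essentially a direct application of (B3) and (B4), which is precisely why those conditions were stated in that form.
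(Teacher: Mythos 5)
Your proposal is correct and follows essentially the same route as the paper's proof: bound the difference using the uniform estimate $\|A_{\Y}^{*k}\|\leq C_\Y^2$, insert and subtract the intermediate term $T_{\psi,k}A_{\X}^kM_{\phi_\alpha}g$, and identify the two resulting sums with the hypotheses (B4) and (B3). The only difference is that you explicitly justify the norm-convergence of the series via \eqref{EQTphiSumBound} and the uniform bounds, a point the paper leaves implicit; this is a harmless (and welcome) addition, not a change of method.
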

\begin{proof}
Let $g\in L_2(\T)$.  Then
\begin{displaymath}
\begin{array}{lll}
\|(\widetilde{B}_{\phi_\alpha}M_{\phi_\alpha}-B_\psi M_\psi)g\|_{L_2(\T)} & \leq & C^2_{\Y}\underset{k\neq0}\dsum\|(T_{\phi_\alpha,k}A_{\Y}^kM_{\phi_\alpha}-T_{\psi,k}A_{\X}^k M_{\psi})g\|_{L_2(\T)}\\ 
& \leq & C^2_{\Y}\underset{k\neq0}\dsum\|(T_{\phi_\alpha,k}A_{\Y}^k-T_{\psi,k}A_{\X}^k)M_{\phi_\alpha}g\|_{L_2(\T)} \\ & &+  C^2_{\Y}\underset{k\neq0}\dsum\|T_{\psi,k}A_{\X}^k(M_{\phi_\alpha}-M_{\psi})g\|_{L_2(\T)},\\
\end{array}
\end{displaymath}
and both terms converge to 0 as $\alpha\to\infty$ on account of (B3) and (B4).  The first inequality above stems from the uniform bounds on the prolongation operators $A^{k}_{\Y}$ and their adjoints.
\end{proof}

It should be noted that in general, $M_{\phi_\alpha}$ need {\em not} converge to $M_\psi$ in the SOT on $\mathcal{B}(L_2(\T))$.  An example of this is provided by regular interpolators discussed in Section \ref{SECExamples}.  We may now prove our main recovery result.

\begin{theorem}\label{thm_recovery_1}
Suppose that $\X$ and $\Y$ are CISs for $PW_\pi$, $\psi$ satisfies \emph{(A1)} and \emph{(A2)}, and $(\phi_\alpha)_{\alpha\in A}$ satisfies \emph{(B1)--(B4)}. Then for every $f\in V(\psi,\X)$,
\[
\lim_{\alpha\to\infty}\| f - I_{\phi_\alpha}^\Y f   \|_{L_2(\mathbb{R})}=0,
\]
where $I_{\phi_\alpha}^\Y f$ is the unique element of $V(\phi_\alpha,\Y)$ which interpolates $f$ at $\Y$.
\end{theorem}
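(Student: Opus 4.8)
The plan is to pass to the Fourier side and control $\widehat f - \widehat{I_{\phi_\alpha}^\Y f}$ separately on the central block $\T$ and on the tail $\R\setminus\T$, exploiting the periodization identity
\[
\|f - I_{\phi_\alpha}^\Y f\|_{L_2(\R)}^2 = \|\widehat f - \widehat{I_{\phi_\alpha}^\Y f}\|_{L_2(\T)}^2 + \sum_{k\neq0}\left\|\widehat f(\cdot + 2\pi k) - \widehat{I_{\phi_\alpha}^\Y f}(\cdot + 2\pi k)\right\|_{L_2(\T)}^2
\]
and showing each summand tends to $0$ as $\alpha\to\infty$. Throughout write $g_\alpha := I_{\phi_\alpha}^\Y f$.

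For the central block I would solve for $\widehat{g_\alpha}|_\T$ using Lemma \ref{lem_recovery_1} together with the uniform invertibility from Corollary \ref{cor_recovery_1}. Since $\widehat{g_\alpha} = (I + \widetilde{B}_{\phi_\alpha}M_{\phi_\alpha})^{-1}(I + B_\psi M_\psi)\widehat f$ a.e. on $\T$, subtracting $\widehat f = (I + \widetilde{B}_{\phi_\alpha}M_{\phi_\alpha})^{-1}(I + \widetilde{B}_{\phi_\alpha}M_{\phi_\alpha})\widehat f$ gives
\[
\widehat{g_\alpha} - \widehat f = (I + \widetilde{B}_{\phi_\alpha}M_{\phi_\alpha})^{-1}\left(B_\psi M_\psi - \widetilde{B}_{\phi_\alpha}M_{\phi_\alpha}\right)\widehat f .
\]
As the inverses are bounded independently of $\alpha$ and $\widetilde{B}_{\phi_\alpha}M_{\phi_\alpha} - B_\psi M_\psi \to 0$ in the SOT by Lemma \ref{LEMSOT}, applying the latter to the \emph{fixed} vector $\widehat f|_\T\in L_2(\T)$ forces $\|\widehat{g_\alpha} - \widehat f\|_{L_2(\T)}\to0$.

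For the tail I would first record, by the same multiply-and-divide-by-the-generator periodization used in the earlier proofs, that $\widehat f(\cdot + 2\pi k)|_\T = T_{\psi,k}A_\X^k M_\psi\widehat f$ and $\widehat{g_\alpha}(\cdot+2\pi k)|_\T = T_{\phi_\alpha,k}A_\Y^k M_{\phi_\alpha}\widehat{g_\alpha}$. The key move is the telescoping decomposition
\[
T_{\psi,k}A_\X^k M_\psi\widehat f - T_{\phi_\alpha,k}A_\Y^k M_{\phi_\alpha}\widehat{g_\alpha}
= T_{\psi,k}A_\X^k(M_\psi - M_{\phi_\alpha})\widehat f
+ (T_{\psi,k}A_\X^k - T_{\phi_\alpha,k}A_\Y^k)M_{\phi_\alpha}\widehat f
+ T_{\phi_\alpha,k}A_\Y^k M_{\phi_\alpha}(\widehat f - \widehat{g_\alpha}).
\]
Summing $\|\cdot\|_{L_2(\T)}$ over $k\neq0$ and using $\|\cdot\|_{\ell_2}\le\|\cdot\|_{\ell_1}$, the first sum vanishes by (B3) with $g=\widehat f|_\T$, the second by (B4) with the same $g$, and the third is majorized by $C_\Y^2 C_\Phi\|\widehat f - \widehat{g_\alpha}\|_{L_2(\T)}$ (via $\sum_{k\neq0}\|T_{\phi_\alpha,k}\,\cdot\,\|\le C_{\phi_\alpha}\|\cdot\|\le C_\Phi\|\cdot\|$ from the amalgam definition of $C_{\phi_\alpha}$, together with $\|A_\Y^k\|\le C_\Y^2$ and $\|M_{\phi_\alpha}\|\le1$), which tends to $0$ by the central-block estimate already in hand.

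The main obstacle is precisely this third tail term: since $f$ and its interpolant genuinely live in the different spaces $V(\psi,\X)$ and $V(\phi_\alpha,\Y)$, the hypotheses (B3)--(B4) only govern operators applied to $\widehat f$, never to $\widehat{g_\alpha}$, so one is forced to route the comparison through $M_{\phi_\alpha}\widehat f$ and then absorb the residual $M_{\phi_\alpha}(\widehat f - \widehat{g_\alpha})$ using the uniform summability bound on $(T_{\phi_\alpha,k})_{k\neq0}$ and the central-block convergence. A minor subtlety worth flagging is that (B3)--(B4) control $\ell_1$ sums of norms whereas the tail is an $\ell_2$ sum of squares; this gap is harmless because $\|\cdot\|_{\ell_2}\le\|\cdot\|_{\ell_1}$. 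Combining the two vanishing summands yields $\|f - I_{\phi_\alpha}^\Y f\|_{L_2(\R)}\to0$.
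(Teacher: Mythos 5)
Your proof is correct and follows essentially the same route as the paper: Plancherel's identity, the central-block estimate on $\T$ via Lemma \ref{lem_recovery_1}, Corollary \ref{cor_recovery_1}, and Lemma \ref{LEMSOT}, then a telescoped tail estimate reduced to (B3), (B4), and the already-established central-block convergence, using $\|\cdot\|_{\ell_2}\leq\|\cdot\|_{\ell_1}$ to pass from the $\ell_1$ sums in (B3)--(B4) to the $\ell_2$ tail. Your telescoping is in fact arranged slightly more efficiently than the paper's: by routing through $M_{\phi_\alpha}\widehat f$ you apply (B3) and (B4) directly to the fixed vector $\widehat f$ and are left with a single residual term $T_{\phi_\alpha,k}A_\Y^k M_{\phi_\alpha}(\widehat f-\widehat{I_{\phi_\alpha}^\Y f})$ absorbed by the uniform bounds, whereas the paper telescopes through $\widehat{I_{\phi_\alpha}^\Y f}$ first (its terms $I_{2,2}$ and $I_{2,3}$) and must then add and subtract $\widehat f$ a second time inside each to reach the same three atomic estimates.
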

\begin{proof}
Plancherel's Identity allows us to check this result in the Fourier domain; thus we estimate
\begin{align*}
\| \widehat{f} - \widehat{I_{\phi_\alpha}^\Y f}   \|_{L_2(\mathbb{R})}\leq & \| \widehat{f} - \widehat{I_{\phi_\alpha}^\Y f}   \|_{L_2(\T)}+ \| \widehat{f} - \widehat{I_{\phi_\alpha}^\Y f}   \|_{L_2(\mathbb{R}\setminus\T)}\\
=&: I_1+I_2.
\end{align*}
We begin with $I_1$.  On $\T$, we have
\begin{align*}
\widehat{f}-\widehat{I_{\phi_\alpha}^\Y f} =& \left(I-(I+\widetilde{B}_{\phi_\alpha}M_{\phi_\alpha})^{-1}(I+B_{\psi}M_{\psi})\right)\widehat{f}\\
=&(I+\widetilde{B}_{\phi_\alpha}M_{\phi_\alpha})^{-1}(\widetilde{B}_{\phi_\alpha}M_{\phi_\alpha}-B_{\psi}M_{\psi})\widehat{f}.
\end{align*}
Therefore, Corollary \ref{cor_recovery_1} and Lemma \ref{LEMSOT} imply that $I_1\to0$ as $\alpha\to\infty$.

Next, notice that $I_2$ is majorized by 
$$\sum_{k\neq0}\|T_{\psi,k}A_{\X}^kM_\psi\widehat{f}-T_{\phi_\alpha,k}A_{\Y}^kM_{\phi_\alpha}\widehat{I_{\phi_\alpha}^\Y f}\|_{L_2(\T)},$$ which, in turn, is majorized by $I_{2,1}+I_{2,2}+I_{2,3}$, where
$$I_{2,1}:= \sum_{k\neq0}\|T_{\psi,k}A_{\X}^kM_\psi(\widehat{f}-\widehat{I_{\phi_\alpha}^\Y f})\|_{L_2(\T)},$$
$$I_{2,2}:=\sum_{k\neq0}\|T_{\psi,k}A_{\X}^k(M_\psi-M_{\phi_\alpha})\widehat{I_{\phi_\alpha}^\Y f}\|_{L_2(\T)},$$
and
$$I_{2,3} := \sum_{k\neq0}\|(T_{\psi,k}A_{\X}^k-T_{\phi_\alpha,k}A_{\Y}^k)M_{\phi_\alpha}\widehat{I_{\phi_\alpha}^\Y f}\|_{L_2(\T)}.$$

Notice that $I_{2,1}\leq C_\X^2C_\psi\|\widehat{f}-\widehat{I_{\phi_\alpha}^\Y f}\|_{L_2(\T)}$, which converges to 0 as this is the $I_1$ term above.  Subsequently, $I_{2,2}$ is majorized by 
$$\sum_{k\neq0}\|T_{\psi,k}A_{\X}^k(M_\psi-M_{\phi_\alpha})(\widehat{I_{\phi_\alpha}^\Y f}-\widehat{f})\|_{L_2(\T)}+\sum_{k\neq0}\|T_{\psi,k}A_{\X}^k(M_\psi-M_{\phi_\alpha})\widehat{f}\|_{L_2(\T)},$$
where the first term is majorized by a constant multiple of $\|\widehat{f}-\widehat{I_{\phi_\alpha}^\Y f}\|_{L_2(\T)}$, which is $I_1$, hence converges to 0, and the second term converges to 0 as a result of (B3).

Finally, $I_{2,3}$ is bounded by \begin{multline}
\sum_{k\neq0}\|(T_{\psi,k}A_{\X}^k-T_{\phi_\alpha,k}A_{\Y}^k)M_{\phi_\alpha}(\widehat{I_{\phi_\alpha}^\Y f}-\widehat{f})\|_{L_2(\T)} \\ +\sum_{k\neq0}\|(T_{\psi,k}A_{\X}^k-T_{\phi_\alpha,k}A_{\Y}^k)M_{\phi_\alpha}\widehat{f}\|_{L_2(\T)}.
\nonumber\end{multline}  The first term is bounded above by a constant multiple of $\|\widehat{f}-\widehat{I_{\phi_\alpha}^\Y f}\|_{L_2(\T)}$ by a similar argument to the first term related to $I_{2,2}$, hence converges to 0.  The second term converges to 0 on account of (B4).  Putting these estimates together, we conclude that $I_2\to0$ as $\alpha\to\infty$, whence the conclusion of the theorem.

\end{proof}

\begin{corollary}\label{cor_recovery_4}
With the notations and assumptions of Theorem \ref{thm_recovery_1}, %Suppose that $f\in V(\psi,\X)$, and $(\phi_\alpha)$ satisfies \emph{(B1)--(B4)}, then
\[
\lim_{\alpha\to\infty}| f(x) - I_{\phi_\alpha}^\Y f(x)|=0
\]
uniformly on $\mathbb{R}$ for every $f\in V(\psi,\X)$.
\end{corollary}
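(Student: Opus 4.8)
The plan is to deduce uniform convergence from the $L_2$ convergence already established in Theorem \ref{thm_recovery_1}, by exploiting the fact that every function involved lives in a quasi shift-invariant space and hence has Fourier transform controlled in the amalgam norm. The key observation is that $g := f - I_{\phi_\alpha}^\Y f$ satisfies $\widehat g \in L_1(\R)$, so by Fourier inversion $\|g\|_{L_\infty} \leq \tfrac{1}{\sqrt{2\pi}}\|\widehat g\|_{L_1(\R)}$; thus it suffices to show $\|\widehat{f} - \widehat{I_{\phi_\alpha}^\Y f}\|_{L_1(\R)} \to 0$ as $\alpha\to\infty$.

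First I would split the $L_1(\R)$ norm by periodization, writing $\|\widehat g\|_{L_1(\R)} = \sum_{k\in\Z}\|\widehat g(\cdot+2\pi k)\|_{L_1(\T)}$, and on each fiber pass to $L_2(\T)$ at the cost of the factor $\sqrt{2\pi}$ via Cauchy--Schwarz (Proposition \ref{PROPAmalgamBasic}\textit{(ii)} with $p=1,q=2$). This reduces matters to showing that
\[
\sum_{k\in\Z}\|\widehat g(\cdot+2\pi k)\|_{L_2(\T)} \longrightarrow 0.
\]
The $k=0$ term is exactly the quantity $I_1$ from the proof of Theorem \ref{thm_recovery_1}, which was shown to tend to $0$. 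For the tail $k\neq 0$, I would express $\widehat g(\cdot+2\pi k)$ on $\T$ using the prolongation operators: on $\T+2\pi k$ one has $\widehat f = \widehat\psi(\cdot+2\pi k)A_{\X}^k M_\psi \widehat f$ and similarly $\widehat{I_{\phi_\alpha}^\Y f} = \widehat{\phi_\alpha}(\cdot+2\pi k)A_{\Y}^k M_{\phi_\alpha}\widehat{I_{\phi_\alpha}^\Y f}$, so that $\sum_{k\neq0}\|\widehat g(\cdot+2\pi k)\|_{L_2(\T)}$ is precisely the quantity $I_2$ estimated in the proof of Theorem \ref{thm_recovery_1}, which was already shown to converge to $0$ via the decomposition into $I_{2,1}+I_{2,2}+I_{2,3}$ using (B3) and (B4).

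In other words, the whole sum $\sum_{k\in\Z}\|\widehat g(\cdot+2\pi k)\|_{L_2(\T)} = I_1 + I_2 \to 0$ is exactly what the proof of Theorem \ref{thm_recovery_1} establishes termwise; the present corollary simply repackages those same estimates in the stronger amalgam ($W$) norm rather than the $L_2(\R)$ norm. I would therefore phrase the argument as: the estimates $I_1\to 0$ and $I_2\to0$ from Theorem \ref{thm_recovery_1} give $\|\widehat f - \widehat{I_{\phi_\alpha}^\Y f}\|_{W(L_\infty,\ell_1)}\to0$ (or at least the weaker $W(L_2,\ell_1)$-type sum above), and then invoke Proposition \ref{PROPAmalgamBasic}\textit{(v)} together with $\|g\|_{L_\infty}\leq\tfrac{1}{\sqrt{2\pi}}\|\widehat g\|_{L_1}$ to conclude uniform convergence. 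The main obstacle to watch is bookkeeping: ensuring that the termwise bounds in Theorem \ref{thm_recovery_1} genuinely control the $\ell_1$-sum of the $L_2(\T)$ fiber norms (they do, since each $I_{2,i}$ is already written as a sum over $k\neq0$ of $L_2(\T)$ norms), so that no uniform-in-$k$ interchange of limits is needed beyond what (B3) and (B4) already supply.
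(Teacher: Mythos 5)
Your proposal is correct and is essentially the paper's own proof: bound $\|f-I_{\phi_\alpha}^\Y f\|_{L_\infty(\R)}$ by $\|\widehat f-\widehat{I_{\phi_\alpha}^\Y f}\|_{L_1(\R)}$ via Fourier inversion, periodize and pass to $L_2(\T)$ fiber norms by Cauchy--Schwarz, and recognize the resulting sum as exactly the quantities $I_1$ and $I_2$ already driven to zero in the proof of Theorem \ref{thm_recovery_1}. One bookkeeping remark: your fiber identity should read $\widehat f(\cdot+2\pi k)\big|_{\T} = T_{\psi,k}A_{\X}^k M_\psi\widehat f$ (and analogously with $\phi_\alpha$, $\Y$), since the factor $\delta_\psi^{-1}$ hidden in $T_{\psi,k}$ is needed to cancel the $\delta_\psi$ in $M_\psi$; this is only a constant and does not affect the argument.
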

\begin{proof}
The proof follows directly from the proof of Theorem \ref{thm_recovery_1} by noticing that $\|f-I_{\phi_\alpha}^\Y f\|_{L_\infty(\R)}\leq\|\widehat{f}-\widehat{I_{\phi_\alpha}^\Y f}\|_{L_1(\R)}$, which by periodization and the Cauchy-Schwarz inequality, is majorized by a constant multiple of
$$\|\widehat{f}-\widehat{I_{\phi_\alpha}^\Y f}\|_{L_2(\T)}+\dsum_{k\neq0}\|T_{\psi,k}A_{\X}^kM_\psi\widehat{f}-T_{\phi_\alpha,k}A_{\Y}^kM_{\phi_\alpha}\widehat{I_{\phi_\alpha}^\Y f}\|_{L_2(\T)}.$$  The first term above converges to $0$ by Theorem \ref{thm_recovery_1}, whilst the second is handled as in the proof thereof.

\begin{comment}
is the same as the one given above with the exception of starting by writing things in terms of Fourier transforms and using the Cauchy-Schwarz inequality after periodization.  This yields
\[
| f(x) - I_{\phi_\alpha}f(x)|\leq C\| \widehat{f}-\widehat{I_{\phi_\alpha}f}   \|_{L_2[-\pi,\pi]}+C\sum_{k\neq 0}\| A^k(\hat{f}-\widehat{I_{\phi_\alpha}f})  \|_{L_2[-\pi,\pi]}.
\]
Now proceed as before.  The second term requires multiplying by the appropriate $1$, $\delta_{\psi}/\delta_{\psi}\widehat\psi(\xi)/\widehat\psi(\xi) $ or $\delta_{\phi_\alpha}/\delta_{\phi_\alpha}\widehat{\phi_\alpha}(\xi)/\widehat{\phi_\alpha}(\xi)$, then adding appropriate zeros as before.  Then analogous estimates show that each term tends to $0$.  Since the convergence is independent of $x$, it is uniform on $\mathbb{R}$.
\end{comment}
\end{proof}

For additional convergence phenomena similar to the ones listed in this section, the interested reader is referred to \cite{Hamm,Hamm_Zonotopes,HMNW,L1,Ledford_Bivariate,LMSpline,Madych,siva}.

\section{Examples}\label{SECExamples}
%%%%%%%%%%%%%%%%%%%%%%%%%%%%%
%%%%%%%%%%%%%%%%%%%%%%%%%%%%%
Here we will provide a few examples of the phenomena described above.  Such concrete considerations will also lead us to some alternative statements of the criteria (B3) and (B4) which may be more readily checked in practice.  Our examples are broken up into three cases based on the support of $\widehat\psi$.  The first is when $\supp(\widehat\psi)=\T$ (note that condition (A2) implies that $\supp(\widehat\psi)\supseteq\T$).  The second case is $\supp(\widehat\psi)\subset[-A,A]$ for some $\pi<A<\infty$, and the final case is when the support is not contained in any bounded interval.

\subsection{Case $\supp(\widehat\psi)=\T$ -- Regular Interpolators}
To begin, let us demonstrate that the more general setting here indeed recovers the specific considerations of \cite{L1}; namely, for the case of $V(\sinc,\X)=PW_\pi$, the criteria (B1)--(B4) imply the criteria therein of so-called {\em regular interpolators}.

A family $(\phi_\alpha)_{\alpha\in A}$ is said to be a family of {\em regular interpolators for} $PW_\pi$ provided (B1) and (B2) are satisfied, and in addition, 
\begin{equation}\label{EQRegularInterpolator}
\inflim{\alpha}\dfrac{\delta_{\phi_\alpha}}{\widehat{\phi_\alpha}}=0,\quad\textnormal{a.e. on } \T.
\end{equation}

Note that, in particular, \eqref{EQRegularInterpolator} implies that $M_{\phi_\alpha}\to0$ in the SOT on $\mathcal{B}(L_2(\T))$ as $\alpha\to\infty$ on account of \eqref{eqn_recovery_2} and the dominated convergence theorem.

\begin{theorem}\label{THMRegInt}  If $(\phi_\alpha)$ is a family of regular interpolators for $PW_\pi$ and $\psi$ satisfies \emph{(A1)} and \emph{(A2)} and $\supp(\psihat)=\T$, then $(\phi_\alpha)_{\alpha\in A}$ satisfies \emph{(B1)--(B4)}.  In particular, Theorem \ref{thm_recovery_1} and Corollary \ref{cor_recovery_4} are valid for such families.
\end{theorem}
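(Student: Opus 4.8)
The plan is to show that the regular-interpolator hypotheses, combined with $\supp(\widehat\psi)=\T$, imply each of (B3) and (B4), since (B1) and (B2) are assumed directly in the definition. The starting observation is that when $\supp(\widehat\psi)=\T$, Proposition \ref{PROPSuppPsi}\textit{(i)} gives $V(\psi,\X)=PW_\pi$, so $\widehat f$ is supported in $\T$ and the periodized tails involving $\widehat\psi(\cdot+2\pi k)$ for $k\neq 0$ vanish identically. Concretely, $T_{\psi,k}=\delta_\psi^{-1}\widehat\psi(\cdot+2\pi k)$ is the zero operator for every $k\neq 0$, since $\widehat\psi$ vanishes a.e. off $\T$. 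This is the key simplification: every sum of the form $\sum_{k\neq 0}\|T_{\psi,k}A_\X^k(\cdots)\|_{L_2(\T)}$ collapses to zero before any limit is taken.

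With that in hand, (B3) is immediate: the sum $\sum_{k\neq 0}\|T_{\psi,k}A_\X^k(M_\psi-M_{\phi_\alpha})g\|_{L_2(\T)}$ is identically zero for all $\alpha$ because each $T_{\psi,k}$ ($k\neq 0$) annihilates its argument, so the limit is trivially zero for every $g\in L_2(\T)$. For (B4), the same vanishing removes the $T_{\psi,k}A_\X^k$ piece, reducing the expression to $\sum_{k\neq 0}\|T_{\phi_\alpha,k}A_\Y^k M_{\phi_\alpha}g\|_{L_2(\T)}$. Here I would use the uniform bound $\|A_\Y^k\|\leq C_\Y^2$ together with \eqref{EQTphiSumBound} (valid via (B2) uniformly in $\alpha$) to bound the sum by $C_\Y^2 C_\Phi\|M_{\phi_\alpha}g\|_{L_2(\T)}$. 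The point is then that $M_{\phi_\alpha}g=\delta_{\phi_\alpha}\widehat{\phi_\alpha}^{-1}g\to 0$ in $L_2(\T)$: by \eqref{EQRegularInterpolator} the integrand tends to zero pointwise a.e., and by \eqref{eqn_recovery_2} it is dominated by $|g|\in L_2(\T)$, so the dominated convergence theorem gives $\|M_{\phi_\alpha}g\|_{L_2(\T)}\to 0$. Hence (B4) holds.

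A small subtlety to handle carefully is the interchange of the sum over $k$ with the limit in $\alpha$ for (B4); the cleanest route avoids this entirely by first summing in $k$ (uniformly in $\alpha$) to the single majorant $C_\Y^2 C_\Phi\|M_{\phi_\alpha}g\|_{L_2(\T)}$, and only afterward sending $\alpha\to\infty$. I expect this to be the only genuine care-point; the vanishing of the $k\neq 0$ tails of $\widehat\psi$ does all the heavy lifting elsewhere. Once (B1)--(B4) are verified, the final sentence of the theorem follows with no further work, since Theorem \ref{thm_recovery_1} and Corollary \ref{cor_recovery_4} apply verbatim to any family satisfying (B1)--(B4).
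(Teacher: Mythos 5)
Your proposal is correct and follows essentially the same route as the paper: (B1) and (B2) hold by definition, (B3) is vacuous since $T_{\psi,k}=0$ for $k\neq0$ when $\supp(\widehat\psi)=\T$, and (B4) reduces to the bound $C_\Phi C_\Y^2\|M_{\phi_\alpha}g\|_{L_2(\T)}\to0$, where the SOT convergence $M_{\phi_\alpha}\to0$ via \eqref{EQRegularInterpolator}, \eqref{eqn_recovery_2}, and dominated convergence is exactly the observation the paper records just after defining regular interpolators. Your care about summing in $k$ before taking the limit in $\alpha$ is the same (correct) order of operations implicit in the paper's one-line estimate.
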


\begin{proof}
By definition, (B1) and (B2) are satisfied.  Notice also that condition (B3) is vacuous because of the assumption on the support of $\widehat{\psi}$.  Finally, by the now familiar argument, (B4) may be shown as follows:
$$\dsum_{k\neq0}\|T_{\phi_\alpha,k}A_{\Y}^kM_{\phi_\alpha}g\|_{L_2(\T)}\leq C_\Phi C_\Y^2\|M_{\phi_\alpha}g\|_{L_2(\T)},$$ which tends to $0$ as $\alpha\to\infty$ via the observation that $M_{\phi_\alpha}\to0$ in the SOT on $\mathcal{B}(L_2(\T))$.

\end{proof}

\begin{comment}
\begin{corollary}
Theorem \ref{THMRegInt} recovers Theorem 1 of \cite{L1}.
\end{corollary}
\end{comment}

Consequently, Theorem \ref{THMRegInt} recovers Theorem 1 of \cite{L1}, which constitutes the special case when $\psi$ is the $\sinc$ function, whose Fourier transform is the characteristic function of $\T$.  Also regard that the proof follows from Proposition \ref{PROPSuppPsi}\textit{(i)} in this case given the assumptions on $\psi$.  Examples of regular interpolators may be found in Section 5 of \cite{L1} and Section 8 of \cite{Hamm}, but here we note that a prominent example is the family of Gaussian generators: $\phi_\alpha(x)=e^{-|x/\alpha|^2}$, $\alpha\geq1$.

\subsection{A Class of Convolution Examples for Bandlimited $\psi$}

Here, we focus on the special case when $\psi$ is a generator satisfying (A1) and (A2) such that $\supp(\widehat\psi)\subset[-A,A]$ for some $\pi< A<\infty$.  Note that if $\widehat\psi\in W(L_\infty,\ell_1)$ and $\widehat\psi$ is compactly supported on $[-A,A]$, then $\psi\in PW_A$.  However, not everything in $PW_A$ is the Fourier transform of a function in $W(L_\infty,\ell_1).$  Indeed, consider a function $f$ such that $\widehat f$ is unbounded on $\T$, e.g. $f$ where $\widehat f(\xi) = \xi^{-1/4}\chi_{(0,\pi)}(\xi).$ This $f$ is in $PW_\pi$ because its Fourier transform is square-integrable, but $\widehat f$ fails to be in the amalgam space.

Additionally, we assume that the interpolation set $\Y$ coincides with the set $\X$ defining the space $V(\psi,\X)$ (the reason for this is explained in Section \ref{SECXNotY}). Suppose that $(\phi_\alpha)_{\alpha\in A}$ is a family of generators, each of which satisies (A1) and (A2).   Let
$$\tau_\alpha:=\phi_\alpha\ast\psi,$$
and note that Proposition \ref{PROPAmalgamBasic} implies that the convolution theorem holds, i.e. $\widehat{\tau_\alpha}=\widehat{\phi_\alpha}\widehat\psi$.  Then we have the following:

\begin{proposition}\label{PROPPrimeConditions}
Suppose $\psi$ satisfies \emph{(A1)} and \emph{(A2)}, $\supp(\widehat\psi)\subset[-A,A]$ for some $\pi<A<\infty$, and $\Y=\X$ is a CIS for $PW_\pi$.  Let $N:=\ceiling{\frac{1}{2\pi}A}$.  If $(\phi_\alpha)_{\alpha\in A}$ satisfies \emph{(B1)},
$$\text{\em (B2')}\quad \underset{\alpha\in A}\sup\;\delta_{\phi_\alpha}^{-1}\|\widehat{\phi_\alpha}\|_{ W(L_\infty,\ell_1(\{-N,\dots,-1,1,\dots,N\})}\leq C,$$
and 
$$\text{\em (B3')}\quad \inflim{\alpha}\left\|\delta_{\phi_\alpha}^{-1}\widehat{\phi_\alpha}-\dfrac{\delta_{\phi_\alpha}^{-1}\delta_\psi^{-1}}{\delta_{\tau_\alpha}^{-1}}\right\|_{W(L_\infty,\ell_1(\{-N,\dots,N\}))}=0,$$
then $(\tau_\alpha)_{\alpha\in A}$ satisfies \emph{(B1)--(B4)}, where $\tau_\alpha=\phi_\alpha\ast\psi$.  Consequently, Theorem \ref{thm_recovery_1} and Corollary \ref{cor_recovery_4} hold for $(\tau_\alpha)_{\alpha\in A}$.
\end{proposition}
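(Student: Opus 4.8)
The plan is to verify conditions (B1)--(B4) directly for the convolved family $(\tau_\alpha)$ and then invoke Theorem \ref{thm_recovery_1} and Corollary \ref{cor_recovery_4}. The organizing observation, which I would establish first, is a \emph{support truncation}: since $\supp(\widehat\psi)\subset[-A,A]$ and $N=\ceiling{\frac{1}{2\pi}A}$, one checks that $(\T+2\pi k)\cap[-A,A]=\emptyset$ whenever $|k|>N$, so $\widehat\psi(\cdot+2\pi k)$ vanishes a.e. on $\T$ for $|k|>N$. Because $\widehat{\tau_\alpha}=\widehat{\phi_\alpha}\widehat\psi$ (the convolution theorem applies by Proposition \ref{PROPAmalgamBasic}), the periodizations $\widehat{\tau_\alpha}(\cdot+2\pi k)$ also vanish on $\T$ for $|k|>N$. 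This single fact collapses every sum $\sum_{k\neq0}$ appearing in (B2)--(B4) to the finite range $1\leq|k|\leq N$, which is exactly why the truncated amalgam norms in (B2') and (B3') suffice.

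With this in hand, (B1) and (B2) are routine. For (B1) I would note $\widehat{\tau_\alpha}=\widehat{\phi_\alpha}\widehat\psi\in W(L_\infty,\ell_1)$ by Proposition \ref{PROPAmalgamBasic}\textit{(vi)}, that $\widehat{\tau_\alpha}\geq0$ forces $\tau_\alpha$ to be positive definite (giving (A1)), and that $\delta_{\tau_\alpha}=\inf_\T\widehat{\phi_\alpha}\widehat\psi\geq\delta_{\phi_\alpha}\delta_\psi>0$ together with the finiteness of the numerator sum gives $C_{\tau_\alpha}<\infty$ (giving (A2)). For (B2), using $\|\widehat{\phi_\alpha}\widehat\psi(\cdot+2\pi j)\|_{L_\infty(\T)}\leq\|\widehat\psi(\cdot+2\pi j)\|_{L_\infty(\T)}\|\widehat{\phi_\alpha}(\cdot+2\pi j)\|_{L_\infty(\T)}$ and the lower bound $\delta_{\tau_\alpha}\geq\delta_{\phi_\alpha}\delta_\psi$, I would bound $C_{\tau_\alpha}$ by a $\psi$-dependent constant times $\delta_{\phi_\alpha}^{-1}\|\widehat{\phi_\alpha}\|_{W(L_\infty,\ell_1(\{-N,\dots,-1,1,\dots,N\}))}$, which is uniformly bounded by (B2').

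The substance is in (B3) and (B4), where the role of the expression in (B3') becomes clear (reading $\frac{\delta_{\phi_\alpha}^{-1}\delta_\psi^{-1}}{\delta_{\tau_\alpha}^{-1}}=\delta_{\phi_\alpha}^{-1}\delta_\psi^{-1}\delta_{\tau_\alpha}$). For (B3), since $T_{\psi,k}$ and $A_\X^k$ are uniformly bounded and the sum is finite, it suffices to show $\|(M_\psi-M_{\tau_\alpha})g\|_{L_2(\T)}\to0$; writing $M_\psi-M_{\tau_\alpha}=\frac{1}{\widehat\psi}\bigl(\delta_\psi-\tfrac{\delta_{\tau_\alpha}}{\widehat{\phi_\alpha}}\bigr)$ and using $\widehat\psi\geq\delta_\psi$, $\widehat{\phi_\alpha}\geq\delta_{\phi_\alpha}$ on $\T$, a short manipulation gives the pointwise bound $|M_\psi-M_{\tau_\alpha}|\leq|\delta_{\phi_\alpha}^{-1}\widehat{\phi_\alpha}-\delta_{\phi_\alpha}^{-1}\delta_\psi^{-1}\delta_{\tau_\alpha}|$ on $\T$, whose right side is the $k=0$ term of the norm in (B3') and hence tends to $0$. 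For (B4) I would use $\Y=\X$ (so $A_\Y^k=A_\X^k$) to write $T_{\tau_\alpha,k}-T_{\psi,k}=\widehat\psi(\cdot+2\pi k)\bigl[\delta_{\tau_\alpha}^{-1}\widehat{\phi_\alpha}(\cdot+2\pi k)-\delta_\psi^{-1}\bigr]$; the factor $\widehat\psi(\cdot+2\pi k)$ again truncates the sum to $|k|\leq N$, and the identity
\[
\delta_{\tau_\alpha}^{-1}\widehat{\phi_\alpha}(\cdot+2\pi k)-\delta_\psi^{-1}=\frac{\delta_{\phi_\alpha}}{\delta_{\tau_\alpha}}\Bigl[\delta_{\phi_\alpha}^{-1}\widehat{\phi_\alpha}(\cdot+2\pi k)-\delta_{\phi_\alpha}^{-1}\delta_\psi^{-1}\delta_{\tau_\alpha}\Bigr]
\]
together with the uniform bound $\delta_{\phi_\alpha}/\delta_{\tau_\alpha}\leq\delta_\psi^{-1}$ reduces each summand to a term of the (B3') norm multiplied by $\|\widehat\psi(\cdot+2\pi k)\|_{L_\infty(\T)}$ and $\|M_{\tau_\alpha}g\|_{L_2(\T)}\leq\|g\|_{L_2(\T)}$. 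Summing over the finitely many $k$ and applying (B3') finishes (B4).

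The hard part, I expect, is recognizing that the single quantity appearing in (B3') is engineered to control \emph{simultaneously} the $k=0$ multiplier difference driving (B3) and the $k\neq0$ multiplier differences driving (B4); the two algebraic reductions displayed above are the crux, and everything else is bookkeeping with the uniform bounds $\|A_\X^k\|\leq C_\X^2$ and $\|M_{\tau_\alpha}\|\leq1$. Once (B1)--(B4) are in place, the stated $L_2$ and uniform convergence of $I_{\tau_\alpha}^\Y f$ to $f$ follows immediately from Theorem \ref{thm_recovery_1} and Corollary \ref{cor_recovery_4}.
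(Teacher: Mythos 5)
Your proof is correct and follows essentially the same route as the paper's: both rest on the inequality $\delta_{\tau_\alpha}\geq\delta_{\phi_\alpha}\delta_\psi$, the support truncation that reduces all sums to $1\leq|k|\leq N$, and the same algebraic factorization showing that the quantity in (B3') controls the multiplier differences $M_\psi-M_{\tau_\alpha}$ and $T_{\tau_\alpha,k}-T_{\psi,k}$. The only difference is packaging: the paper states one master estimate, $\|\delta_{\tau_\alpha}^{-1}\widehat{\tau_\alpha}-\delta_\psi^{-1}\widehat\psi\|_{W}\leq\delta_\psi^{-1}\|\widehat\psi\|_{W}\,\|\delta_{\phi_\alpha}^{-1}\widehat{\phi_\alpha}-\delta_{\phi_\alpha}^{-1}\delta_\psi^{-1}\delta_{\tau_\alpha}\|_{W}$, and asserts that it yields (B3) and (B4), whereas you carry out those two deductions explicitly via the $k=0$ and $k\neq0$ pieces of the same bound.
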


\begin{comment}
\begin{remark}
Note that if $\widehat\psi\in W(L_\infty,\ell_1)$ and $\widehat\psi$ is compactly supported on $[-A,A]$, then $\psi\in PW_A$.  However, not everything in $PW_A$ is the Fourier transform of a function in $W(L_\infty,\ell_1).$  Indeed, consider a function $f$ such that $\widehat f$ is unbounded on $\T$, e.g. $f$ where $\widehat f(\xi) = \xi^{-1/4}\chi_{(0,\pi)}.$ This $f$ is in $PW_\pi$ because its Fourier transform is square-integrable, but $\widehat f$ fails to be in the amalgam space.
\end{remark}
\end{comment}

\begin{proof}%[Proof of Proposition \ref{PROPPrimeConditions}]
Note that (A1) for $\tau_\alpha$ follows %from Proposition \ref{PROPAmalgamBasic}\textit{(vi)} and 
from the fact that (A1) holds for $\phi_\alpha$ and $\psi$.  For (A2), note that
\begin{equation}\label{EQdelta_inequality}
\delta_{\tau_\alpha}\geq\delta_{\phi_\alpha}\delta_\psi.
\end{equation}
Thus, to check (B2) for $(\tau_\alpha)$, we need only notice that $\|\widehat{\tau_\alpha}(\cdot+2\pi k)\|_{L_\infty(\T)}\leq\|\widehat{\phi_\alpha}(\cdot+2\pi k)\|_{L_\infty(\T)}\|\widehat\psi(\cdot+2\pi k)\|_{L_\infty(\T)}$, whence applying \eqref{EQdelta_inequality} and the fact that for $\ell_1$ sequences, $\|ab\|_{\ell_1}\leq\|a\|_{\ell_1}\|b\|_{\ell_1}$, yields
$$\delta_{\tau_\alpha}^{-1}\|\widehat{\tau_\alpha}\|_{W'}\leq\delta_{\phi_\alpha}^{-1}\delta_\psi^{-1}\|\widehat{\phi_\alpha}\|_{W'}\|\widehat\psi\|_{W'},$$
which is bounded by a constant $C$ independent of $\alpha$ on account of the fact that $(\phi_\alpha)$ satisfies (B2') (here we have abbreviated the amalgam space in question to $W'$ for brevity).

Next we check (B3) and (B4).  Notice that 
\begin{displaymath}
\begin{array}{lll}
|\delta_{\tau_\alpha}^{-1}\widehat{\tau_\alpha}-\delta_\psi^{-1}\widehat{\psi}| & = & |\delta_{\tau_\alpha}^{-1}\widehat{\phi_\alpha}\widehat{\psi}-\delta_\psi^{-1}\widehat{\psi}|\\
\\
& = & |\widehat{\psi}|\delta_{\tau_\alpha}^{-1}\left|\widehat{\phi_\alpha}-\frac{\delta_\psi^{-1}}{\delta_{\tau_\alpha}^{-1}}\right|\\
\\
& \leq & |\widehat{\psi}|\delta_{\phi_\alpha}^{-1}\delta_{\psi}^{-1}\left|\widehat{\phi_\alpha}-\frac{\delta_\psi^{-1}}{\delta_{\tau_\alpha}^{-1}}\right|\\
\\
& \leq & \delta_\psi^{-1}|\widehat{\psi}|\left|\delta_{\phi_\alpha}^{-1}\widehat{\phi_\alpha}-\frac{\delta_{\phi_\alpha}^{-1}\delta_\psi^{-1}}{\delta_{\tau_\alpha}^{-1}}\right|,\\
\end{array}
\end{displaymath}
where the first inequality follows from \eqref{EQdelta_inequality}.

Consequently, letting $W:=W(L_\infty,\ell_1(\{-N,\dots,N\}))$,
$$\|\delta_{\tau_\alpha}^{-1}\widehat{\tau_\alpha}-\delta_\psi^{-1}\widehat\psi\|_{W}\leq \delta_\psi^{-1}\|\widehat\psi\|_{W}\left\|\delta_{\phi_\alpha}^{-1}\widehat{\phi_\alpha}-\frac{\delta_{\phi_\alpha}^{-1}\delta_\psi^{-1}}{\delta_{\tau_\alpha}^{-1}}\right\|_{W}.$$
To conclude the proof, it suffices to notice that the above inequality together with (B3') implies both (B3) and (B4) for $(\tau_\alpha)$.  In particular, (B3') implies that $M_{\tau_\alpha}\to M_{\psi}$ in the SOT on $\mathcal{B}(L_2(\T))$. 
\end{proof}

\begin{remark}\label{REMB3}
As a special case of this, suppose that $\delta_{\tau_\alpha}=\delta_{\phi_\alpha}\delta_\psi$, which can happen e.g. if $\widehat{\phi_\alpha}$ and $\widehat{\psi}$ are even and decreasing on $[0,\pi]$.  In this case, \emph{(B3')} reduces to the statement that $\|\delta_{\phi_\alpha}^{-1}\widehat{\phi_\alpha}-1\|_{W(L_\infty,\ell_1(\{-N,\dots,N\}))}\to0$ as $\alpha\to\infty$.
\end{remark}

\begin{remark}
It should also be noticed that a family $(\phi_\alpha)$ satisfying \emph{(B2')} need not satisfy \emph{(B2)} as the following example will illustrate.  That is, $\delta_{\phi_\alpha}^{-1}\|\widehat{\phi_\alpha}\|_{W(L_\infty,\ell_1')}$ need not be uniformly bounded for $\alpha\in A$ since the proof of Proposition \ref{PROPPrimeConditions} only requires a finite number of terms in the amalgam norm.
\end{remark}

\begin{remark}\label{REMLinfty}
In Proposition \ref{PROPPrimeConditions}, the assumption that $(\phi_\alpha)$ satisfies (B1) can be relaxed.  If for all $\alpha$, we have $\widehat{\phi_\alpha}\in L_\infty(\mathbb{R})$, with $\widehat{\phi_\alpha}(\xi)\geq 0$ on $\mathbb{R}$ and $\widehat{\phi_\alpha}(\xi)>0$ on $\T$.  Then $(\tau_\alpha)$ clearly satisfies (A1) and (A2), and we have
\[
C_{\tau_\alpha}\leq \delta^{-1}_{\phi_\alpha}\|\widehat\phi_\alpha \|_{L_\infty(\mathbb{R})}C_\psi,
\]
thus to account for (B2), we impose the additional hypothesis that
\[
\widetilde{C}_{\Phi}:=\sup_{\alpha\in A}\delta^{-1}_{\phi_\alpha}\|\widehat\phi_\alpha \|_{L_\infty(\mathbb{R})} <\infty.
\]
For an illustration of this, see Example \ref{EXApproximateIdentity} below. 
\end{remark}

\begin{example}[Convolution with the Poisson Kernel]\label{EXPoissonConvolution}
Let $\tau_\alpha(x) := e^{-\alpha|\cdot|}\ast\psi$ with $\psi$ such that $\delta_{\tau_\alpha}=\delta_{\phi_\alpha}\delta_\psi$ for each $\alpha$.   Then $$\widehat{\tau_\alpha}(\xi) = \widehat{\phi_\alpha}(\xi)\widehat{\psi}(\xi) = \sqrt{\frac{2}{\pi}}\frac{\alpha}{\alpha^2+\xi^2}\widehat{\psi}(\xi).$$  Evidently, $\delta_{\phi_\alpha} = \sqrt{\frac{2}{\pi}}\frac{\alpha}{\alpha^2+\pi^2}$, and for $k\neq 0$, $$\|\widehat{\phi_\alpha}(\cdot+2\pi k)\|_{L_\infty(\T)} = \sqrt{\frac{2}{\pi}}\frac{\alpha}{\alpha^2+(2|k|-1)^2\pi^2}.$$  Therefore,
\begin{equation}\label{EQPoissonDelta}\delta_{\phi_\alpha}^{-1}\underset{k\neq0}\dsum\|\widehat{\phi_\alpha}(\cdot+2\pi k)\|_{L_\infty(\T)} = \underset{k\neq0}\dsum \dfrac{\alpha^2+\pi^2}{\alpha^2+(2|k|-1)^2\pi^2}.
\end{equation}
Note that the series on the right hand side of \eqref{EQPoissonDelta} is increasing as $\alpha$ increases, and moreover each term tends to 1 as $\alpha\to\infty$.  Consequently, $\delta_{\phi_\alpha}^{-1}\|\widehat{\phi_\alpha}\|_W$ is not bounded above by a constant independent of $\alpha$.  However, $\delta_{\tau_\alpha}^{-1}\|\widehat{\tau_\alpha}\|_W$ is since $\widehat\psi$ is compactly supported.  Indeed, if $\supp(\widehat{\psi})\subset[-N,N]$, then \eqref{EQPoissonDelta} implies that
$$\inflim{\alpha}\delta_{\phi_\alpha}^{-1}\finsum{k}{-N}{N}\|\widehat{\phi_\alpha}(\cdot+2\pi k)\|_{L_\infty(\T)}= 2N+1,$$ which implies (B2').

To verify (B3'), we use Remark \ref{REMB3} and simply notice that $\inflim{\alpha}\frac{\alpha^2+\pi^2}{\alpha^2+\xi^2}=1$ uniformly in $\xi$ on $\T+2\pi k$ for any $k\in\{-N,\dots,N\}$.

\begin{comment}One can check (B2) easily enough by hand (note in this case, I don't think that $\delta_{\phi_\alpha}^{-1}\|\phi_\alpha\|_{\mathcal{F}W}$ is bounded by a constant independent of $\alpha$; using the integral test you get something like $\alpha(c-\arctan(\pi/\alpha))$).  Nonetheless, $\delta_{\tau_\alpha}^{-1}\|\tau_\alpha\|_{\mathcal{F}W}\leq C$ independent of $\alpha$.  Then we may use Remark 6 to check the easier condition (B3''), noting in this case that $\delta_{\tau_\alpha} = \delta_{\phi_\alpha}\delta_\psi = \widehat{\phi_\alpha}(\pi)\pi.$  As before, there are only three terms to check.  First, when $k=0$, we have
$|\delta_{\phi_\alpha}^{-1}\widehat{\phi_\alpha}(\xi)-1| = |\frac{\alpha^2+\pi^2}{\alpha^2+\xi^2}-1|$ which tends to 0 as $\alpha\to\infty$.  The $|k|=1$ terms follow similarly.
\end{comment}
\end{example}

\begin{example}[Convolution with the Gaussian Kernel]\label{EXGaussianConvolution}
Similar to Example \ref{EXPoissonConvolution}, we obtain the same convergence results when $\tau_\alpha = e^{-\alpha|\cdot|^2}\ast\psi$, letting $\alpha\to\infty$.  It should be noted that the parameter here is the opposite as in the regular interpolators case, where the Gaussian parameter limits to $0$.  In both of these examples, convolution is used to smooth out the generator with something that decays rapidly.
\end{example}

\begin{example}[Convolution with Inverse Multiquadrics]
In the vein of Examples \ref{EXPoissonConvolution} and \ref{EXGaussianConvolution}, consider $\tau_\alpha:=\phi_\alpha\ast\psi$ where $\phi_\alpha(x):=(x^2+1)^{-\alpha}$ is the inverse multiquadric of exponent $\alpha$, and we let $\alpha\to\infty$.  Then from \cite{Jones},
$$\widehat{\phi_{\alpha}}(\xi) = \sqrt{2\pi}\frac{2^{1-\alpha}}{\Gamma(\alpha)}|\xi|^{\alpha-\frac{1}{2}}K_{\alpha-\frac{1}{2}}(|\xi|),$$
where $K_\nu$ is the modified Bessel function of the second kind (see \cite[p. 376]{AS} for the precise definition).

Since $\widehat{\phi_{\alpha}}$ is decreasing, $\delta_{\tau_\alpha}=\widehat{\phi_\alpha}(\pi)$ as before.  The other conditions being easily checked, let us consider (B3').  It suffices to check for $k=-N,\dots,N$ that $|\delta_{\phi_\alpha}^{-1}\widehat{\phi_\alpha}(\xi+2\pi k)-1|\to0$ uniformly for $\xi\in\T$.  Notice that
$$\dfrac{\widehat{\phi_\alpha}(\xi+2\pi k)}{\widehat{\phi_\alpha}(\pi)} = \dfrac{|\xi+2\pi k|^{\alpha-\frac{1}{2}}K_{\alpha-\frac{1}{2}}(|\xi+2\pi k|)}{\pi^{\alpha-\frac{1}{2}}K_{\alpha-\frac{1}{2}}(\pi)}.$$
To consider the limit as $\alpha\to\infty$, we need to know the asymptotic behavior of the modified Bessel function of the second kind with respect to its order.  From \cite{Sidi}, we find that
\[
K_\nu(z) \sim 2^{\nu-1}\Gamma(\nu)z^{-\nu},\quad\nu\to\infty,
\]
where the notation $f(x)\sim g(x)$, $x\to\infty$ means that $\inflim{x}f(x)/g(x)=1$. Thus, setting $\nu=\alpha-1/2$, we have that
$$\dfrac{\widehat{\phi_\alpha}(\xi+2\pi k)}{\widehat{\phi_\alpha}(\pi)}\sim \dfrac{|\xi+2\pi k|^\nu2^{\nu-1}\Gamma(\nu)|\xi+2\pi k|^{-\nu}}{\pi^\nu2^{\nu-1}\Gamma(\nu)\pi^{-\nu}} = 1,\quad \nu\to\infty,$$
which yields (B3').
\end{example}

\begin{example}[Convolution with Approximate Identities]\label{EXApproximateIdentity}
Finally, we may consider a large class of approximate identities.  These are similar in spirit to the examples presented thus far, but the previous examples are not necessarily approximate identities of the form considered here.  Again make the assumption that $\widehat\psi$ is compactly supported on $[-A,A]$, with $N:=\ceiling{A}$, and that $\X=\Y$.  For our purposes, an \textit{approximate identity} is a function $\phi$ with $\widehat\phi>0$ on $\R$, with $\phi,\widehat\phi\in L_1$, and $\int_\R\phi(x)dx=1$. % We make the additional assumption that either $\widehat\phi$ and $\widehat\psi$ are even and decreasing, or $\widehat\phi(0)=1$  (note that Bochner's Theorem tells us that $\phi>0$ on $\R$ implies $\widehat\phi(0)=1$).  
Then set $\phi_\alpha(x):=\alpha\phi(\alpha x)$, and we find that $\tau_\alpha:=\phi_\alpha\ast\psi$ satisfies (B1)--(B4).

To check that $(\tau_\alpha)_{\alpha\geq1}$ satisfies (B1)--(B4), first note that by Remark \ref{REMLinfty}, (B1) is satisfied; therefore we simply verify that $(\phi_\alpha)_{\alpha\geq1}$ satisfies (B2') and (B3').  Recalling that $\widehat{\phi_\alpha}(\xi)=\widehat\phi(\xi/\alpha)$, it follows that the quantity $\delta_{\phi_\alpha}:=\underset{\xi\in\T}\inf|\widehat{\phi}(\xi/\alpha)|$ is non-decreasing as $\alpha$ increases, and hence $\delta_{\phi_\alpha}^{-1}\leq\delta_{\phi}^{-1}.$

%Next, note that for a fixed $j\in\{-N,\dots,-1,1,\dots,N\}$, $$\underset{\xi\in\T}\sup|\widehat{\phi_\alpha}(\xi+2\pi j)| = \underset{\xi\in\T}\sup\left|\widehat{\phi}\left(\frac{\xi+2\pi j}{\alpha}\right)\right|\to|\widehat{\phi}(0)|,$$ as $\alpha\to\infty$. 
Next, note that since $\widehat\phi\in L_1$, the inversion formula holds, and we have that $|\widehat\phi(\xi)|\leq\|\phi\|_{L_1}$ for almost every $\xi\in\R$.  Consequently,  $$\finsum{j}{-N}{N}\left\|\widehat{\phi}\left(\frac{\cdot+2\pi j}{\alpha}\right)\right\|_{L_\infty(\T)}\leq (2N+1)\|\phi\|_{L_1}.$$
Combining this with the previous observation about $\delta_{\phi_\alpha}^{-1}$ yields the conclusion of (B2').

To see (B3'), note that $\delta_{\phi_\alpha}\to\widehat\phi(0)$ as $\alpha\to\infty$, and that for any fixed $j\in\{-N,\dots,N\}$, $\widehat\phi((\xi+2\pi j)/\alpha)\to\widehat\phi(0)$.  Consequently, we have $\|\delta_{\phi_\alpha}^{-1}\widehat{\phi}-1\|_{W}\to0$.  Moreover, the amalgam norm of the constant $\delta_{\tau_\alpha}/(\delta_{\phi_\alpha}\delta_\psi)-1$ goes to $0$ as well by a similar argument.  Thus by the triangle inequality, (B3') is satisfied. 

\end{example}

\subsection{Non-bandlimited $\psi$}

Let us now consider the case when $\widehat\psi$ is not compactly supported, and again $\X=\Y$.  Then if $\tau_\alpha=\phi_\alpha\ast\psi$ where $(\phi_\alpha)$ satisfies (B1) and modified versions of (B2') and (B3') where the amalgam norms therein are taken to be $W(L_\infty,\ell_1')$ and $W(L_\infty,\ell_1)$, respectively, the conclusion of Proposition \ref{PROPPrimeConditions} holds for $(\tau_\alpha)$. It should be noted that in this case, the Gaussian of Example \ref{EXGaussianConvolution} still yields recovery, but convolution with the Poisson kernel does not because then the family $(\phi_\alpha)_{\alpha\in A}$ does not satisfy the uniform bound in (B2') when the sum is infinite (see the discussion in Example \ref{EXPoissonConvolution}). 

%%%%%%%%%%%%%%
\subsection{Interpolation at $\X\neq\Y$}\label{SECXNotY}
%%%%%%%%%%%%%%
It is pertinent to examine the case when $\supp(\widehat\psi)\supsetneq\T$, and $\Y\neq\X$.  Unfortunately, recovery turns out to not be generally feasible, a fact we record in the following proposition.

\begin{proposition}\label{PROPYnotX}
There exist $\psi$ with $\supp(\widehat\psi)\supsetneq\T$, and $\Y\neq\X$ CISs for $PW_\pi$ such that there is no family $(\phi_\alpha)_{\alpha\in A}$ satisfying \emph{(B1)} for which the interpolants $I_{\phi_\alpha}^\Y f\in V(\phi_\alpha,\Y)$ converge in $L_2$ and uniformly to $f$ for all $f\in V(\psi,\X)$.
\end{proposition}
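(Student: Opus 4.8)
The plan is to exploit a rigidity that condition (A1) forces on any admissible interpolant: since every $\phi_\alpha$ is positive definite, $\widehat{\phi_\alpha}$ is real and nonnegative, so the Fourier transform of an interpolant from $V(\phi_\alpha,\Y)$ carries \emph{the same phase} on every period $\T+2\pi k$. I would contrast this with a single target function whose Fourier transform genuinely rotates its phase from one period to the next -- a twist that is possible precisely because $\supp(\widehat\psi)\supsetneq\T$ and the offending translate lives in $\X$ but not in $\Y$. Only $L_2$ convergence needs to be contradicted, since the hypothesis asserts convergence in $L_2$ \emph{and} uniformly.

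Concretely, I would fix a real, even $\psi$ whose transform $\widehat\psi$ is continuous, nonnegative, supported on $[-3\pi,3\pi]$, and strictly positive on $(-3\pi,3\pi)$. This satisfies (A1) and (A2): compact support forces $\widehat\psi(\cdot+2\pi k)\equiv0$ on $\T$ for $|k|\geq2$, so $\widehat\psi\in W(L_\infty,\ell_1)$, and continuity with positivity on $[-\pi,\pi]$ gives $\delta_\psi>0$; crucially, $\xi+2\pi\in(\pi,3\pi)$ for $\xi\in\T$ yields $\widehat\psi(\xi)>0$ and $\widehat\psi(\xi+2\pi)>0$ simultaneously for a.e.\ $\xi\in\T$. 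I then take $\Y=\Z$ and $\X=(\Z\setminus\{1\})\cup\{\sqrt2\}$ as in Proposition \ref{PROPSuppPsi}\textit{(ii)}, and set $f=\psi(\cdot-\sqrt2)\in V(\psi,\X)$, so that $\widehat f(\xi)=\widehat\psi(\xi)e^{-i\sqrt2\xi}$.

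Then I would argue by contradiction. Suppose some $(\phi_\alpha)$ satisfies (B1) with $I_{\phi_\alpha}^\Y f\to f$ in $L_2(\R)$; the interpolant exists and is unique by Theorem \ref{thm_interp_1}. Writing $\widehat{I_{\phi_\alpha}^\Y f}=\widehat{\phi_\alpha}\,u_\alpha$ with $u_\alpha=\sum_j a_j e^{-ij\cdot}$ (which is $2\pi$-periodic since $\Y=\Z$), the $L_2(\R)$ convergence restricts to $L_2$ convergence on $\T$ and on $\T+2\pi$. Passing to a nested subsequence $(\alpha_n)$ along which both hold pointwise a.e., I obtain for a.e.\ $\xi\in\T$
\[
\widehat{\phi_{\alpha_n}}(\xi)\,u_{\alpha_n}(\xi)\to\widehat\psi(\xi)e^{-i\sqrt2\xi},\qquad \widehat{\phi_{\alpha_n}}(\xi+2\pi)\,u_{\alpha_n}(\xi)\to\widehat\psi(\xi+2\pi)e^{-i\sqrt2(\xi+2\pi)}.
\]
Fix such a $\xi$ with $\widehat\psi(\xi),\widehat\psi(\xi+2\pi)>0$. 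Both limits are nonzero, so for large $n$ we have $u_{\alpha_n}(\xi)\neq0$ and, since $\widehat{\phi_{\alpha_n}}\geq0$, also $\widehat{\phi_{\alpha_n}}(\xi+2\pi)>0$. Because $\widehat{\phi_{\alpha_n}}(\xi)$ and $\widehat{\phi_{\alpha_n}}(\xi+2\pi)$ are positive scalars, the unit vector $u_{\alpha_n}(\xi)/|u_{\alpha_n}(\xi)|$ equals the normalization of either product, so it converges simultaneously to $e^{-i\sqrt2\xi}$ and to $e^{-i\sqrt2(\xi+2\pi)}$. Hence $e^{-2\sqrt2\pi i}=1$, forcing $\sqrt2\in\Z$, a contradiction.

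The delicate step is the phase comparison in the last paragraph, and the plan is to keep it rigorous without ever dividing by the (possibly small or vanishing) factors: I normalize $u_{\alpha_n}(\xi)$ to the unit circle and use only that the $\widehat{\phi_{\alpha_n}}$ factors are positive reals, which is exactly what positive definiteness supplies. The remaining points are routine: the nested a.e.-convergent subsequence extraction from two $L_2$ limits, and the verification that the chosen $\psi$ meets (A1)--(A2) while $\X\neq\Y$ are genuine CISs for $PW_\pi$, for which I defer to Proposition \ref{PROPSuppPsi}\textit{(ii)}. I expect no obstacle beyond bookkeeping, as the mechanism -- real nonnegative $\widehat{\phi_\alpha}$ cannot reproduce the irrational inter-period phase $e^{-2\sqrt2\pi i}$ carried by $\widehat f$ -- is forced by the hypotheses.
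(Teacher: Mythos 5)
Your proposal is correct, but it takes a genuinely different route from the paper. The paper's proof runs through the cardinal-function machinery of Section \ref{sect_cardinal}: it takes $\psi$ to be the Gaussian, uses Theorem \ref{THMShiftSpaceEquality} to rewrite $I_{\phi_\alpha}^\Z f$ as $\sum_j f(j)L_{\phi_\alpha}(\cdot-j)$, tests the hypothesis against $f=\psi$ together with the Poisson Summation Formula to deduce $L_{\phi_\alpha}\to L_\psi$ in $L_2$, and then concludes that the interpolants of $f_1=\psi(\cdot-\sqrt2)$ converge to an element of the closed subspace $V(\psi,\Z)$, contradicting $f_1\in V(\psi,\X)\setminus V(\psi,\Z)$. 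Your argument short-circuits all of this: writing $\widehat{I_{\phi_\alpha}^\Z f}=\widehat{\phi_\alpha}u_\alpha$ with $u_\alpha$ a $2\pi$-periodic $L_2(\T)$ function, you observe that positive definiteness makes $\widehat{\phi_\alpha}\geq0$ a.e., so the phase of the interpolant's transform at $\xi$ and at $\xi+2\pi$ is the \emph{same} (namely $\arg u_\alpha(\xi)$), while $\widehat{f}=\widehat\psi\,e^{-i\sqrt2\,\cdot}$ twists its phase by $e^{-2\pi i\sqrt2}\neq1$ between bands; an a.e.-convergent subsequence on $\T$ and $\T+2\pi$ then yields $e^{-i\sqrt2\xi}=e^{-i\sqrt2(\xi+2\pi)}$, forcing $\sqrt2\in\Z$. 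Both proofs ultimately rest on the same incompatibility (periodic multiplier versus the nonperiodic phase $e^{-i\sqrt2\xi}$ -- the same mechanism behind Proposition \ref{PROPSuppPsi}\textit{(ii)}), but each buys something different: the paper's route identifies the actual limit of the interpolants and ties the counterexample to the structure theory of cardinal functions, whereas yours is more elementary and self-contained (no Poisson summation, no Theorem \ref{THMShiftSpaceEquality}, no Proposition \ref{PROPCardinalConvergence}), needs the convergence hypothesis for only the single function $f=\psi(\cdot-\sqrt2)$ rather than two test functions, and isolates exactly which hypothesis does the work. Two housekeeping points: your pointwise selection of $\xi$ must also lie in the countable intersection of the full-measure sets where each $\widehat{\phi_{\alpha_n}}\geq0$ (you gesture at this, but it should be said), and since $A$ is an abstract directed index set you should first extract a sequence $\alpha_n$ with $\|I_{\phi_{\alpha_n}}^\Z f-f\|_{L_2}<1/n$ before speaking of subsequences; both are routine.
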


As of yet, we do not have enough information at our disposal to provide the proof, but we return to the matter at the end of Section \ref{sect_cardinal}.

%%%%%%%%%%%%%%%%%%%%%%%%%%%%%%%%%%%%%%%%%%%%%%%%%%
%%%%%%%%%%%%%%%%%%%%%%%%%%%%%%%%%%%%%%%%%%%%%%%%%%
\section{Cardinal Functions}\label{sect_cardinal}
%%%%%%%%%%%%%%%%%%%%%%%%%%%%%%%%%%%%%%%%%%%%%%%%%%
%%%%%%%%%%%%%%%%%%%%%%%%%%%%%%%%%%%%%%%%%%%%%%%%%%

In this section, we analyze the special case when $\mathcal{X}=\Z$, in which case the space $V(\psi):=V(\psi,\Z)$ is called the {\em principal shift-invariant} space associated with the generator $\psi$ -- an  object of extensive study in many areas of harmonic analysis, approximation theory, and functional analysis.   We still make the assumptions (A1) and (A2) on $\psi$, and in what follows, assume that $(\phi_\alpha)_{\alpha\in A}$ is a one-parameter family of generators satisfying (B1)-(B4).  

Cardinal interpolation arose from the penetrating work of I. J. Schoenberg on spline interpolation \cite{SchoenbergQuarterly,Schoenberg}, and from summability methods for the sampling series found in the WKS sampling formula.  Recall that if $f\in PW_\pi$, then \begin{equation}\label{EQSampling}
f(x) = \underset{j\in\Z}\dsum f(j)\,\sinc(x-j),\end{equation} where $\sinc(x) = \frac{\sin(\pi x)}{\pi x}$ if $x\neq0$, and $\sinc(0)=1$.  Convergence of the series in \eqref{EQSampling} was later shown to be both in the sense of $L_2(\R)$ and uniform on $\R$.  But to E. T. Whittaker \cite{Whittaker}, \eqref{EQSampling} was an equation of interpolation, i.e. clearly $f(k) = \sum_{j\in\Z}f(j)\,\sinc(k-j)$ for $k\in\Z$ since $\sinc(n)=\delta_{0,n}$.

However, the sinc series converges slowly in the sense that $\sinc(x)=O(|x|^{-1})$.  Consequently, many authors, including Schoenberg, have sought to replace sinc with another {\em cardinal function} which has the property that $L(n)=\delta_{0,n}$, $n\in\Z$, but which decays more rapidly than sinc, hence the term summability method. There are of course other ways around use of the sinc kernel; for example, the generalized sampling kernels of Butzer, Ries, and Stens \cite{Butzer}, but we restrict our attention here to cardinal function methods. There are many examples of such cardinal functions and their associated decay rates \cite{Baxter,Buhmann,Buhmann_Book,BuhmannMQ,HL1,HL2, Ledford_Cardinal,MadychMisc}.  Of primary interest to us is their construction from a given function as follows.  

Given $\phi$, formally define
\begin{equation}\label{EQCardinalDef}
\widehat{L_\phi}(\xi):=\dpifactor\dfrac{\widehat{\phi}(\xi)}{\underset{j\in\Z}\dsum\widehat{\phi}(\xi+2\pi j)}.
\end{equation}
Under certain conditions (for example, if $\widehat{L_\phi}\in L_1\cap L_2$) the inverse Fourier transform, $L_\phi$, will be a cardinal function which satisfies $L_\phi(k)=\delta_{0,k}$, $k\in\Z$.  Indeed, one needs only justify the following formal calculation:
\begin{align*}
L_\phi(k) & = \dfrac{1}{\sqrt{2\pi}}\underset{j\in\Z}\sum\int_\T \dfrac{\widehat{\phi}(\xi+2\pi j)}{\underset{m\in\Z}\sum\widehat{\phi}(\xi+2\pi m)}e^{i(\xi+2\pi j)k}d\xi\\ & = \dfrac{1}{\sqrt{2\pi}}\int_\T\dfrac{\underset{j\in\Z}\sum\widehat{\phi}(\xi+2\pi j)}{\underset{m\in\Z}\sum\widehat{\phi}(\xi+2\pi m)}e^{i\xi k}d\xi\\ & = \delta_{0,k}.
\end{align*}

Consequently, if the family of interpolators is made from convolution with the generator $\psi$, i.e. $\tau_\alpha(x) = \phi_\alpha\ast\psi(x)$, then the Fourier transform of the cardinal function is
$$\widehat{L_{\tau_\alpha}}(\xi) = \dpifactor\dfrac{\widehat{\phi_\alpha}(\xi)\widehat{\psi}(\xi)}{\underset{j\in\Z}\dsum\widehat{\phi_\alpha}(\xi+2\pi j)\widehat{\psi}(\xi+2\pi j)}.$$

Moving on to more general cardinal functions, there are a couple of natural questions that arise.  The first is, does $L_\phi$ satisfy (A1) and (A2)?  Provided that $\phi$ itself does, then the answer is yes.  We exhibit this in the following proposition.

\begin{proposition}\label{PROPLphiCardinal}
If $\phi$ satisfies \emph{(A1)} and \emph{(A2)}, then $L_\phi = \frac{1}{\sqrt{2\pi}}\int_\R\widehat{L_\phi}(\xi)e^{i\xi \cdot}d\xi$ is a cardinal function, and moreover, $L_\phi$ satisfies \emph{(A1)} and \emph{(A2)}.
\end{proposition}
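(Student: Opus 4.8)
The plan is to reduce everything to two–sided bounds on the periodization $\Phi(\xi):=\sum_{j\in\Z}\widehat{\phi}(\xi+2\pi j)$, which is the $2\pi$–periodic function appearing in the denominator of \eqref{EQCardinalDef}. The enabling observation is that (A1) (positive definiteness of $\phi$) forces $\widehat\phi\geq0$ by Bochner's theorem, so every summand defining $\Phi$ is non-negative. First I would note that since $\widehat\phi\in W(L_\infty,\ell_1)$ by (A2), the series defining $\Phi$ converges absolutely in $L_\infty(\T)$, so $\Phi$ is a well-defined, bounded, $2\pi$–periodic function with $\|\Phi\|_{L_\infty(\T)}\leq\|\widehat\phi\|_{W(L_\infty,\ell_1)}$. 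For the lower bound, fix $\xi\in\T$; the $j=0$ term equals $\widehat\phi(\xi)\geq\delta_\phi$, while the remaining terms are non-negative, so $\Phi(\xi)\geq\delta_\phi$, and by periodicity this persists on all of $\R$:
\[
\delta_\phi\leq\Phi(\xi)\leq\|\widehat\phi\|_{W(L_\infty,\ell_1)},\qquad \xi\in\R.
\]
I regard establishing this sandwich — in particular the positivity of $\Phi$, which is precisely where (A1) is indispensable — as the crux of the proof; without non-negativity of $\widehat\phi$ the quotient in \eqref{EQCardinalDef} could be ill-defined.

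Granting the bounds, $\widehat{L_\phi}=\pifactor\,\widehat\phi/\Phi$ is well-defined and non-negative, and since $\Phi(\xi)\geq\widehat\phi(\xi)$ pointwise (the relevant coset term alone dominates the sum) one has $\widehat{L_\phi}\leq\pifactor$, so $\widehat{L_\phi}\in L_\infty$. To place $\widehat{L_\phi}$ in the amalgam space I would estimate, using $\Phi\geq\delta_\phi$ and the periodicity of $\Phi$,
\[
\sum_{k\in\Z}\|\widehat{L_\phi}(\cdot+2\pi k)\|_{L_\infty(\T)}\leq\pifactor\,\delta_\phi^{-1}\sum_{k\in\Z}\|\widehat\phi(\cdot+2\pi k)\|_{L_\infty(\T)}=\pifactor\,\delta_\phi^{-1}\|\widehat\phi\|_{W(L_\infty,\ell_1)}<\infty,
\]
so $\widehat{L_\phi}\in W(L_\infty,\ell_1)$. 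By Proposition \ref{PROPAmalgamBasic}\textit{(iv)} this gives $\widehat{L_\phi}\in L_1\cap L_2$; Plancherel then yields $L_\phi\in L_2$, and membership in $L_1$ makes $L_\phi$ continuous with the inversion formula valid. Since $\widehat{L_\phi}\geq0$ and is integrable, Bochner's theorem shows $L_\phi$ is positive definite, so (A1) holds for $L_\phi$. Membership $\widehat{L_\phi}\in L_1$ also justifies the interchange of summation and integration in the periodization computation displayed before the proposition, confirming $L_\phi(k)=\delta_{0,k}$, i.e. that $L_\phi$ is a cardinal function.

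It remains to verify the quantitative part of (A2), namely $C_{L_\phi}<\infty$. The amalgam estimate above already shows $\widehat{L_\phi}\in W(L_\infty,\ell_1)$, so I need only bound $\delta_{L_\phi}:=\inf_{\xi\in\T}|\widehat{L_\phi}(\xi)|$ away from zero. On $\T$ the numerator satisfies $\widehat\phi\geq\delta_\phi$ while $\Phi\leq\|\widehat\phi\|_{W(L_\infty,\ell_1)}$, so
\[
\delta_{L_\phi}\geq\pifactor\,\frac{\delta_\phi}{\|\widehat\phi\|_{W(L_\infty,\ell_1)}}>0.
\]
Combining this with $\sum_{k\neq0}\|\widehat{L_\phi}(\cdot+2\pi k)\|_{L_\infty(\T)}\leq\pifactor\,\delta_\phi^{-1}(C_\phi\delta_\phi)=\pifactor\,C_\phi$ yields the explicit bound $C_{L_\phi}\leq C_\phi\|\widehat\phi\|_{W(L_\infty,\ell_1)}/\delta_\phi<\infty$, completing (A2). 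Every estimate here is of the periodization type already used repeatedly in the paper, so once the two–sided control of $\Phi$ is in place the verification is routine.
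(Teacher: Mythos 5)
Your proof is correct and follows essentially the same route as the paper's: both arguments rest on the two-sided bound $\delta_\phi\leq\sum_{j\in\Z}\widehat\phi(\cdot+2\pi j)\leq\|\widehat\phi\|_{W(L_\infty,\ell_1)}$ (made explicit in your $\Phi$), use Bochner's theorem with the nonnegativity and integrability of $\widehat{L_\phi}$ for (A1), justify the computation $L_\phi(k)=\delta_{0,k}$ by integrability/nonnegativity, and derive the same amalgam estimate and lower bound $\delta_{L_\phi}\geq\delta_\phi/\|\widehat\phi\|_{W(L_\infty,\ell_1)}$, so that your constant $C_{L_\phi}\leq C_\phi\|\widehat\phi\|_{W(L_\infty,\ell_1)}/\delta_\phi$ coincides with the paper's bound $C_\phi\bigl(\delta_\phi^{-1}\|\widehat\phi\|_{L_\infty(\T)}+C_\phi\bigr)$ after expanding the amalgam norm.
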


\begin{proof}
{%\color{red}
Since (A1) and (A2) hold for $\phi$, we have $ \widehat\phi(\xi)\geq 0$ on $\mathbb{R}$ and $\delta_\phi>0$, thus Bochner's theorem and a routine periodization argument show that (A1) holds for $L_\phi$ as well. Additionally, since $\widehat{\phi}$ is nonnegative, the calculation above evaluating $L_\phi(k)$ is valid by the monotone convergence theorem, and so $L_\phi$ given by the Fourier inversion formula is a cardinal function provided we have $\widehat{L_\phi}\in L_1\cap L_2$, which follows from (A1).

For (A2), we have $$\sum_{j\in\Z}\|\widehat{L_\phi}(\cdot+2\pi j)\|_{L_\infty(\T)}\leq\delta_\phi^{-1}\dzsum{j}\|\widehat{\phi}(\cdot+2\pi j)\|_{L_\infty(\T)} = \delta_\phi^{-1}\|\widehat\phi\|_{W}<\infty,$$ which implies that $\widehat{L_\phi}\in W(L_\infty,\ell_1)$.  Now we can calculate $C_{L_\phi}$ by noting $\widehat{L_\phi}\geq\dfrac{\delta_\phi}{\|\widehat\phi\|_{W}}$ on $\T$, which leaves us with
$$C_{L_\phi}\leq C_\phi\left(\delta_\phi^{-1}\|\widehat\phi(\xi)\|_{L_\infty(\T)}+C_\phi  \right)<\infty. $$

}

\begin{comment}
First, note that since $\widehat\phi\in W(L_\infty,\ell_1)$, $\widehat{\phi}\in L_1\cap L_2$ by Proposition \ref{PROPAmalgamBasic}\textit{(4)}.  

Secondly, notice that (A2) is clear since $\widehat{L_\phi}\geq0$, and on $\T$, we have $\widehat{L_\phi}\geq\dfrac{\delta_\phi}{\|\widehat\phi\|_{W}}$.  To check (A1), first notice that for any $\xi\in\T$, $\sum_{k\in\Z}\widehat{\phi}(\xi+2\pi k)\geq\widehat{\phi}(\xi)\geq\delta_\phi$.  Since this series is periodic, the bound holds for all $\xi\in\R$.  Thus,
$$\sum_{j\in\Z}\|\widehat{L_\phi}(\cdot+2\pi j)\|_{L_\infty(\T)}\leq\delta_\phi^{-1}\dzsum{j}\|\widehat{\phi}(\cdot+2\pi j)\|_{L_\infty(\T)} = \delta_\phi^{-1}\|\widehat\phi\|_{W},$$ which yields the conclusion of (A2).
\end{comment}
\end{proof}
Note that Proposition \ref{PROPLphiCardinal} implies that $V(L_\phi)$ is a well-defined shift-invariant space.

\subsection{On Interpolation via Cardinal Functions}

Given cardinal functions constructed previously, we now turn to their interpolation properties.  Proposition \ref{PROPLphiCardinal} together with Theorem \ref{thm_interp_1} implies that interpolation of functions in $V(\psi)$ via interpolants in $V(L_\phi)$ is possible.  We now enumerate some of the consequences of this fact beginning with the following lemma.  For ease of notation in this section, we write $I_\phi$ for $I_\phi^\Z$, representing the interpolation operator from $V(\phi)\to V(\psi)$. 

\begin{lemma}\label{LEMIpsi}
Let $I_\psi$ be the interpolation operator associated with the generator $\psi$.  Then if $f\in V(\psi)$, $f=I_\psi f$.
\end{lemma}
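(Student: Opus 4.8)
The plan is to read the identity off directly from the uniqueness already established in Theorem \ref{thm_interp_1}. Recall that in this section $\X=\Y=\Z$, so $I_\psi=I_\psi^\Z$ is the operator sending $f\in V(\psi)=V(\psi,\Z)$ to the unique interpolant in $V(\psi)$ agreeing with $f$ at the integers; this is well defined because pointwise evaluation makes sense on $V(\psi)$ by Proposition \ref{lem_psi_1}\textit{(iii)}. The whole content of the lemma is that this operator fixes the very space from which it is constructed, so the argument should be short.

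First I would use Proposition \ref{lem_psi_1}\textit{(i)} to write $f=\sum_{j\in\Z}c_j\psi(\cdot-j)$ for a (unique) coefficient sequence $(c_j)\in\ell_2$, exploiting that the integer translates of $\psi$ form a Riesz basis for $V(\psi)$. Next I would check that $f$ interpolates its own samples: evaluating the series at an integer $k$ gives $f(k)=\sum_{j\in\Z}c_j\psi(k-j)$, so the sequence $(c_j)$ solves the moment equations $\sum_{j}a_j\psi(k-j)=f(k)$ for every $k\in\Z$. The convergence of these sums is legitimate since $\widehat\psi\in W(L_\infty,\ell_1)$ forces $(\psi(k-j))_{j,k}$ to define a bounded operator on $\ell_2$, a fact already invoked in the proof of Theorem \ref{THMNormEquivalence}; moreover $f\in C_0\cap L_2(\R)$ by Proposition \ref{lem_psi_1}\textit{(iii)}.

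Finally I would apply the uniqueness clause of Theorem \ref{thm_interp_1} with $\phi=\psi$ and $\Y=\X=\Z$: there is exactly one sequence in $\ell_2$ producing a function in $V(\psi)$ that both interpolates $f$ at $\Z$ and lies in $C_0\cap L_2$. Since $(c_j)$ gives such a function (namely $f$ itself), and $I_\psi f$ arises from the unique such sequence, uniqueness forces the two sequences to coincide, whence $I_\psi f=\sum_{j\in\Z}c_j\psi(\cdot-j)=f$.

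I do not expect a genuine obstacle: the statement is essentially a tautological consequence of uniqueness of the interpolant. The only point deserving a moment's care is confirming that the Riesz-basis expansion of $f$ genuinely qualifies as a solution of the moment problem at $\Z$ (that is, that $f$ interpolates itself and satisfies the regularity condition \textit{(ii)} of Theorem \ref{thm_interp_1}), so that the uniqueness statement may legitimately be brought to bear and the two candidate interpolants identified.
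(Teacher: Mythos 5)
Your proof is correct and follows essentially the same route as the paper's: both identify $f$ as its own interpolant and invoke the uniqueness clause of Theorem \ref{thm_interp_1} with $\phi=\psi$ and $\Y=\Z$ to conclude $I_\psi f=f$. Your version merely carries out the argument at the level of the coefficient sequence $(c_j)$ rather than directly at the level of functions in $V(\psi)$, which adds a bit of detail (the pointwise validity of the moment equations) that the paper dispatches with the word ``evidently.''
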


\begin{proof}
Note that by \eqref{eq_interp_1} and Theorem \ref{thm_interp_1}\textit{(i)}, $I_\psi f\in V(\psi)$.  Moreover, $I_\psi f$ is the unique function in $V(\psi)$ such that $I_\psi f(k)=f(k)$.  However, evidently $f\in V(\psi)$ satisfies this relation as well; consequently $I_\psi f=f$.
\end{proof}

This lemma leads us to the following proposition.

\begin{proposition}\label{PROPCardinalConvergence}
If $(\phi_\alpha)$ satisfies \emph{(B1)--(B4)}, then $L_{\phi_\alpha}\to L_\psi$ both uniformly and in $L_2(\R)$ as $\alpha\to\infty$.
\end{proposition}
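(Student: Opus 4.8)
The plan is to recognize that the cardinal function $L_{\phi_\alpha}$ is nothing other than the interpolant of the \emph{fixed} function $L_\psi$ from the space $V(\phi_\alpha)$, and then to apply the recovery theorem to this single function. Since $\X=\Y=\Z$ throughout this section and $\Z$ is a CIS for $PW_\pi$, Theorems \ref{thm_interp_1} and \ref{thm_recovery_1} and Corollary \ref{cor_recovery_4} all apply with $f=L_\psi$, provided we can verify that $L_\psi\in V(\psi)$.

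First I would check that $L_\psi\in V(\psi)$. Using the construction \eqref{EQCardinalDef}, write $\widehat{L_\psi}=\widehat\psi\cdot Q$, where $Q:=\frac{1}{\sqrt{2\pi}}\bigl(\sum_{j\in\Z}\widehat\psi(\cdot+2\pi j)\bigr)^{-1}$ is $2\pi$-periodic. By (A1) and Bochner's theorem $\widehat\psi\geq0$, and the $j=0$ term gives $\widehat\psi(\xi)\geq\delta_\psi$ on $\T$, so the denominator is bounded below by $\delta_\psi>0$ on $\T$ and hence, by periodicity, everywhere. Thus $Q$ is a bounded $2\pi$-periodic function, in particular $Q\in L_2(\T)$. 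Since $\Z$ is a CIS for $PW_\pi$, the characterization $\F V(\psi)=\{\widehat\psi\,q:q\in L_2(\T)\}$ (as in the proof of Proposition \ref{PROPSuppPsi}\textit{(i)}) shows $L_\psi\in V(\psi)$. The same reasoning shows $L_{\phi_\alpha}\in V(\phi_\alpha)$ for each $\alpha$.

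The key step is the identity $I_{\phi_\alpha}^\Z L_\psi=L_{\phi_\alpha}$. By Proposition \ref{PROPLphiCardinal}, $L_{\phi_\alpha}$ is a cardinal function, so $L_{\phi_\alpha}(k)=\delta_{0,k}=L_\psi(k)$ for every $k\in\Z$, and we have just seen $L_{\phi_\alpha}\in V(\phi_\alpha)$. On the other hand, Theorem \ref{thm_interp_1} guarantees that $I_{\phi_\alpha}^\Z L_\psi$ is the \emph{unique} element of $V(\phi_\alpha)$ whose values on $\Z$ agree with those of $L_\psi$. Since $L_{\phi_\alpha}$ is such an element, uniqueness forces $I_{\phi_\alpha}^\Z L_\psi=L_{\phi_\alpha}$.

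With this identity in hand the conclusion is immediate: applying Theorem \ref{thm_recovery_1} to $f=L_\psi\in V(\psi)$ gives $\|L_\psi-I_{\phi_\alpha}^\Z L_\psi\|_{L_2(\R)}\to0$, and Corollary \ref{cor_recovery_4} gives $\|L_\psi-I_{\phi_\alpha}^\Z L_\psi\|_{L_\infty(\R)}\to0$; substituting $I_{\phi_\alpha}^\Z L_\psi=L_{\phi_\alpha}$ yields $L_{\phi_\alpha}\to L_\psi$ both in $L_2(\R)$ and uniformly. I expect no serious difficulty here: the entire content of the argument is the observation that $L_{\phi_\alpha}=I_{\phi_\alpha}^\Z L_\psi$, after which the result is just the recovery theorem specialized to the single function $L_\psi$. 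The only step requiring genuine care is confirming $L_\psi\in V(\psi)$, i.e.\ that the periodic factor $Q$ lies in $L_2(\T)$, which the lower bound $\delta_\psi$ on the denominator supplies.
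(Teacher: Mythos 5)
Your proposal is correct and follows essentially the same route as the paper: the paper likewise identifies $L_{\phi_\alpha}$ as the unique interpolant $I_{\phi_\alpha}^{\Z}f$ of the function $f\in V(\psi)$ with data $f(j)=\delta_{0,j}$ (which equals $L_\psi$ by Lemma \ref{LEMIpsi}), and then applies Theorem \ref{thm_recovery_1} and Corollary \ref{cor_recovery_4}. Your direct verification that $L_\psi\in V(\psi)$ via the periodic factor $Q$ merely replaces the paper's appeal to Theorem \ref{THMShiftSpaceEquality} and Lemma \ref{LEMIpsi} with the same underlying computation.
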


\begin{proof}
First, note that via Theorem \ref{thm_interp_1} (or Lemma 2 of \cite{L1}), there exists a function $f\in V(\psi)$ such that $f(j)=\delta_{0,j}$.  For this $f$, $I_{\phi_\alpha} f = L_{\phi_\alpha}$, and $f=L_\psi$ by Lemma \ref{LEMIpsi}.  Thus, an application of the conclusion of Theorem \ref{thm_recovery_1} demonstrates that
$$\inflim{\alpha}I_{\phi_\alpha} f = \inflim{\alpha}L_{\phi_\alpha} = f = L_\psi$$ uniformly and in $L_2$.
\end{proof}

Another, perhaps more important question, involves the form of the interpolant $I_\phi f$ to a given $f\in V(\psi)$.  Theorem \ref{thm_interp_1} shows that $I_\phi f = \sum_{j\in\Z}a_j\phi(\cdot-j)$ is the unique element of $V(\phi)$ that interpolates $f$ at the integer lattice.  However, the definition of the cardinal function implies that the following function interpolates $f$ at the integers:
$$\widetilde{I_\phi f}(x):=\dzsum{j}f(j)L_\phi(x-j).$$
Moreover, Lemma \ref{lem_psi_2} implies that $\widetilde{I_\phi f}\in V(L_\phi)$, on account of the interpolatory condition, is the unique element of $V(L_\phi)$ that interpolates $f$ at $\Z$.  %Naturally, if $V(\phi)=V(L_\phi)$, then $I_\phi f=\widetilde{I_\phi f}$ due to the uniqueness of the interpolants formed from each space.  
The following theorem is a consequence of the characterization of principal shift-invariant subspaces of $L_2$ in \cite{DeBoorDeVoreRon}, and implies that indeed $\widetilde{I_\phi f} = I_\phi f$.  For completeness we give the proof here.

\begin{theorem}\label{THMShiftSpaceEquality}
If $\phi$ satisfies \emph{(A1)} and \emph{(A2)}, then $V(\phi)=V(L_\phi)$.
\end{theorem}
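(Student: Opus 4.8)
The plan is to work entirely on the Fourier side and reduce the equality of spaces to the equality of two sets of products with a single periodic multiplier. Since the Fourier transform is an isometric isomorphism of $L_2(\R)$, it suffices to show $\F V(\phi)=\F V(L_\phi)$. Because both $\phi$ and $L_\phi$ satisfy (A1) and (A2) (the latter by Proposition \ref{PROPLphiCardinal}), Proposition \ref{lem_psi_1}\textit{(i)} applied with $\X=\Z$ shows their integer translates form Riesz bases for the respective spaces. Hence every element of $V(\phi)$ has the form $\sum_j c_j\phi(\cdot-j)$ with $(c_j)\in\ell_2$, and taking Fourier transforms yields the characterizations
$$\F V(\phi)=\{\widehat\phi\,m : m\in L_2(\T)\},\qquad \F V(L_\phi)=\{\widehat{L_\phi}\,m : m\in L_2(\T)\},$$
where $m=\sum_j c_j e^{-ij\cdot}$ ranges over all of $L_2(\T)$; the Riesz basis inequality is precisely what guarantees that these products lie in $L_2(\R)$.

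Next I would analyze the periodization $P(\xi):=\sum_{j\in\Z}\widehat\phi(\xi+2\pi j)$ appearing in the denominator of \eqref{EQCardinalDef}. By (A1) and Bochner's theorem, $\widehat\phi\geq0$, so $P$ is a nonnegative $2\pi$-periodic function, and two-sided control of $P$ is the crux of the argument. Since $P(\xi)\geq\widehat\phi(\xi)\geq\delta_\phi$ on $\T$ and $P$ is $2\pi$-periodic, we obtain $P\geq\delta_\phi$ everywhere; and $P(\xi)\leq\sum_j\|\widehat\phi(\cdot+2\pi j)\|_{L_\infty(\T)}=\|\widehat\phi\|_{W(L_\infty,\ell_1)}$ by (A2). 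Consequently $\tfrac{1}{\sqrt{2\pi}P}$ is a $2\pi$-periodic function obeying
$$\frac{1}{\sqrt{2\pi}\,\|\widehat\phi\|_{W(L_\infty,\ell_1)}}\leq\frac{1}{\sqrt{2\pi}\,P(\xi)}\leq\frac{1}{\sqrt{2\pi}\,\delta_\phi},$$
so it is bounded above and bounded away from zero.

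The final step uses the definition $\widehat{L_\phi}=\widehat\phi\cdot\tfrac{1}{\sqrt{2\pi}P}$ to relate the two generators through an invertible periodic multiplier. Because $\tfrac{1}{\sqrt{2\pi}P}$ is bounded and bounded below, multiplication by it is a bounded operator on $L_2(\T)$ with bounded inverse (multiplication by $\sqrt{2\pi}P$), hence a bijection of $L_2(\T)$ onto itself. Therefore
$$\F V(L_\phi)=\left\{\widehat\phi\cdot\tfrac{1}{\sqrt{2\pi}P}\,m : m\in L_2(\T)\right\}=\{\widehat\phi\,m' : m'\in L_2(\T)\}=\F V(\phi),$$
and applying the inverse Fourier transform gives $V(\phi)=V(L_\phi)$. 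I expect the only genuine subtlety to be the two-sided estimate on $P$: once that is secured, the invertibility of the multiplier and the matching of the two ranges is immediate, with nonnegativity of $\widehat\phi$ (via Bochner) supplying the lower bound and (A2) supplying the upper bound.
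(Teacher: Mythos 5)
Your proof is correct and follows essentially the same route as the paper: reduce to the Fourier side, characterize both spaces as $\widehat{\phi}$ (resp.\ $\widehat{L_\phi}$) times $2\pi$--periodic $L_2(\T)$ multipliers, and observe that the periodization $\sigma(\xi)=\sum_{j\in\Z}\widehat{\phi}(\xi+2\pi j)$ is bounded above and below, so the two ranges coincide. Your version is in fact slightly more careful than the paper's, which abbreviates the two-sided bounds on $\sigma$ and the Riesz-basis justification of the multiplier characterization that you spell out explicitly.
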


\begin{proof}
Again, it suffices to show that $\F V(\phi)=\F V(L_\phi)$.  By definition, $\F V(\phi) = \{\sum_{j\in\Z}c_je^{-ij(\cdot)}\widehat{\phi}:(c_j)\in\ell_2\}.$  However, we may equivalently write the space as $\{Q\widehat{\phi}:Q|_{\T}\in L_2(\T),  Q \textnormal{ is } 2\pi\textnormal{--periodic}\}.$  
The proof may be concluded by simply noticing the $\widehat{L_\phi} = \widehat\phi\sigma$ where $\sigma(\xi)=\sum_{j\in\Z}\widehat\phi(\xi+2\pi j)$ is a continuous, $2\pi$--periodic function which is bounded above and below on $\T$.  Consequently, $Q\widehat\phi = (Q/\sigma)\widehat{L_\phi}$ with $Q/\sigma$ a $2\pi$--periodic $L_2(\T)$ function, whence $\F V(\phi) = \F V(L_\phi)$.
\end{proof}

Consequent upon Theorem \ref{THMShiftSpaceEquality}, the unique interpolant of $f$ from the shift-invariant space $V(\phi)$ takes the forms
$$I_\phi f(x)=\sum_{j\in\Z}a_j\phi(x-j)=\sum_{j\in\Z}f(j)L_\phi(x-j).$$

As promised, this section concludes with the counterexample to recovery whenever $\Y\neq\X$.
\begin{proof}[Proof of Proposition \ref{PROPYnotX}]
Let $\Y=\Z$ and $\X=\Z\setminus\{1\}\cup\{\sqrt{2}\}$ ($\Z$ is obviously a CIS, and the perturbation of only finitely many points in a CIS yields another CIS provided the resulting points are pairwise distinct).  Additionally, let $\psi$ be the Gaussian kernel $e^{-|x|^2}$, so that $\supp(\widehat\psi)=\R$ and $\widehat\psi>0$ on $\R$.  

By way of contradiction, suppose that there was a family of generators which satisfy (B1) such that for every $f\in V(\psi,\X)$, we have $\underset{\alpha\to\infty}\lim\;I_{\phi_\alpha}^\Z f= f$ in $L_2$ and uniformly, where $I_{\phi_\alpha}^\Z f\in V(\phi_\alpha,\Z)$.

First, notice that by Theorem \ref{THMShiftSpaceEquality}, $V(\phi_\alpha,\Z) = V(L_{\phi_\alpha},\Z)$.  Therefore, for every $f\in V(\psi,\X)$, we have
$$I_{\phi_\alpha}^\Z f = I^\Z_{L_{\phi_\alpha}}f = \sum_{j\in\Z}f(j)L_{\phi_\alpha}(\cdot-j)$$
via the uniqueness of the interpolant.  Consider first that 
$$\widehat{I^\Z_{L_{\phi_\alpha}}\psi} = \sum_{j\in\Z}\psi(j)e^{-ij\xi}\widehat{L_{\phi_\alpha}}(\xi).$$  By the Poisson Summation Formula (which clearly holds for the Gaussian) this is $$\sum_{k\in\Z}\widehat\psi(\xi+2\pi k)\widehat{L_{\phi_\alpha}}(\xi)=:\sigma_\psi(\xi)\widehat{L_{\phi_\alpha}}(\xi).$$
Thus $\widehat{I^\Z_{L_{\phi_\alpha}}\psi} = \sigma_\psi\widehat{L_{\phi_\alpha}}\to\widehat\psi$ in $L_2$, which implies that $\widehat{L_{\phi_\alpha}}\to\frac{\widehat\psi}{\sigma_\psi} = \widehat{L_\psi}$ in $L_2$ (since $\sigma_\psi$ is a $2\pi$--periodic function that is bounded above and below by positive constants for every $\xi\in\R$).

Therefore, $L_{\phi_\alpha}\to L_\psi$ in $L_2(\R)$.  This implies that if $f_1 = \psi(\cdot-\sqrt{2})$, which is in $V(\psi,\X)$, we have $$I^\Z_{L_{\phi_\alpha}}f_1\to\sum_{j\in\Z}f_1(j)L_\psi(\cdot-j)$$
in $L_2$ (this follows again by using the Poisson Summation Formula on $f_1$, which is evidently valid, and the fact that $\sum_{k\in\Z}|\widehat{f_1}(\xi+2\pi k)|\leq C$ for $\xi\in\T$).  But the right hand side above is in $V(L_\psi,\Z)=V(\psi,\Z)$.  On the other hand, by assumption, $I^\Z_{L_{\phi_\alpha}}f_1=I^\Z_{\phi_\alpha}f_1\to f_1$ which is in $V(\psi,\X)\setminus V(\psi,\Z)$, which yields a contradiction.
\end{proof}

%%%%%%%%%%%%%%%%%%%%%
%%%%%%%%%%%%%%%%%%%%%
\subsection{Extensions for Cardinal Interpolation}
%%%%%%%%%%%%%%%%%%%%%
%%%%%%%%%%%%%%%%%%%%%

One of the more interesting utilities of cardinal functions defined as in \eqref{EQCardinalDef} is that they may still be well-defined even when the generator $\phi$ grows.  For example, if $\phi(x):=\sqrt{x^2+c^2}$, which is the traditional Hardy multiquadric \cite{Hardy} then the cardinal function $L_\phi$ is well-defined because $\widehat{\phi}$ may be identified with a function which has an algebraic singularity at the origin and decays exponentially away from the origin \cite{Jones}, thus allowing the right-hand side of \eqref{EQCardinalDef} to be defined for every $\xi\in\R\setminus\{0\}$.  Yet in this case, $V(L_\phi,\Z)\neq V(\phi,\Z)$ because the space associated with $\phi$ is not well-defined because of the growth of the generator.  However, the decay of $L_\phi$ and its Fourier transform is such that its principal shift-invariant space $V(L_\phi)$ is indeed well-defined \cite{Buhmann, Buhmann_Book}.  

Additionally, there do exist families of cardinal functions which satisfy condition (B1); as a canonical example, we consider $(L_{\phi_c})_{c\in[1,\infty)}$, the cardinal functions associated with the Hardy multiquadric mentioned above indexed by the shape parameter $c$.  Suppose for simplicity that $\supp(\widehat{\psi})=\T$.  Then the fact that $(L_{\phi_c})$ satisfies (A1), (A2), and (B2) may be surmised from \cite{RS}, while (B3) is vacuous based on the support of $\widehat\psi$.  Finally, (B4) follows from Proposition 2.2 of \cite{Baxter}.  Thus there are examples of cardinal functions which exhibit convergence by satisfying these conditions despite the fact that the generators they are formed from manifestly do not.  So while often the spaces $V(\phi,\Z)$ and $V(L_\phi,\Z)$ coincide, there is sometimes additional flexibility when using cardinal functions.  

In \cite{Ledford_Cardinal}, sufficient conditions on a family of multivariate generators $(\phi_\alpha)$ were given such that cardinal interpolation from the space $V(L_{\phi_\alpha},\Z^d)$ (defined in the obvious manner for $\Z^d$) is well-defined, and moreover, the interpolants of a bandlimited function converge to that function both in $L_2$ and uniformly on $\R^d$ as $\alpha\to\infty$.

%%%%%%%%%%%%%%%%%%%%%%%%%%%%%%%%%%%%%%%%%%%%%%%%%%
%%%%%%%%%%%%%%%%%%%%%%%%%%%%%%%%%%%%%%%%%%%%%%%%%%
\section{On Inverse Theorems}
%%%%%%%%%%%%%%%%%%%%%%%%%%%%%%%%%%%%%%%%%%%%%%%%%%
%%%%%%%%%%%%%%%%%%%%%%%%%%%%%%%%%%%%%%%%%%%%%%%%%%

The conclusion of our analysis features a discussion of inverse theorems with respect to the generators, or rather lack thereof.  Indeed, it is an interesting question whether convergence of interpolants $I_{\phi_\alpha}^\Y f \to f$ for every $f\in V(\psi,\X)$ implies that in some manner $\phi_\alpha\to\psi$.  In all cases described above, the answer to this question is negative.

The first case to consider is when $\supp(\widehat\psi)=\T$.  In this case, the fact that $I_{\phi_\alpha}^\Y f\to f$ for every $f\in V(\psi,\X)$ does not imply that $\phi_\alpha\to\psi$.  Essentially all regular interpolators of \cite{L1} are counterexamples; in particular, $\phi_\alpha:=e^{-\frac{|\cdot|^2}{\alpha}}$ provides recovery in $V(\sinc,\Z)=PW_\pi$ as $\alpha\to\infty$, but clearly $\phi_\alpha\nrightarrow\sinc$ in any classical (e.g. pointwise, $L_p$, etc.) manner.

For more general $\psi$, consider interpolation of functions in $V(\psi,\Z)$ via $V(\phi_\alpha,\Z)$.  Note that if $I_{\phi_\alpha}^\Z f\to f$, then Theorem \ref{THMShiftSpaceEquality} and uniqueness of the interpolant (Theorem \ref{thm_interp_1}) implies that $I_{L_{\phi_\alpha}}^\Z f\to f$ for all $f$ as well.  But Proposition \ref{PROPCardinalConvergence} implies that $L_{\phi_\alpha}\to L_\psi$, whereas there are many generators for which $\psi\neq L_\psi$ (for example, if $\psi(x)=e^{-|x|^2}$, then $L_\psi\neq\psi$, a fact that can be checked via \eqref{EQCardinalDef}), which means that we cannot also have $L_{\phi_\alpha}\to\psi$ in this case .

%%%%%%%%%%%%%%%%%%%%%%%%%%%%%%%%%%%%%%%%%%%%%%
%%%%%%%%%%%%%%%%%%%%%%%%%%%%%%%%%%%%%%%%%%%%%%
%%%%%       Remarks                 %%%%%%%%%%
%%%%%%%%%%%%%%%%%%%%%%%%%%%%%%%%%%%%%%%%%%%%%%
%%%%%%%%%%%%%%%%%%%%%%%%%%%%%%%%%%%%%%%%%%%%%%

\section{Remarks}

While the above analysis thoroughly explores Problem \ref{PROB} in the $L_2$ quasi shift-invariant space, it is natural to consider what happens in the $L_p$ setting for general $p$. Some things carry over in the uniform setting; for instance, under conditions (A1) and (A2), the systems $\{\phi(\cdot-j):j\in\Z\}$ and $\{L_\phi(\cdot-j):j\in\Z\}$ are unconditional bases for their span in $L_p$, which we denote $V_p(\phi,\Z)$ and $V_p(L_\phi,\Z)$, respectively, and moreover they are closed subspaces of $L_p$ \cite{AG}.  Under the additional assumption that the symbol $\sigma(\xi):=\sum_{j\in\Z}\widehat\phi(\xi+2\pi k)$ is in the Wiener algebra $A(\T)$ of $2\pi$--periodic functions with absolutely summable Fourier coefficients, we also have that $V_p(\phi,\Z)=V_p(L_\phi,\Z)$ for $p\in[1,2]$. The proof follows from the representation of the symbol as $\sigma(\xi)=\sum_{j\in\Z}d_je^{-ij\xi}$ with $(d_j)\in\ell_1$ and elementary norm inequalities.

Additionally, the results here involving cardinal functions extend easily to higher dimensions in the case $\X=\Z^d$.  However, for more general $\X\subset\R^d$, the methods here do not extend readily, predominantly due to the fact that Riesz bases of exponentials are difficult to come by in higher dimensions even for straightforward domains (e.g. it is an open problem whether or not a Riesz basis of exponentials exists for the Euclidean ball in $\R^d$ for any $d\geq2$).  For recent results on existence of Riesz bases, consult \cite{grepstad_lev,kol,nitzan}. For some interpolation and recovery schemes similar to those here in higher dimensions, see \cite{BSS,Hamm_Zonotopes,Ledford_Bivariate}.  Naturally, \cite{GS} and others consider quasi shift-invariant spaces for more general point-sets $\X$, which would eliminate this difficulty, but for the interpolation method examined here, the techniques of proof do not extend to sets $\X$ which are merely quasi-uniform, though that is not to say that no such method is feasible.

\section*{Acknowledgments}
The first author thanks Akram Aldroubi, Alex Powell, and Ben Hayes for many fruitful discussions involving this work.  The authors also take pleasure in thanking the anonymous referee for their valuable suggestions which greatly improved this article.

%%%%%%%%%%%%%%%%%%%%%%%%%%%%
%%%%%%%%%%%%%%%%%%%%%%%%%%%%
%%%%%%%%%%%%%%%%%%%%%%%%%%%%

%%%%%%%%%%%%%%%%%%%%%%%%%%%
%%%%%%%%%%%%%%%%%%%%%%%%%%%
%%%%%%%%%%%%%%%%%%%%%%%%%%%

\end{document}